\documentclass[11pt]{amsart}

% for icelandic characters
\usepackage[utf8]{inputenc}
\usepackage[english]{babel}
\usepackage[T1]{fontenc}

\usepackage{amsthm,amsmath,amssymb}
\usepackage{stmaryrd}
\usepackage{multirow}

% For tikz pictures
\usepackage{graphicx}
\usepackage{tikz}
\usetikzlibrary{matrix,arrows}
\usetikzlibrary{positioning}
\usetikzlibrary{fit}
\usetikzlibrary{patterns}

\usepackage[colorlinks=true,citecolor=black,linkcolor=black,urlcolor=blue]{hyperref}

%%
%% defined commands
%%
\newcommand{\dbrac}[1]{{\llbracket #1 \rrbracket}} 	% double brackets
\newcommand{\sep}{{\colon}}                         % Separator
\newcommand{\boks}[2]{({#1, #2})}   % A box with the old
                                                    % notation
\newcommand{\id}{\mathrm{id}}

\DeclareMathOperator{\des}{des}

\newcommand{\pattern}[4]{										% mesh pattern
  \raisebox{0.6ex}{
  \begin{tikzpicture}[scale=0.35, baseline=(current bounding box.center), #1]
    \foreach \x/\y in {#4}
      \fill[pattern=north east lines] (\x,\y) rectangle +(1,1);
    \draw (0.01,0.01) grid (#2+0.99,#2+0.99);
    \foreach \x/\y in {#3}
      \filldraw (\x,\y) circle (6pt);
  \end{tikzpicture}}
}

% Marked mesh patterns

% declare theorem-like environments
\theoremstyle{plain}
\newtheorem{theorem}{Theorem}
\newtheorem{lemma}[theorem]{Lemma}

\newtheorem{proposition}[theorem]{Proposition}

\newtheorem{observation}[theorem]{Observation}

\theoremstyle{definition}
\newtheorem{definition}[theorem]{Definition}
\newtheorem{example}[theorem]{Example}
\newtheorem{conjecture}[theorem]{Conjecture}

\theoremstyle{remark}

%%
%% beginning of document
%%

\title[Wilf-classification of mesh patterns of short length]{\bf Wilf-classification\\ of mesh patterns of short length}

\author[Hilmarsson,
        Jónsdóttir,
        Sigurðardóttir,
        Viðarsdóttir,
        Ulfarsson]{
        Ísak Hilmarsson,
        Ingibjörg Jónsdóttir,
        Steinunn Sigurðardóttir,
        Lína Viðarsdóttir and
        Henning Ulfarsson}
\address{School of Computer Science, Reykjavik University,
Menntavegi 1, \newline 101 \mbox{Reykjavík}, Iceland}
\email{henningu@ru.is \and
       ingibjorgj08@ru.is \and
       isakh08@ru.is \and
       sigridurlv08@ru.is \and
       steinunns08@ru.is}

\thanks{Research of the first four authors was partially supported by grant 090038013 from the Icelandic Research Fund. Research of the fifth author was
supported by the same grant as well as grant 141761051.}

\begin{document}

\begin{abstract} This paper starts the Wilf-classification of mesh patterns of
length $2$. Although there are initially $1024$ patterns to consider we
introduce automatic methods to reduce the number of potentially different
Wilf-classes to at most $65$. By enumerating some of the remaining classes we
bring that upper-bound further down to $56$. Finally, we conjecture that the
actual number of Wilf-classes of mesh patterns of length $2$ is $46$.

\bigskip\noindent \textbf{Keywords:} permutation patterns; Wilf-classification
\end{abstract}

\maketitle

\thispagestyle{empty}

\section{Introduction}
Let $n$ be a non-negative integer. A \emph{permutation} is a bijection from the
set $\{1, 2, \dotsc, n \}$ to itself. The permutation that maps $i$ to $\pi_i$
will be written as the word $\pi=\pi_1\pi_2 \dotsm \pi_n$. Let $S_n$ be the set
of all permutations of length $n$.

A (\emph{classical permutation}) \emph{pattern} is a permutation $p \in S_k$.
The pattern $312 \in S_3$ can be drawn as follows, where the horizontal lines
represent the values and the vertical lines denote the positions in the pattern.
\[
312 = \pattern{scale=1}{3}{1/3,2/1,3/2}{}
\]

We say that a pattern $p$ \emph{occurs} in a permutation $\pi \in S_n$ if there
is a subsequence of $\pi$ whose letters are in the same relative order of size
as the letters of $p$. This sequence is called an \emph{occurrence} of the
pattern $p$ in the permutation $\pi$. If a pattern occurs in a permutation we
say that the permutation \emph{contains} the pattern. For example, the
permutation $25134$ contains the pattern $312$ as the subsequence $534$. The
diagram below shows the permutation where points corresponding to the occurrence
of the pattern have been circled.
\[
25134 =
 \begin{tikzpicture}[scale=0.3, baseline=(current bounding box.center)]
    \draw (0.01,0.01) grid (5+0.99,5+0.99);
  \filldraw (1,2) circle (6pt);
  \filldraw (2,5) circle (6pt);
  \filldraw (3,1) circle (6pt);
  \filldraw (4,3) circle (6pt);
  \filldraw (5,4) circle (6pt);
  % draw three circles around the points
  \draw (2,5) circle (12pt);
  \draw (4,3) circle (12pt);
  \draw (5,4) circle (12pt);
 \end{tikzpicture}
\]
A permutation that does not contain a pattern is said to \emph{avoid} the
pattern. For example, a permutation $\pi \in S_n$ avoids the pattern $231$ if
there do not exist $1 \leq i < j < k \leq n$ with $\pi(k) < \pi(i) < \pi(j)$. An
example of a permutation that avoids the pattern $231$ is the permutation
$51423$.
\[
51423=
 \begin{tikzpicture}[scale=0.3, baseline=(current bounding box.center)]
    \draw (0.01,0.01) grid (5+0.99,5+0.99);
  \filldraw (1,5) circle (6pt);
  \filldraw (2,1) circle (6pt);
  \filldraw (3,4) circle (6pt);
  \filldraw (4,2) circle (6pt);
  \filldraw (5,3) circle (6pt);
 \end{tikzpicture}
\]
Given a pattern $p$ we let $S_n(p)$ be the set of permutations of length $n$
that avoid $p$.

One of the primary questions in the theory of permutation patterns is that of
Wilf-equivalence: Given two patterns $p$ and $q$, are the sizes of the sets
$S_n(p)$ and $S_n(q)$ equal for all $n$? Patterns for which the answer
is ``yes'' are called \emph{Wilf-equivalent}. A \emph{Wilf-class} is a maximal
set of patterns (necessarily of the same length) that are all Wilf-equivalent.
The process of sorting patterns into classes by Wilf-equivalence is called
\emph {Wilf-classification}. Classical patterns of length $3$ were
Wilf-classified by Knuth~\cite{MR0378456}, who showed that the number of
permutations avoiding each classical pattern of length $3$ is given by the
Catalan numbers. Permutations avoiding more than one pattern have also been
studied. Simion and Schmidt~\cite{MR829358} Wilf-classified all sets of
classical patterns of length $3$.

The purpose of this paper is to start the Wilf-classification of mesh patterns.
Mesh patterns, whose definition we review below, provide a common extension of
several previous generalizations of classical patterns. We show that the $16$
mesh patterns of length $1$ belong to $4$ different Wilf-classes. The
classification of the $1024$ mesh patterns of length $2$ would be very tedious
to to by hand without resorting to the usual $D_8$ symmetries of patterns. This
however only brings the number of patterns down to $186$. In
Lemma~\ref{lem:shading} we give a sufficient condition for when a shading can be
added to a mesh pattern. This cuts the number of patterns in half, down to $87$.
Two other operations allow us to bring that number down to $65$. We then use
conventional tools of combinatorics, like generating functions, bijective maps,
etc., to achieve the upper bound of $56$ on the number of Wilf-classes. We
conjecture that the actual number of classes is $46$.

\section{An overview of generalized patterns}
Several generalizations of classical patterns have been introduced. The first
extension relevant to us are \emph{vincular patterns}, defined by Babson and
Steingr\'imsson~\cite{MR1758852}. These patterns can require letters in a
permutation to be adjacent. For example the pattern $\pattern{scale=0.45}{ 3 }{
1/2, 2/1, 3/3 }{2/0, 2/1, 2/2, 2/3}$ requires the letters corresponding to $1$
and $3$ in a permutation to be adjacent. Graphically this means that no points
can be in the shaded area. This pattern occurs in the permutation
\[
53124 =
 \begin{tikzpicture}[scale=0.3, baseline=(current bounding box.center)]
    \draw (0.01,0.01) grid (5+0.99,5+0.99);
  \filldraw (1,5) circle (6pt);
  \filldraw (2,3) circle (6pt);
  \filldraw (3,1) circle (6pt);
  \filldraw (4,2) circle (6pt);
  \filldraw (5,4) circle (6pt);
  % draw three circles around the points
  \draw (2,3) circle (12pt);
  \draw (4,2) circle (12pt);
  \draw (5,4) circle (12pt);
 \end{tikzpicture}
\]
because $53124$ contains the classical pattern $213$ as the subsequence $324$
and because the letters in the permutation that correspond to $1$ and $3$ in
the pattern, i.e., $2$ and $4$, are adjacent in the permutation.

The permutation $52314$ avoids the pattern $\pattern{scale=0.45}{3}{1/1, 2/2,
3/3}{2/0, 2/1, 2/2, 2/3}$, since the only occurrence of the classical pattern
$123$ is the subsequence $234$ and the letters $3$ and $4$ are not adjacent in
the permutation.
\[
52314 =
 \begin{tikzpicture}[scale=0.3, baseline=(current bounding box.center)]
  \draw (0.01,0.01) grid (5+0.99,5+0.99);
  \fill[pattern=north east lines] (3,0) rectangle +(2,6);
  \filldraw (1,5) circle (6pt);
  \filldraw (2,2) circle (6pt);
  \filldraw (3,3) circle (6pt);
  \filldraw (4,1) circle (6pt);
  \filldraw (5,4) circle (6pt);
  % draw three circles around the points
  \draw (2,2) circle (12pt);
  \draw (3,3) circle (12pt);
  \draw (5,4) circle (12pt);
 \end{tikzpicture}
\]
Vincular patterns of length $3$ with one shaded column were Wilf-classified by
Claesson~\cite{MR1857258}. He showed that the number of permutations avoiding
eight of the vincular patterns is given by the Bell numbers and the remaining
four give the Catalan numbers. Several of his results are a consequence of
Lemma~\ref{lem:shading} below.
\newline

\emph{Bivincular patterns} are a natural extension of vincular patterns where we
may also put constraints on the values in a permutation. Bivincular patterns
where first introduced by Bousquet-M\'elou et al.~\cite{MR2652101}. The pattern
$p = \pattern{scale=0.5}{ 3 }{ 1/1, 2/3, 3/2 }{0/2, 1/0, 1/1, 1/2, 1/3, 2/2,
3/2}$ requires the letters corresponding to $1$ and $3$ in a permutation to be
adjacent in position and the letters corresponding to $2$ and $3$ in a
permutation to be adjacent in size. The permutation
\[
14253 =
 \begin{tikzpicture}[scale=0.3, baseline=(current bounding box.center)]
    \draw (0.01,0.01) grid (5+0.99,5+0.99);
  \filldraw (1,1) circle (6pt);
  \filldraw (2,4) circle (6pt);
  \filldraw (3,2) circle (6pt);
  \filldraw (4,5) circle (6pt);
  \filldraw (5,3) circle (6pt);
  % draw three circles around the points
  \draw (1,1) circle (12pt);
  \draw (2,4) circle (12pt);
  \draw (5,3) circle (12pt);
 \end{tikzpicture}
\]
contains the pattern $p$ because it contains the classical pattern $132$ as the
subsequence $143$ and letters $1$ and $4$ are adjacent in position and letters
$3$ and $4$ are adjacent in size. On a diagram, we observe that there are no
points in the shaded areas.
\[
14253 =
 \begin{tikzpicture}[scale=0.3, baseline=(current bounding box.center)]
    \draw (0.01,0.01) grid (5+0.99,5+0.99);
    \fill[pattern=north east lines] (0,3) rectangle +(6,1);
    \fill[pattern=north east lines] (1,0) rectangle +(1,6);
  \filldraw (1,1) circle (6pt);
  \filldraw (2,4) circle (6pt);
  \filldraw (3,2) circle (6pt);
  \filldraw (4,5) circle (6pt);
  \filldraw (5,3) circle (6pt);
  % draw three circles around the points
  \draw (1,1) circle (12pt);
  \draw (2,4) circle (12pt);
  \draw (5,3) circle (12pt);
 \end{tikzpicture}
\]

However the permutation $12543$ contains one occurrence of the classical pattern
$132$ as the subsequence $253$ but $5$ and $3$ are not adjacent in size, so that
is not an occurrence of the bivincular pattern.
\[
12543 =
 \begin{tikzpicture}[scale=0.3, baseline=(current bounding box.center)]
    \draw (0.01,0.01) grid (5+0.99,5+0.99);
    \fill[pattern=north east lines] (3,0) rectangle +(1,6);
  \fill[pattern=north east lines] (0,3) rectangle +(6,2);
  \filldraw (1,1) circle (6pt);
  \filldraw (2,2) circle (6pt);
  \filldraw (3,5) circle (6pt);
  \filldraw (4,4) circle (6pt);
  \filldraw (5,3) circle (6pt);
  % draw three circles around the points
  \draw (2,2) circle (12pt);
  \draw (3,5) circle (12pt);
  \draw (5,3) circle (12pt);
 \end{tikzpicture}
\]
Bivincular patterns of length $2$ and $3$ were Wilf-classified by Parviainen
in~\cite{RB}. Some of his results are a consequence of Lemma~\ref{lem:shading}
below.
\newline

Mesh patterns where first introduced by Br\"and\'en and
Claesson~\cite{MR2795782}, as a further extension of bivincular patterns, that
also subsumes barred patterns with a single bar~\cite{W90}, and interval
patterns~\cite{MR2422304}. A pair $(\tau,R)$, where $\tau$ is a permutation in
$S_k$ and $R$ is a subset of $\dbrac{0,k} \times \dbrac{0,k}$, where
$\dbrac{0,k}$ denotes the interval of the integers from $0$ to $k$, is a
\emph{mesh pattern} of length $k$.
\newline

Let $\boks{i}{j}$ denote the box whose corners have coordinates $(i,j), (i,j+1),
(i+1,j+1)$ and $(i+1,j)$. An example of a mesh pattern is the classical pattern
$312$ along with $R = \{\boks{1}{2},\boks{2}{1}\}$. We draw this by shading the
boxes in $R$
\[
\pattern{scale=1}{ 3 }{ 1/3, 2/1, 3/2 }{1/2, 2/1}
\]
The permutation $521643$ contains this pattern, see below
\[
521643 =
 \begin{tikzpicture}[scale=0.3, baseline=(current bounding box.center)]
    \draw (0.01,0.01) grid (6+0.99,6+0.99);
    \fill[pattern=north east lines] (1,4) rectangle +(2,1);
  \fill[pattern=north east lines] (3,1) rectangle +(2,3);
  \filldraw (1,5) circle (6pt);
  \filldraw (2,2) circle (6pt);
  \filldraw (3,1) circle (6pt);
  \filldraw (4,6) circle (6pt);
  \filldraw (5,4) circle (6pt);
  \filldraw (6,3) circle (6pt);
  % draw three circles around the points
  \draw (1,5) circle (12pt);
  \draw (3,1) circle (12pt);
  \draw (5,4) circle (12pt);
 \end{tikzpicture}
\]
The permutation has an occurrence of the mesh pattern as the subsequence $514$,
since it forms the classical pattern $312$ and there are no points in the shaded
areas.

Let's now look at the permutation $\pi = 32145$. This permutation avoids the
pattern
$(123,\{\boks{0}{1},\boks{1}{0},\boks{2}{2}\})=\pattern{scale=0.5}{3}{1/1, 2/2,
3/3}{0/1, 1/0, 2/2}$, because for all occurrences of the classical pattern $123$
there is at least one point in at least one of the shaded boxes. For example,
the subsequence $245$ in $\pi$ is an occurrence of the classical pattern $123$
but not of the mesh pattern since the point representing $1$ is in one of the
shaded areas. This can be seen on the following diagram.
\[
32145 =
 \begin{tikzpicture}[scale=0.3, baseline=(current bounding box.center)]
    \draw (0.01,0.01) grid (5+0.99,5+0.99);
    \fill[pattern=north east lines] (4,4) rectangle +(1,1);
  \fill[pattern=north east lines] (0,2) rectangle +(2,2);
  \fill[pattern=north east lines] (2,0) rectangle +(2,2);
  \filldraw (1,3) circle (6pt);
  \filldraw (2,2) circle (6pt);
  \filldraw (3,1) circle (6pt);
  \filldraw (4,4) circle (6pt);
  \filldraw (5,5) circle (6pt);
  % draw three circles around the points
  \draw (2,2) circle (12pt);
  \draw (4,4) circle (12pt);
  \draw (5,5) circle (12pt);
 \end{tikzpicture}
\]

\section{Operations preserving Wilf-equivalence}
The number of mesh patterns of length $n$ is $n! \cdot 2^{(n+1)^2}$ so already
for $n = 2$ we have $1024$ patterns. This makes the Wilf-classification
very tedious to do by hand. We therefore review known operations (or symmetries)
that preserve Wilf-equivalence as well as introducing new ones. This will allow
us to automatically bring the number of patterns to consider down to $65$.

The first known operations are the symmetries reverse, complement and inverse.
For a given mesh pattern $(\tau,R)$ of length $n$, we define
\begin{equation*}
    (\tau,R)^r = (\tau^r, R^r), \quad
    (\tau,R)^c = (\tau^c,R^c), \quad
    (\tau,R)^i = (\tau^i,R^i),
\end{equation*}
where $\tau^r$ is the usual reverse of the permutation $\tau$, $\tau^c$ the
usual complement, $\tau^i$ the usual inverse, and
\begin{align*}
&R^r = \{ \boks{n-x}{y} \sep \boks{x}{y} \in R \}, \\
&R^c = \{ \boks{x}{n-y} \sep \boks{x}{y} \in R \}, \\
&R^i = \{ \boks{y}{x}   \sep \boks{x}{y} \in R \}.
\end{align*}
Hence, reverse is a reflection around the vertical center line, complement is a
reflection around the horizontal center line and inverse is the reflection
around the southwest to northeast diagonal. Figure~\ref{fig:meshsyms} is an
example of the use of these symmetries on the pattern $p
=(312,\{\boks{0}{1},\boks{1}{3},\boks{2}{2}\})$.
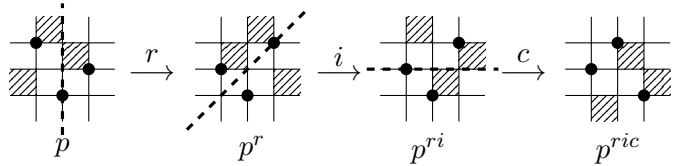
\begin{figure}[hb]
 \begin{tikzpicture}[scale=0.35]
  \begin{scope}
    % Make the grid
    \draw (0.01,0.01) grid (3+0.99,3+0.99);

    % Make the shaded boxes
    \fill[pattern=north east lines] (0,1) rectangle +(1,1);
    \fill[pattern=north east lines] (1,3) rectangle +(1,1);
    \fill[pattern=north east lines] (2,2) rectangle +(1,1);

  % Make the pattern
  \filldraw (1,3) circle (6pt);
  \filldraw (2,1) circle (6pt);
  \filldraw (3,2) circle (6pt);

  % Make the dashed line to show reverse

  \draw [dashed,very thick,black] (2,-0.5) -- (2,4.5);

  \end{scope}

  \node at (2,-1)  {$p$};
  \node at (5.4,2.5) {$r$};
  \node at (5.4,1.8 ) {$\longrightarrow$};

  \begin{scope}[xshift = 200]

    % Make the grid
    \draw (0.01,0.01) grid (3+0.99,3+0.99);

    % Make the shaded boxes
    \fill[pattern=north east lines] (1,2) rectangle +(1,1);
    \fill[pattern=north east lines] (2,3) rectangle +(1,1);
    \fill[pattern=north east lines] (3,1) rectangle +(1,1);

  % Make the pattern
  \filldraw (1,2) circle (6pt);
  \filldraw (2,1) circle (6pt);
  \filldraw (3,3) circle (6pt);

  % Make the dashed line to show reverse

    \draw [dashed,very thick,black] (-0.3,-0.3) -- (4.3,4.3);

  \end{scope}

    \node at (12.5,2.5) {$i$};
  \node at (12.5,1.8 ) {$\longrightarrow$};
  \node at (9.2,-1) {$p^r$};

  \begin{scope}[xshift = 400]

    % Make the grid
    \draw (0.01,0.01) grid (3+0.99,3+0.99);

    % Make the shaded boxes
    \fill[pattern=north east lines] (1,3) rectangle +(1,1);
    \fill[pattern=north east lines] (2,1) rectangle +(1,1);
    \fill[pattern=north east lines] (3,2) rectangle +(1,1);

  % Make the pattern
  \filldraw (1,2) circle (6pt);
  \filldraw (2,1) circle (6pt);
  \filldraw (3,3) circle (6pt);

  % Make the dashed line to show reverse

    \draw [dashed,very thick,black] (-0.5,2) -- (4.5,2);

  \end{scope}

      \node at (19.5,2.5) {$c$};
  \node at (19.5,1.8 ) {$\longrightarrow$};
  \node at (15.9,-1) {$p^{ri}$};

  \begin{scope}[xshift = 600]

    % Make the grid
    \draw (0.01,0.01) grid (3+0.99,3+0.99);

    % Make the shaded boxes
    \fill[pattern=north east lines] (1,0) rectangle +(1,1);
    \fill[pattern=north east lines] (2,2) rectangle +(1,1);
    \fill[pattern=north east lines] (3,1) rectangle +(1,1);

  % Make the pattern
  \filldraw (1,2) circle (6pt);
  \filldraw (2,3) circle (6pt);
  \filldraw (3,1) circle (6pt);

  % Make the dashed line to show reverse

  \end{scope}

  \node at (23.1,-1) {$p^{ric}$};

 \end{tikzpicture}
\caption{Several symmetries of a mesh pattern} \label{fig:meshsyms}
\end{figure}
It is well-known that a permutation $\pi$ avoids a mesh pattern $p$ if and only
if the permutation $\pi^r$ avoids $p^r$. That is, the reverse operation
preserves Wilf-equivalence. The same applies for complement and inverse or any
composition of these three operations.

We now define the first of the new operations.

\begin{definition}
Let $\pi$ be a permutation (or a classical pattern) of length $n$. We define
the \emph{up-shift} of $\pi$, as $\pi^\uparrow$, where
\[
  \pi^\uparrow_i = (\pi_i \bmod n) + 1, \quad \text{for $1 \leq i \leq n$}.
\]
Let $p = (\tau, R)$ be a mesh pattern of length $n$. We define the
\emph{up-shift} of $p$ as the pattern $p^{\uparrow} = (\tau^\uparrow,
R^\uparrow)$, where
\[
  R^\uparrow = \{ \boks{a}{(b+1) \bmod (n+1)} \sep \boks{a}{b} \in R \}.
\]
\end{definition}

\begin{example}
This example shows the effect of the up-shift operation on a mesh pattern of
length $6$.
\[
\pattern{scale = 1}{6}{1/4,2/1,3/6,4/2,5/3,6/5}{0/2,2/4,1/5,3/6,4/3,4/4,5/4,6/0}
\overset{\text{up-shift}}{\longrightarrow}
\pattern{scale = 1}{6}{1/5,2/2,3/1,4/3,5/4,6/6}{0/3,2/5,1/6,3/0,4/4,4/5,5/5,6/1}
\]
\end{example}

\begin{proposition} \label{prop:ups}
Let $p=(\tau, R)$ be a pattern of length $n$ where the top line is shaded, that
is $\{(0,n), (1,n), \ldots, (n,n)\}\subseteq R$. Then a permutation $\pi$
avoids $p$ if and only if $\pi^{\uparrow}$ avoids $p^{\uparrow}$. That is,
up-shift preserves Wilf-equivalence for this kind of pattern.
\end{proposition}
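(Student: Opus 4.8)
The plan is to prove the equivalent contrapositive statement that $\pi$ \emph{contains} $p$ if and only if $\pi^{\uparrow}$ \emph{contains} $p^{\uparrow}$, by exhibiting an explicit invertible correspondence between occurrences. The first step is to extract the structural consequence of the hypothesis. If the whole top row $\{(0,n),\ldots,(n,n)\}$ is shaded, then in any occurrence of $p$ in $\pi$ the union of those $n+1$ boxes is exactly the region strictly above the highest point of the occurrence: the $n$ chosen positions are distinct, so their columns contain no further point of $\pi$. Hence no point of $\pi$ lies strictly above the highest point of the occurrence, which forces that highest point to be the point of value $N$, where $N$ is the length of $\pi$. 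Dually, the definition of $R^{\uparrow}$ together with the hypothesis gives $\{(0,0),\ldots,(n,0)\}\subseteq R^{\uparrow}$, so $p^{\uparrow}$ has its whole bottom row shaded; by the mirror argument the lowest point of any occurrence of $p^{\uparrow}$ in any permutation is its point of value $1$, which in $\pi^{\uparrow}$ is precisely the image under up-shift of the value-$N$ point of $\pi$.

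Next I would define the correspondence: an occurrence of $p$ in $\pi$ is a set $I$ of $n$ positions, and I send it to the same set of positions $I$, now read in $\pi^{\uparrow}$; the inverse map again keeps the position set. Since up-shift is a bijection $S_N\to S_N$ and the map is the identity on position sets, it is manifestly invertible, so the work is to check that it carries occurrences to occurrences in both directions. For the underlying classical pattern: by the first step the value-$N$ point of $\pi$ is among the chosen points, and up-shift fixes all positions while sending value $N$ to $1$ and every other value $v$ to $v+1$; thus among the chosen values the unique maximum becomes the unique minimum while the relative order of the remaining chosen values is unchanged, and this is exactly the effect of the $\bmod n$ up-shift on $\tau$, so the chosen points of $\pi^{\uparrow}$ realize $\tau^{\uparrow}$.

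For the shading, the key point is that every non-chosen point of $\pi$ has value strictly smaller than $N$ (because $N$ is chosen), so under up-shift it merely moves up one value level with its position unchanged. Consequently the $n+1$ horizontal strips cut out by the chosen points — "below the lowest'', the $n-1$ "between consecutive'' strips, and "above the highest'' (this last one necessarily empty) — are permuted cyclically upward, matching $b\mapsto (b+1)\bmod(n+1)$ on mesh rows, while the columns are untouched. Therefore a box $(a,b)\in R$ contains a point of the occurrence in $\pi$ if and only if the box $(a,(b+1)\bmod(n+1))\in R^{\uparrow}$ contains a point of the image occurrence in $\pi^{\uparrow}$, so the mesh condition is preserved; running the identical bookkeeping for the inverse operation, and using the dual observation that $p^{\uparrow}$ is bottom-row shaded, shows the inverse map also sends occurrences to occurrences. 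This yields a bijection between occurrences of $p$ in $\pi$ and occurrences of $p^{\uparrow}$ in $\pi^{\uparrow}$, and in particular one set is empty exactly when the other is, which is the proposition.

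The main obstacle is the shading bookkeeping of the third paragraph: one must track the cyclic wrap-around carefully, in particular that mesh row $n$ of $p$ (sitting above value $N$, hence empty in any occurrence) corresponds to mesh row $0$ of $p^{\uparrow}$ (sitting below value $1$, hence empty), and verify the two extreme rows $0$ and $n$ behave as claimed, together with the degenerate case $n=1$. Everything else is routine once the ``forced maximum'' observation of the first step is available; note that this observation is precisely what makes the correspondence work at all, since without it a chosen value need not wrap around and the chosen points of $\pi^{\uparrow}$ would realize $\tau$ rather than $\tau^{\uparrow}$.
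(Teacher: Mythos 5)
Your proof is correct and takes essentially the same approach as the paper: the paper's proof also sends an occurrence of $p$ at positions $i_1,\dots,i_n$ to the same positions in $\pi^{\uparrow}$ and concludes by the bijectivity of up-shift, declaring the verification ``easy to see.'' Your forced-maximum observation and the cyclic strip bookkeeping are precisely the details that verification needs, and your explicit handling of the converse direction via the bottom-row-shaded pattern $p^{\uparrow}$ supplies a step the paper leaves implicit.
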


\begin{proof}
Let $\pi$ be a permutation with an occurrence $\pi_{i_1}, \dots, \pi_{i_n}$ of a
pattern $p$ as described in the proposition. It is easy to see that
$\pi^{\uparrow}_{i_1}, \dots, \pi^{\uparrow}_{i_n}$ is an occurrence of
$p^{\uparrow}$ in $\pi^{\uparrow}$. Since the map $S_n \to S_n$, $\pi \mapsto
\pi^{\uparrow}$ is a bijection the claim follows.
\end{proof}

\begin{example}
Here is another example that shows the effect of up-shift on a mesh pattern of
length $6$.
\[
\pattern{scale = 1}{6}{1/3,2/6,3/1,4/4,5/5,6/2}{0/6,1/3,1/6,2/0,2/1,2/3,2/6,3/3,3/6,4/6,5/4,5/6,6/2,6/6}
\overset{\text{up-shift}}{\longrightarrow}
\pattern{scale = 1}{6}{1/4,2/1,3/2,4/5,5/6,6/3}{0/0,1/0,1/4,2/0,2/1,2/2,2/4,3/0,3/4,4/0,5/0,5/5,6/0,6/3}
\]
According to Proposition~\ref{prop:ups} these two patterns are Wilf-equivalent.
\end{example}

Given a permutation $\pi \in S_n$, $0\pi$ is the permutation $\pi$ with $0$
prepended. Then define $\pi \boxplus 1 = (\pi_1 +1 \bmod(n+1))(\pi_2 +1
\bmod(n+1)) \cdots (\pi_n + 1 \bmod(n+1))$. For a given permutation $\lambda$ of
$\dbrac{0,n}$, $\lambda_0$ is the permutation in $S_n$ obtained by appending the
part of $\lambda$ to the left of $0$ to the other part, e.g., $14032_0 =3214$.

\begin{definition}[Ulfarsson~\cite{K}]
Let $\pi$ be a permutation (or a classical pattern) of length $n$. We define
the \emph{toric-shift} of $\pi$ as
\[
  \pi^t = (\pi^0 \boxplus 1)_0.
\]
Let $p = (\tau, R)$ be a mesh pattern of length $n$. We define the
\emph{toric-shift} of $p$ as the pattern $p^t = (\tau^t, R^t)$, where
\[
  R^t = \{ \boks{a + (n+1-\ell) \bmod (n+1)}{b + 1 \bmod (n+1)} \sep
    \boks{a}{b} \in R \}.
\]
Here $\ell$ is the position of the letter $n$ in the classical pattern $\tau$.
\end{definition}

\begin{example}
This example shows the effect of toric-shift on a mesh pattern of length $6$.
\[
\pattern{scale = 1}{6}{1/5,2/4,3/6,4/1,5/3,6/2}{1/4,3/2,3/3,4/6,5/0,6/4}
\overset{\text{toric-shift}}{\longrightarrow}
\pattern{scale = 1}{6}{1/2,2/4,3/3,4/1,5/6,6/5}{0/3,0/4,1/0,2/1,3/5,5/5}
\]
\end{example}

\begin{observation}
Let $p= (\tau, R)$ be a pattern of length $n$ where the top line is shaded,
that is $\{(0,n), (1,n), \ldots, (n,n)\}\subseteq R$. Recall from
Ulfarsson~\cite{K} that a permutation $\pi$ avoids $p$ if and only if $\pi^t$
avoids $p^t$. That is, toric-shift preserves Wilf-equivalence for this kind of
pattern.
\end{observation}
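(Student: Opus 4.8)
The plan is to follow Ulfarsson~\cite{K} and pass to the discrete torus, which is the natural home for a mesh pattern whose top row is shaded. First I would recall the dictionary between permutations and toric point configurations: to $\sigma \in S_m$ assign $C(\sigma) = \{(0,0)\} \cup \{\boks{i}{\sigma_i} \sep 1 \le i \le m\}$, viewed inside $\mathbb{Z}_{m+1} \times \mathbb{Z}_{m+1}$, which has exactly one point in each row and each column; conversely, given such a configuration, rotating a chosen one of its points to $(0,0)$ and reading off the remaining $m$ points recovers an element of $S_m$. The torus rotations $(x,y) \mapsto (x+a,\, y+b) \bmod (m+1)$ carry configurations to configurations. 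I would then verify, straight from $\pi^t = (\pi^0 \boxplus 1)_0$, that $C(\pi^t)$ is obtained from $C(\pi)$ by the rotation $(x,y) \mapsto (x + (m+1-\ell),\, y+1) \bmod (m+1)$ --- where $\ell$ is the position of $m$ in $\pi$ --- after which the point now sitting at $(0,0)$, namely the image of the former maximum $\boks{\ell}{m}$, is declared the new phantom. The displayed formula for $R^t$ says exactly that the boxes of $p$ undergo the analogous rotation on the $(n+1) \times (n+1)$ torus of the pattern (with $\ell$ now the position of $n$ in $\tau$), so on the toric level $p \mapsto p^t$ is ``the same operation'' as $\pi \mapsto \pi^t$.

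Second, I would bring in the hypothesis. If $\{(0,n), \dots, (n,n)\} \subseteq R$, then in any occurrence of $p$ in $\pi$ the whole horizontal strip above the highest chosen point must contain no point of $\pi$, forcing that highest chosen point to be the global maximum $m$ of $\pi$. This is precisely the condition under which occurrences of $p$ in $\pi$ coincide with ``toric occurrences'' of the toric mesh pattern built from $\tau$ inside the configuration $C(\pi)$: emptiness of the top strip is what lets the phantom $(0,0)$ of $C(\pi)$ legitimately take on the role of the phantom of the toric pattern when the torus is cut. A toric occurrence is manifestly unaffected by a rotation of the torus, so the rotation from the first step gives a bijection between occurrences of $p$ in $\pi$ and occurrences of $p^t$ in $\pi^t$; in particular $\pi$ avoids $p$ if and only if $\pi^t$ avoids $p^t$. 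Since $\pi \mapsto \pi^t$ is a bijection of $S_m$ (each of the three operations composing it is reversible on the relevant words), it restricts to a bijection $S_m(p) \to S_m(p^t)$, so $|S_m(p)| = |S_m(p^t)|$ for every $m$; that is, toric-shift preserves Wilf-equivalence for patterns with shaded top row.

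The substance, and the main obstacle, lies in the second step: making precise what a ``toric occurrence'' of a mesh pattern is on a torus that carries no intrinsic linear order, and proving that --- exactly when the top row of $p$ is shaded --- an ordinary occurrence of $p$ in $\pi$ and a toric occurrence of the toric pattern in $C(\pi)$ are literally the same data, so that the choice of cut point (the only thing distinguishing an honest permutation from a free toric configuration) neither creates nor destroys occurrences. All of this is carried out in full in~\cite{K}; for the present paper it remains only to observe, as above, that the hypothesis used there is implied by ``the top line is shaded.''
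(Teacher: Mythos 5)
The paper offers no proof of this Observation beyond the citation of Ulfarsson~\cite{K}, and your argument likewise defers the technical core (the precise notion of a toric occurrence and its invariance under rotation of the torus) to that reference, so you are taking essentially the same route. Your supporting sketch --- that the shaded top row forces the point matched to the pattern's maximum to be the global maximum of $\pi$, so that ordinary occurrences coincide with toric occurrences and are carried by the rotation to occurrences of $p^t$ in $\pi^t$, with $\pi \mapsto \pi^t$ a bijection --- is correct and is exactly the observation needed to invoke~\cite{K}.
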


\begin{definition}
Two mesh patterns $p$ and $q$ are said to be \emph{coincident}, denoted by
$p \asymp q$, if for any permutation $\pi$, $\pi$ avoids $p$ if and only if
$\pi$ avoids $q$.
\end{definition}

Coincident patterns are obviously Wilf-equivalent.

\begin{observation}\label{obs:shading}
Let $R' \subseteq R$. Then any occurrence of $(\tau, R)$ in a permutation is an
occurrence of $(\tau, R')$.
\end{observation}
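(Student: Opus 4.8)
The plan is to prove Observation~\ref{obs:shading} directly from the definition of an \emph{occurrence} of a mesh pattern, using only the fact that shrinking the shading set weakens the emptiness requirement.

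First I would fix notation for an occurrence. Given a permutation $\pi \in S_n$ and a mesh pattern $(\tau, R)$ of length $k$, a subsequence $\pi_{i_1} \cdots \pi_{i_k}$ with $1 \le i_1 < \cdots < i_k \le n$ is an occurrence of $(\tau, R)$ exactly when two conditions hold: \textbf{(a)} the subsequence is order-isomorphic to $\tau$, that is, $\pi_{i_a} < \pi_{i_b} \iff \tau_a < \tau_b$ for all $a, b$; and \textbf{(b)} the \emph{shading condition}, namely, if $v_1 < \cdots < v_k$ are the values $\pi_{i_1}, \dots, \pi_{i_k}$ listed in increasing order and we set $i_0 = v_0 = 0$ and $i_{k+1} = v_{k+1} = n+1$, then for every box $\boks{x}{y} \in R$ there is no index $j$ with $i_x < j < i_{x+1}$ and $v_y < \pi_j < v_{y+1}$. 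In the diagram of $\pi$ with the occurrence marked, condition (b) says precisely that the cell corresponding to each box of $R$ contains no point of $\pi$.

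Then I would carry out the (very short) argument. Assume $R' \subseteq R$ and let $\pi_{i_1} \cdots \pi_{i_k}$ be an occurrence of $(\tau, R)$. Condition (a) makes no reference to the shading, so it holds verbatim for $(\tau, R')$. For condition (b), every box of $R'$ is also a box of $R$, hence the collection of cells that $R'$ requires to be empty is a subcollection of those that $R$ requires to be empty; since all the latter are empty for this subsequence, so are all the former. Therefore $\pi_{i_1} \cdots \pi_{i_k}$ is an occurrence of $(\tau, R')$, which is the claim.

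Because the statement is essentially immediate once the definition is spelled out, there is no real obstacle here; the only point requiring a little care is to phrase the shading condition so that the inclusion $R' \subseteq R$ visibly turns into an implication between the two emptiness requirements. I would also note the two consequences that the rest of the paper relies on: taking $R' = \emptyset$ shows that any occurrence of a mesh pattern with underlying classical pattern $\tau$ is in particular an occurrence of $\tau$; and, reading the observation contrapositively, every permutation avoiding $(\tau, R')$ also avoids $(\tau, R)$, so that $(\tau, R) \asymp (\tau, R')$ as soon as the reverse implication can also be established — which is the role of Lemma~\ref{lem:shading}.
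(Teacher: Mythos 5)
Your proof is correct and is exactly the argument the paper has in mind: the paper states this as an unproved observation precisely because, once the definition of an occurrence is written out, the inclusion $R' \subseteq R$ immediately weakens the emptiness requirement. Your careful formalization of the shading condition and the closing remarks about its use elsewhere in the paper are accurate.
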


The next example will be generalized below into a powerful lemma that allows
adding more shaded boxes to a mesh pattern, while maintaining the coincidence
with the original pattern.

\begin{example}
Consider the mesh pattern $p=(12,\varnothing)=
\pattern{scale = 0.5}{2}{1/1,2/2}{}$. Let $u$ be the point $(1,1)$ and $v$ the
point $(2,2)$, in the pattern. We claim that the mesh pattern
$q=(12,\{\boks{0}{0}\})=\pattern{scale = 0.5}{2}{1/1,2/2}{0/0}$ is coincident to
$p$. Because of Observation~\ref{obs:shading} it suffices to show that if a
permutation contains $p$ it also contains $q$. Let $\pi$ be a permutation that
contains $p$ and consider a particular occurrence of it. Let $k$ be the number of
points in the box $\boks{0}{0}$ in $\pi$. If $k = 0$, it is clear that $\pi$
contains $q$ as well. If $k\geq 1$, then we can choose the leftmost (or the
lowest point), call it $d$, in the box $\boks{0}{0}$ and replace $u$ with $d$.
It is clear that the subsequence $ud$ satisfies the requirements of the mesh in
$q$. This can be interpreted as shading the box $\boks{0}{0}$.
Figure~\ref{fig:patteq} shows an example of this coincidence. In the left image
in the figure, the pattern $p$ can be found with the points $(7,5)$ as $u$ and
$(8,8)$ as $v$. There are points in the area at the lower left of $u$ and the
circled point, $(2,2)$, is the leftmost point in that area, and thus we denote
that point as $d$. In the right image, point $v$ is still $(8,8)$ but now $u$
has been replaced with $d$.
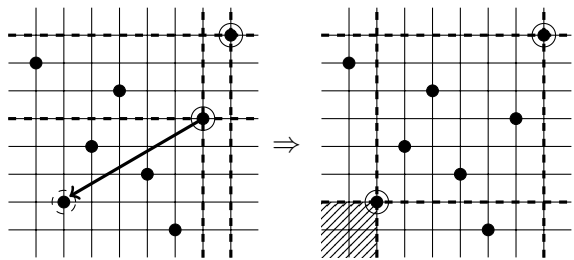
\begin{figure}[ht]
 \begin{tikzpicture}[scale=0.37, baseline=(current bounding box.center),auto]
 \begin{scope}
    \draw (0.01,0.01) grid (8+0.99,8+0.99);
  \filldraw (1,7) circle (6pt);
  \filldraw (2,2) circle (6pt);
  \filldraw (3,4) circle (6pt);
  \filldraw (4,6) circle (6pt);
  \filldraw (5,3) circle (6pt);
  \filldraw (6,1) circle (6pt);
  \filldraw (7,5) circle (6pt);
  \filldraw (8,8) circle (6pt);
  % draw the circles around the points
  \draw (7,5) circle (12pt);
  \draw (8,8) circle (12pt);
  \draw[dashed] (2,2) circle (12pt);

  %draw the arrow
  \draw[->,very thick] (7,5) -- (2.2,2.2);

  % The dashed lines

 \draw [dashed, very thick] (7,0) -- (7,9);
 \draw [dashed, very thick] (0,5) -- (9,5);

  \draw [dashed, very thick] (8,0) -- (8,9);
 \draw [dashed, very thick] (0,8) -- (9,8);

 \end{scope}

  \node at (10,4) {$\Rightarrow$};

  \begin{scope}[xshift=320]
    \draw (0.01,0.01) grid (8+0.99,8+0.99);
  \filldraw (1,7) circle (6pt);
  \filldraw (2,2) circle (6pt);
  \filldraw (3,4) circle (6pt);
  \filldraw (4,6) circle (6pt);
  \filldraw (5,3) circle (6pt);
  \filldraw (6,1) circle (6pt);
  \filldraw (7,5) circle (6pt);
  \filldraw (8,8) circle (6pt);

  % draw the circles around the points
  \draw (2,2) circle (12pt);
  \draw (8,8) circle (12pt);

  % The dashed lines

  \draw [dashed, very thick] (8,0) -- (8,9);
  \draw [dashed, very thick] (0,8) -- (9,8);

  \draw [dashed, very thick] (2,0) -- (2,9);
 \draw [dashed, very thick] (0,2) -- (9,2);

 % Shade 1 box
\fill[pattern=north east lines] (0,0) rectangle + (2,2);
\end{scope}
 \end{tikzpicture}
\caption{This picture shows two coincident patterns in the permutation
$72463158$} \label{fig:patteq}
 \end{figure}
\end{example}

This example generalizes to a new operation, introduced in
Lemma~\ref{lem:shading}, which preserves coincidence of mesh patterns.

\begin{lemma}[Shading Lemma] \label{lem:shading}
Let $(\tau,R)$ be a mesh pattern of length $n$ such that $\tau(i)=j$ and the
box $\boks{i}{j} \not\in R$. If all of the following conditions are satisfied:
\begin{enumerate}
  \item \label{lem:shading:req1} The box $\boks{i-1}{j-1}$ is not in $R$;
  \item \label{lem:shading:req2} At most one of the boxes $\boks{i}{j-1}$,
    $\boks{i-1}{j}$ is in $R$;
  \item \label{lem:shading:req3} If the box $\boks{\ell}{j-1}$ is in $R$
    (with $\ell \neq i-1,i$) then the box $\boks{\ell}{j}$ is also in $R$;
  \item \label{lem:shading:req4} If the box $\boks{i-1}{\ell}$ is in $R$
    (with $\ell \neq j-1, j$) then the box $\boks{i}{\ell}$ is also in $R$;
\end{enumerate}
then the patterns $(\tau,R)$ and $(\tau,R\cup\{\boks{i}{j} \})$ are coincident.
Analogous conditions determine if other boxes neighboring the point $(i,j)$ can
be added to $R$ while preserving the coincidence of the corresponding patterns.
\begin{center}
 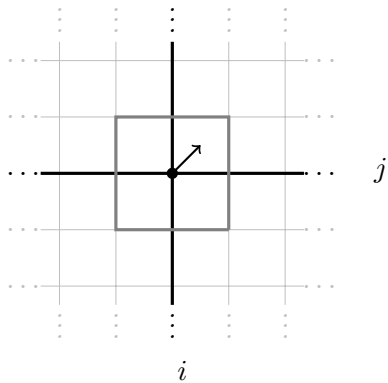
\begin{figure}[ht]
 \begin{tikzpicture}[scale = 0.75]

 % Make x-axis and y-axis
% \draw[->] (0,0) -- (0,6.5);
 %\draw[->] (0,0) -- (6.5,0);

 % Make the grid
 \draw[step=1cm,gray!50,very thin] (2/3,2/3) grid (5+1/3,5+1/3);

 % Make black cross in the middle
 \draw[-,very thick] (3,2/3) -- (3,5+1/3);
 \draw[-,very thick] (2/3,3) -- (5+1/3,3);
 \draw[-,very thick,gray] (4,2) -- (2,2) -- (2,4) -- (4,4) -- (4,2);

 \fill[black] (3,3) circle (0.1);

 % Draw the dots on the side
 \foreach \x in {1,2,4,5}{
   \node [gray!50,below] at (\x, 9.7/10) {$\vdots$};
   \node [gray!50,below] at (\x, 6+4/10) {$\vdots$};
}
 \foreach \y in {1,2,4,5}{
   \node [gray!50,below] at (4/10,\y + 0.21) {$\dots$};
   \node [gray!50,below] at (5+6.5/10,\y + 0.21) {$\dots$};
}
 % Draw black dots next to the cross
 \node [black,below] at (3, 9.7/10){$\vdots$};
 \node [black,below] at (3, 6+4/10){$\vdots$};

 % Draw black dots next to the cross
 \node [black,below] at (4/10, 3+0.21){$\dots$};
 \node [black,below] at (5+6.5/10, 3+0.21){$\dots$};

 % Draw the small lines on the axes
% \foreach \x in {1,2,3,4,5,6}
  % \draw (\x cm,1.5pt) -- (\x cm,-1.5pt) node[anchor = north]{};

% \foreach \y in {1,2,3,4,5,6}
 %  \draw (1.5pt,\y cm) -- (-1.5pt,\y cm) node[anchor = east]{};

 \draw[->,thick] (3,3) -- (3.5,3.5);

 \node [black,below] at (6.7,3.45) {$j$};
 \node [black,left]  at (3.45,-0.5) {$i$};

 \end{tikzpicture}
\caption{If the condition of the lemma are satisfied the box $\boks{i}{j}$ can
be shaded}
 \end{figure}
 \end{center}
\end{lemma}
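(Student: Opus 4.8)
The plan is to establish the one nontrivial half of the coincidence. By Observation~\ref{obs:shading} every occurrence of $(\tau,R\cup\{\boks{i}{j}\})$ is an occurrence of $(\tau,R)$, so it suffices to show that whenever a permutation $\pi$ contains $(\tau,R)$ it also contains $(\tau,R\cup\{\boks{i}{j}\})$. Fix an occurrence $\mathcal O=(p_1,\dots,p_n)$ of $(\tau,R)$ in $\pi$, where $p_k$ is the point playing the role of the $k$-th position of the pattern; note that $p_i$ is the $j$-th lowest point of $\mathcal O$. If the region of $\pi$ cut out by the box $\boks{i}{j}$ relative to $\mathcal O$ contains no point of $\pi$, then $\mathcal O$ is already an occurrence of $(\tau,R\cup\{\boks{i}{j}\})$. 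Otherwise, imitating the example preceding the lemma, I replace $p_i$ by a point $d$ of $\pi$ lying inside that region, chosen to be the \emph{topmost} such point if $\boks{i-1}{j}\in R$ and the \emph{rightmost} such point otherwise; by condition~\ref{lem:shading:req2} the boxes $\boks{i-1}{j}$ and $\boks{i}{j-1}$ are never both in $R$, so these two cases are exhaustive. Write $\mathcal O'$ for the resulting $n$-element subset. One checks first that $\mathcal O'$ still realizes $\tau$: the point $d$ lies strictly between $p_{i-1}$ and $p_{i+1}$ in position, and its value is strictly between those of the $j$-th and $(j+1)$-st lowest points of $\mathcal O$, so $d$ occupies exactly the rank that $p_i$ did among the values, and the pattern formed is still $\tau$.

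The core of the proof is to check that every box of $R\cup\{\boks{i}{j}\}$ is empty relative to $\mathcal O'$. Moving $p_i$ up and to the right displaces only the two gridlines through $p_i$, so it changes the region of a box precisely when that box lies in column $i-1$ or $i$, or in row $j-1$ or $j$, of the mesh; every other box keeps its region and stays empty, and the boxes in column $i$ or row $j$ other than the four corner boxes at $p_i$ only shrink. For a box $\boks{\ell}{j-1}\in R$ with $\ell\neq i-1,i$, the extra territory it gains relative to $\mathcal O'$ is contained in the old region of $\boks{\ell}{j}$, which is empty because condition~\ref{lem:shading:req3} forces $\boks{\ell}{j}\in R$; the mirror-image computation together with condition~\ref{lem:shading:req4} handles every $\boks{i-1}{\ell}\in R$ with $\ell\neq j-1,j$. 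Of the four corner boxes, $\boks{i-1}{j-1}$ is kept out of $R$ by condition~\ref{lem:shading:req1}, while the old region of $\boks{i}{j}$ is split by $d$ into four quadrants: its north-east piece is the new region of $\boks{i}{j}$, its north-west piece is the territory gained by $\boks{i-1}{j}$, and its south-east piece is the territory gained by $\boks{i}{j-1}$. Taking $d$ topmost empties the two northern quadrants and taking $d$ rightmost empties the two eastern ones; either way the north-east quadrant is empty, and condition~\ref{lem:shading:req2} ensures that the one further quadrant that might need emptying is the northern or eastern one matching the choice of $d$. Hence $\mathcal O'$ is an occurrence of $(\tau,R\cup\{\boks{i}{j}\})$.

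I expect the only real obstacle to be the bookkeeping in the previous paragraph: one has to enumerate exactly which boxes grow when $p_i$ is pushed into $\boks{i}{j}$, notice that a growing box $\boks{\ell}{j-1}$ or $\boks{i-1}{\ell}$ can only encroach on a box that conditions~\ref{lem:shading:req3}--\ref{lem:shading:req4} force to be shaded (hence empty), and see that conditions~\ref{lem:shading:req1}--\ref{lem:shading:req2} are precisely what is needed next to $p_i$ itself, where the only freedom in placing $d$ is ``topmost or rightmost''. The concluding sentence of the lemma then follows by applying the reverse and complement symmetries, which preserve coincidence and together act transitively on the four boxes surrounding the point $(i,j)$, carrying conditions~\ref{lem:shading:req1}--\ref{lem:shading:req4} to the analogous conditions governing each of the other three boxes.
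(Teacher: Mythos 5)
Your proof is correct and follows essentially the same route as the paper's: reduce to one direction via Observation~\ref{obs:shading}, replace the pattern point by the topmost or rightmost point of the offending region (the choice dictated by which of $\boks{i-1}{j}$, $\boks{i}{j-1}$ is shaded, which condition~\eqref{lem:shading:req2} makes well-defined), and then verify box by box that conditions~\eqref{lem:shading:req1}--\eqref{lem:shading:req4} keep every shaded box empty. Your quadrant decomposition of the old region of $\boks{i}{j}$ is a slightly tidier way of organizing the corner-box bookkeeping than the paper's case-by-case list (the paper treats only the ``rightmost'' case and relegates the other to a footnote), but the substance is identical.
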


\begin{proof}
In this proof we assume that the box $\boks{i-1}{j}$ is not in $R$.
According to Observation~\ref{obs:shading} we know that if a permutation $\pi$
contains the pattern $q = (\tau, R \cup \{ \boks{i}{j} \})$, it also contains
the pattern $p = (\tau,R)$.

Now we show that if a permutation $\pi$ contains the pattern $p$ it also
contains the pattern $q$. Assume we have a particular occurrence of $p$ in $\pi$
so the pattern point $(i,j)$ corresponds to a particular point $(i',j')$ in
$\pi$ and the box $\boks{i}{j}$ corresponds to a certain region $\mathcal{K}$ in
$\pi$ containing $k$ points. If $k = 0$ then we have an occurrence of $q$. If $k
\geq 1$ then let $(i'',j'')$ be the rightmost\footnote{If we had assumed that
the box $\boks{i}{j-1}$ was not in $R$ then we would have chosen the topmost
point in the region $\mathcal{K}$.} point in the region $\mathcal{K}$. This
amounts to shifting the line $x=i'$ to the line $x=i''$ and the line $y=j'$ to
the line $y=j''$. While we have now an empty region in the permutation
corresponding to the box $\boks{i}{j}$ in the pattern we need to make sure we
have not violated any of the requirements of the mesh $R$. The shifting of the
lines has the following effect on the other neighboring boxes of the point
$(i,j)$:
\begin{itemize}
  \item The box $\boks{i-1}{j}$ is shrunk from below, but extended to the
    right. It might contain extra points after the shifting, but that does not
    matter since it is not shaded.
  \item The box $\boks{i-1}{j-1}$ is extended up and to the right. It now
    contains the point $(i',j')$ (and possibly others) but this box is not a
    part of the shading, because of requirement~\eqref{lem:shading:req1}.
  \item The box $\boks{i}{j-1}$ is shrunk from the left and extended up, but
    since we choose $(i'',j'')$ as the rightmost point in $\mathcal{K}$ we can
    be sure that this box is empty.
\end{itemize}
The effect on the rest of the mesh is as follows:
\begin{itemize}
  \item Every box $\boks{\ell}{j}$ ($\ell \neq i-1,i$) is shrunk (from below)
    and any empty box is still empty. In particular every shaded box can remain
    shaded.
  \item Every box $\boks{\ell}{j-1}$ ($\ell \neq i-1,i$) is extended into the
    previous $\boks{\ell}{j}$ box. Requirement~\eqref{lem:shading:req3} in the
    lemma ensures that a shaded box remains empty.
  \item Every box $\boks{i}{\ell}$ ($\ell \neq j,j-1$) is shrunk (from the
    left) and any empty box is still empty. In particular every shaded box can
    remain shaded.
  \item Every box $\boks{i-1}{\ell}$ ($\ell \neq j,j-1$) is extended into the
    previous $\boks{i}{\ell}$ box. Requirement~\eqref{lem:shading:req4} in the
    lemma ensures that a shaded box remains empty. \qedhere
\end{itemize}
\end{proof}

We note that a related result was independently proved by
Tenner~\cite[Theorem 3.5']{Tenner} which determines when a mesh pattern is
coincident to the underlying classical pattern.

\begin{example}
By using Lemma~\ref{lem:shading} the following coincidence can be found. The
point that the arrow is pointing from is the point $(i,j)$ in the lemma.
\[
\begin{tikzpicture}[scale=0.5, baseline=(current bounding box.center)]

    \draw (0.01,0.01) grid (3+0.99,3+0.99);

    \fill[pattern=north east lines] (1,2) rectangle +(1,1);
  \fill[pattern=north east lines] (2,1) rectangle +(1,1);

    \filldraw (1,1) circle (6pt);
    \filldraw (2,2) circle (6pt);
    \filldraw (3,3) circle (6pt);

  \draw [->,thick] (1,1) -- (1.5,1.5);

\end{tikzpicture}
 \asymp
\begin{tikzpicture}[scale=0.5, baseline=(current bounding box.center)]

    \draw (0.01,0.01) grid (3+0.99,3+0.99);

    \fill[pattern=north east lines] (1,2) rectangle +(1,1);
  \fill[pattern=north east lines] (2,1) rectangle +(1,1);
  \fill[pattern=north east lines] (1,1) rectangle +(1,1);

    \filldraw (1,1) circle (6pt);
    \filldraw (2,2) circle (6pt);
    \filldraw (3,3) circle (6pt);

  \draw [->,thick] (3,3) -- (2.5,2.5);

\end{tikzpicture}
 \asymp
\begin{tikzpicture}[scale=0.5, baseline=(current bounding box.center)]

    \draw (0.01,0.01) grid (3+0.99,3+0.99);

    \fill[pattern=north east lines] (1,1) rectangle +(1,1);
  \fill[pattern=north east lines] (2,2) rectangle +(1,1);
  \fill[pattern=north east lines] (2,1) rectangle +(1,1);
  \fill[pattern=north east lines] (1,2) rectangle +(1,1);

    \filldraw (1,1) circle (6pt);
    \filldraw (2,2) circle (6pt);
    \filldraw (3,3) circle (6pt);

\end{tikzpicture}
\]
First we can shade the box $\boks{1}{1}$ because when choosing the rightmost (or
the topmost point) in that box, neither of the shaded boxes, $\boks{1}{2}$ and
$\boks{2}{1}$, will be extended. When shading the box $\boks{2}{2}$ we choose
the leftmost (or the lowest point) and the same applies as before. Notice that
the point $(1,1)$ and the box $\boks{1}{0}$ do not fulfill the condition of the
lemma since the box $\boks{2}{1}$ is shaded but the box $\boks{2}{0}$ is not.
\end{example}
\begin{example}
\[
\begin{tikzpicture}[scale=0.35, baseline=(current bounding box.center)]

    \draw (0.01,0.01) grid (5+0.99,5+0.99);

  \fill[pattern=north east lines] (0,3) rectangle +(1,1);
    \fill[pattern=north east lines] (3,5) rectangle +(1,1);
    \fill[pattern=north east lines] (4,1) rectangle +(1,1);
    \fill[pattern=north east lines] (2,5) rectangle +(1,1);
    \fill[pattern=north east lines] (3,2) rectangle +(1,1);

    \filldraw (1,5) circle (6pt);
    \filldraw (2,1) circle (6pt);
    \filldraw (3,3) circle (6pt);
    \filldraw (4,4) circle (6pt);
    \filldraw (5,2) circle (6pt);

  \draw [->,thick] (1,5) -- (0.5,5.5);

\end{tikzpicture}
 \asymp
\begin{tikzpicture}[scale=0.35, baseline=(current bounding box.center)]

    \draw (0.01,0.01) grid (5+0.99,5+0.99);

  \fill[pattern=north east lines] (0,3) rectangle +(1,1);
    \fill[pattern=north east lines] (3,5) rectangle +(1,1);
    \fill[pattern=north east lines] (4,1) rectangle +(1,1);
    \fill[pattern=north east lines] (0,5) rectangle +(1,1);
    \fill[pattern=north east lines] (2,5) rectangle +(1,1);
    \fill[pattern=north east lines] (3,2) rectangle +(1,1);

    \filldraw (2,1) circle (6pt);
    \filldraw (3,3) circle (6pt);
    \filldraw (4,4) circle (6pt);
    \filldraw (5,2) circle (6pt);
  \filldraw (1,5) circle (6pt);

  \draw [->,thick] (3,3) -- (3.5,3.5);

\end{tikzpicture}
 \asymp
\begin{tikzpicture}[scale=0.35, baseline=(current bounding box.center)]

    \draw (0.01,0.01) grid (5+0.99,5+0.99);

  \fill[pattern=north east lines] (0,3) rectangle +(1,1);
    \fill[pattern=north east lines] (3,5) rectangle +(1,1);
    \fill[pattern=north east lines] (4,1) rectangle +(1,1);
    \fill[pattern=north east lines] (0,5) rectangle +(1,1);
    \fill[pattern=north east lines] (3,3) rectangle +(1,1);
    \fill[pattern=north east lines] (2,5) rectangle +(1,1);
    \fill[pattern=north east lines] (3,2) rectangle +(1,1);

    \filldraw (1,5) circle (6pt);
    \filldraw (2,1) circle (6pt);
  \filldraw (3,3) circle (6pt);
    \filldraw (4,4) circle (6pt);
    \filldraw (5,2) circle (6pt);

\end{tikzpicture}
\]
We can shade the box $\boks{0}{5}$ by choosing the leftmost (or the topmost
point) in the box, because none of the shaded boxes touching the lines $x=0$ or
$y=5$ will be extended by this. We can also shade the box $\boks{3}{3}$ by
choosing the rightmost (\emph{not} the topmost point!). This is because the
shaded boxes touching the lines $x=3$ and $y=3$ will either not be extended,
boxes $\boks{0}{3}$ and $\boks{3}{5}$, or are extended into a shaded area, box
$\boks{2}{5}$. However when trying to shade box $\boks{1}{0}$, box $\boks{4}{1}$
would have to be extended into a non-shaded area. Therefore box $\boks{1}{0}$
cannot be shaded.
\end{example}

We record one more new operation that preserves Wilf-equivalence in
Appendix~\ref{app:switch} since it is only useful for patterns of length three
or more.

\section{Wilf-classes}
\begin{definition}
A \emph{Wilf-subclass} is a set containing patterns of the same length which
are Wilf-equivalent. A \emph{Wilf-class} is a maximal Wilf-subclass.
\end{definition}

In this section we look at the Wilf-classification of mesh patterns of length
$1$ and $2$.
\newline

The number of mesh patterns of length $1$ is $2\cdot 2^3 = 16$. The operations
from above suffice to sort these $16$ patterns into $4$ Wilf-classes. Below is
one representative from each class.
\[
\pattern{scale = 1}{1}{1/1}{0/1}  \quad \pattern{scale = 1}{1}{1/1}{0/1,1/0} \quad \pattern{scale = 1}{1}{1/1}{0/0,0/1,1/1} \quad \pattern{scale = 1}{1}{1/1}{0/0,0/1,1/0,1/1}
\]
Occurrences of these patterns in a permutation are well-known. Each occurrence
of the first pattern in a permutation $\pi$ is a \emph{left-to-right maximum}.
An occurrence of the second pattern in a permutation $\pi$ is a \emph{strong
fixed point} in $\pi$.

\begin{observation}[Stanley~\cite{MR1442260}] \label{obs:genstrongfix}
For reference we record the generating function for the number of permutations
without strong fixed points is
\[
G(x)= \frac{F(x)}{1+xF(x)} \quad \text{where } F(x) = \sum_{n\geq0} n!x^n.
\]
\end{observation}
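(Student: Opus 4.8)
The plan is to prove the equivalent functional equation
\[
F(x)-G(x)=x\,F(x)\,G(x),
\]
which rearranges to $G(x)=F(x)/\bigl(1+xF(x)\bigr)$, by a direct combinatorial decomposition of the permutations that \emph{do} contain a strong fixed point. Recall that position $i$ is a strong fixed point of $\pi\in S_n$ exactly when $\pi(i)=i$ and $\pi$ maps $\{1,\dots,i-1\}$ onto itself (equivalently $\{i+1,\dots,n\}$ onto itself); this is precisely the condition imposed by the shaded boxes $\boks{0}{1}$ and $\boks{1}{0}$ of the length-$1$ pattern. In that situation $\pi=\alpha\oplus 1\oplus\beta$ is a direct sum, where $\alpha$ is the pattern of $\pi$ on positions $1,\dots,i-1$ and $\beta$ the pattern on positions $i+1,\dots,n$.

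First I would note that any $\pi$ possessing a strong fixed point has a \emph{largest} one, at some position $i$, and I would decompose $\pi=\alpha\oplus 1\oplus\beta$ with $\alpha\in S_{i-1}$, $\beta\in S_{n-i}$ accordingly. Choosing the largest strong fixed point is what makes this useful: $\beta$ can then have no strong fixed point of its own, since a strong fixed point of $\beta$ at internal position $k$ would lift to a strong fixed point of $\pi$ at position $i+k>i$. Conversely, for any $\alpha\in S_m$ and any $\beta\in S_j$ with no strong fixed point, the direct sum $\alpha\oplus 1\oplus\beta\in S_{m+1+j}$ has $m+1$ as its largest strong fixed point — it is visibly a strong fixed point, and any strictly larger one would descend to a strong fixed point of $\beta$. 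Hence $\pi\mapsto(\alpha,\beta)$ is a bijection between permutations of length $n$ with a strong fixed point and pairs $(\alpha,\beta)$ with $\alpha\in S_m$, $\beta\in S_j$ having no strong fixed point, and $m+1+j=n$.

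Writing $g_n$ for the number of permutations of length $n$ with no strong fixed point (so $G(x)=\sum g_n x^n$), the bijection yields $n!-g_n=\sum_{m+j=n-1}m!\,g_j$ for every $n$, which is exactly the coefficient-wise form of $F(x)-G(x)=x\,F(x)\,G(x)$; solving for $G$ gives the stated formula. A quick sanity check: $g_0=1$, $g_1=0$ (the one-point permutation $1$ has a strong fixed point), $g_2=1$, $g_3=3$, which agree with the expansion of $F/(1+xF)$. Since the identity is classical, I would keep the exposition brief, essentially just recording this bijection alongside the citation to Stanley~\cite{MR1442260}.

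The step needing the most care is the behaviour at the two ends of the decomposition: the empty permutation vacuously has no strong fixed point, so $\beta$ may be empty (the case $i=n$, giving the term $(n-1)!\,g_0$), whereas the singleton $1$ \emph{does} have a strong fixed point, so $g_1=0$; one must keep these conventions consistent so that the sum really runs over $m+j=n-1$ with $g_0=1$. Everything past that is a routine product of generating functions. I would also mention, as a remark, the alternative derivation via the classical $F(x)=1/(1-H(x))$ — with $H$ enumerating the $\oplus$-indecomposable permutations — together with the observation that a permutation has no strong fixed point iff none of its indecomposable blocks is a singleton, which simplifies to $G(x)=1/\bigl(1-(H(x)-x)\bigr)=F(x)/\bigl(1+xF(x)\bigr)$.
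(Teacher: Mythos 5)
Your argument is correct. The paper itself offers no proof of this observation --- it simply records the formula with a citation to Stanley --- so any complete derivation is necessarily ``different'' from what the paper does. Your route, decomposing a permutation containing a strong fixed point as $\alpha\oplus 1\oplus\beta$ at its \emph{largest} strong fixed point, with $\alpha$ arbitrary and $\beta$ strong-fixed-point free, is the standard one and is carried out correctly: the two directions of the bijection check out (a strong fixed point of $\beta$ would lift to a later strong fixed point of $\pi$, and conversely any strong fixed point of $\pi=\alpha\oplus 1\oplus\beta$ beyond position $m+1$ restricts to one of $\beta$), the resulting convolution $n!-g_n=\sum_{m+j=n-1}m!\,g_j$ is exactly $F-G=xFG$, and your boundary conventions ($g_0=1$, $g_1=0$) are the right ones and agree with the expansion of $F/(1+xF)$. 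The only thing your write-up buys beyond the paper is self-containedness; the alternative derivation you sketch via $\oplus$-indecomposable blocks is also valid and arguably cleaner, since ``no strong fixed point'' is precisely ``no singleton block'' in the decomposition $F=1/(1-H)$.
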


\noindent
Both left-to-right maxima and strong fixed points in connection to mesh patterns
were introduced by Br\"and\'en and Claesson~\cite{MR2795782}. All permutations
of length $n$ that contain the third pattern begin with $n$. There is only one
permutation that contains the last pattern, namely $1$.
\newline

The number of mesh patterns of length $2$ is $2^{10}=1024$. We start by creating
$1024$ Wilf-subclasses, one for each of the patterns. By using the operations
above many of these subclasses merge. More precisely, the operations reverse,
complement and inverse preserve Wilf-equivalence so after merging we arrive at
$168$ Wilf-subclasses. When further using the shading lemma this number
decreases to $87$ and finally when taking into account the toric shift and the
up-shift we arrive at $65$ Wilf-subclasses.

Below we further reduce this number to $56$ by combining some of the above
classes. We conjecture that the actual number of Wilf-classes is $46$.

\subsection{The Wilf-class containing classical patterns}
Here we will bring the number of Wilf-subclasses down to $62$ by merging four
Wilf-subclasses, represented by the patterns shown in
Table~\ref{tab:classical1}.
\begin{table}[ht]
{
\renewcommand{\arraystretch}{1.6}
\begin{tabular}{c|c|l|c}
\multirow{2}{*}{Nr.\ } & \multirow{2}{*}{Repr.\ $p$}  & \multirow{2}{*}{$|S_n(p)|$ for $n = 1, \dotsc, 9$}  & size of \\[-1.75ex]
&&&subclass\\
\hline
\hline
1 & $\pattern{scale = 0.6}{2}{1/1,2/2}{}$ & $1,1,1,1,1,1,1,1,1$ & $98$\\%/1
2 & $\pattern{scale = 0.6}{2}{1/1,2/2}{0/1,1/2}$ & $1,1,1,1,1,1,1,1,1$ & $8$\\%/11
3 & $\pattern{scale = 0.6}{2}{1/1,2/2}{0/0,0/1,1/2}$ & $1,1,1,1,1,1,1,1,1$ & $16$\\%/47
4 & $\pattern{scale = 0.6}{2}{1/1,2/2}{0/1,1/2,0/0,2/2}$ & $1,1,1,1,1,1,1,1,1$ & $4$\\%/134
\end{tabular}
}
\caption{Patterns Wilf-equivalent to classical patterns}
\label{tab:classical1}
\end{table}

Because of Observation~\ref{obs:shading} it suffices to show that a permutation
containing the classical pattern $12$ also contains the last pattern in
Table~\ref{tab:classical1}. Consider a permutation containing $12$. Out of all
the occurrences of $12$ we can choose the occurrence $\pi_i \pi_j$ such that
there are no other occurrence $\pi_{i'}\pi_{j'}$ with $i'<i$. If there
are multiple occurrences $\pi_i\pi_{j_1}, \pi_i\pi_{j_2}, \dots, \pi_i\pi_{j_k}$,
we can choose the one where the value $\pi_{j_\ell}$ is maximized. This
particular occurrence, $\pi_i\pi_{j_\ell}$, is an occurrence of
$\pattern{scale = 0.5}{2}{1/1,2/2}{0/1,1/2,0/0,2/2}$.

There are no other mesh patterns of length $2$ that have the same number of
avoiding permutations of length $n = 1, \dotsc, 9$ as the patterns in
Table~\ref{tab:classical1}, so this Wilf-class is comprised of exactly the
Wilf-subclasses in the table. This result is shown in
Table~\ref{tab:classical2}. It is worth noticing that over $1/10$th of length
$2$ mesh patterns lie in this class.
\begin{table}[ht]
{
\renewcommand{\arraystretch}{1.6}
\begin{tabular}{c|c|c|c}
\multirow{2}{*}{Repr.\ $p$} & \multirow{2}{*}{$|S_n(p)|$} & size of & \multirow{2}{*}{OEIS seq.} \\[-1.75ex]
&& class & \\
\hline
\hline
$\pattern{scale = 0.6}{2}{1/1,2/2}{}$ & $1$ & $126$ & A000012 \\
\end{tabular}}
\caption{The Wilf-class containing classical patterns}
\label{tab:classical2}
\end{table}

\subsection{Wilf-classes containing vincular patterns}
In this section we will deal with subclasses that contain vincular patterns and
subclasses that can be merged with them. These subclasses are shown in
Table~\ref{tab:vincular1}.  We will bring the number of subclasses down to $58$
by combining the first five subclasses into one Wilf-class.

\begin{table}[ht]
{
\renewcommand{\arraystretch}{1.6}
\begin{tabular}{c|c|l|c}
\multirow{2}{*}{Nr.\ } & \multirow{2}{*}{Repr.\ $p$}  & \multirow{2}{*}{$|S_n(p)|$ for $n = 1, \dotsc, 9$}  & size of \\[-1.75ex]
&&&subclass\\
\hline
\hline
5 & $\pattern{scale = 0.6}{2}{1/1,2/2}{0/0,0/1,0/2}$ & $1,1,2,6,24,120,720,5040,40320$ & $184$\\ %/6
6 & $\pattern{scale = 0.6}{2}{1/1,2/2}{0/1,1/1,2/0,2/2}$ & $1,1,2,6,24,120,720,5040,40320$ & $8$\\ %/184
7 & $\pattern{scale = 0.6}{2}{1/1,2/2}{0/0,0/2,1/1}$ & $1,1,2,6,24,120,720,5040,40320$ & $16$\\ %/104
8 & $\pattern{scale = 0.6}{2}{1/1,2/2}{0/0,0/1,1/0,1/1}$ & $1,1,2,6,24,120,720,5040,40320$ & $8$\\ %/58
9 & $\pattern{scale = 0.6}{2}{1/1,2/2}{0/1,1/1,1/2,2/1}$ & $1,1,2,6,24,120,720,5040,40320$ & $16$\\[0.5ex] %/48
\hline
10 & $\pattern{scale = 0.6}{2}{1/1,2/2}{0/0,0/1,0/2,2/0,2/1,2/2}$ & $1,1,3,12,60,360,2520,20160,181440$ & $80$\\[0.5ex] %/65
\hline
11 & $\pattern{scale = 0.6}{2}{1/1,2/2}{0/0,0/1,0/2,1/0,1/1,1/2,2/0,2/1,2/2 %/512
}$ & $1,1,6,24,120,720,5040,40320, 362880$ & $2$\\
\end{tabular}
}
\caption{Patterns Wilf-equivalent to vincular patterns}
\label{tab:vincular1}
\end{table}

\begin{proposition}\label{prop:4} Here we consider the first three subclasses
from Table~\ref{tab:vincular1}.
\begin{enumerate}

  \item A permutation of length $n$ avoids the pattern
  \[
    \pattern{scale = 0.7}{2}{1/1,2/2}{0/0,0/1,0/2}
  \]
  if and only if it starts with $n$. Therefore, the number
  of permutations of length $n$ that avoid this pattern is
  $(n-1)!$.

  \item \label{prop:4part2} A permutation avoids the pattern
  \[
    \pattern{scale = 0.7}{2}{1/1,2/2}{0/1,1/1,2/0,2/2}
  \]
  if and only if it ends with $1$. Therefore, the number
  of permutations of length $n$ that avoid this pattern is
  $(n-1)!$.

  \item A permutation of length $n$ avoids the pattern
  \[
    \pattern{scale = 0.7}{2}{1/1,2/2}{0/0,0/2,1/1}
  \]
  if and only if it starts with $n$. Therefore, the number
  of permutations of length $n$ that avoid this pattern is
  $(n-1)!$.

\end{enumerate}
\end{proposition}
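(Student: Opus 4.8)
My plan is to prove each of the three equivalences by directly translating the mesh constraints into a condition on a single distinguished entry of the permutation, then counting. In each case the pattern is $(12, R)$ with a full column or row shaded, so an occurrence of the underlying classical pattern $12$ is blocked exactly when certain "screening" entries are forbidden. The strategy is: first describe exactly what it means for a permutation to \emph{contain} the mesh pattern (find an increasing pair with an empty mesh), then negate this to get the avoidance condition, and finally recognize the avoidance condition as one of the familiar families ("starts with $n$" or "ends with $1$"), whose count is plainly $(n-1)!$.

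For part (1), the pattern $\pattern{scale=0.6}{2}{1/1,2/2}{0/0,0/1,0/2}$ has the entire left column shaded. An occurrence $\pi_i\pi_j$ with $i<j$ and $\pi_i<\pi_j$ requires no points strictly to the left of position $i$, i.e.\ $i=1$. So $\pi$ contains the pattern if and only if there is some $j>1$ with $\pi_1<\pi_j$, equivalently $\pi_1\neq n$. Hence $\pi$ avoids the pattern if and only if $\pi_1=n$, and there are $(n-1)!$ such permutations. Part (3) is essentially the same reasoning: the pattern $\pattern{scale=0.6}{2}{1/1,2/2}{0/0,0/2,1/1}$ also forbids points in the first column (boxes $\boks{0}{0}$, $\boks{0}{2}$) together with the box $\boks{1}{1}$; I would check that, given any increasing pair with nothing to its left, if the box $\boks{1}{1}$ between them contains a point one can re-choose the pair (take the occurrence with $j$ as small as possible, or with $\pi_j$ as small as possible among leftmost choices) so that $\boks{1}{1}$ becomes empty — so again containment is equivalent to $\pi_1\neq n$, and avoidance to $\pi_1=n$. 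Part (2) is the reverse-complement image of part (1): the pattern $\pattern{scale=0.6}{2}{1/1,2/2}{0/1,1/1,2/0,2/2}$ has the whole right column shaded (plus the middle-top box), so an occurrence needs $j=n$ and an $i<n$ with $\pi_i<\pi_n$, which fails for every permutation precisely when $\pi_n=1$; one can either argue this directly or simply invoke the reverse and complement symmetries already established, noting they send this pattern to the one in part (1) and "starts with $n$" to "ends with $1$".

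The only place needing a little care is the extra shaded box in parts (2) and (3) (the box $\boks{1}{1}$, respectively $\boks{1}{1}$-type box between the two pattern points): I must confirm that choosing an extremal occurrence (leftmost second point, or smallest second value) forces that box to be empty, so that it does not actually restrict which permutations contain the pattern beyond the first-column condition. I expect this to be the main (though minor) obstacle; once it is dispatched, the counting is immediate. In fact this is exactly the kind of re-choice argument already used for Table~\ref{tab:classical1}, so I would phrase it the same way, and alternatively one could note that the Shading Lemma (Lemma~\ref{lem:shading}) lets one pass between these patterns and the pure "full column" pattern of part (1), making all three coincident up to symmetry and reducing everything to the single clean statement "avoids $\iff$ starts with $n$".
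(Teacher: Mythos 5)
Parts (1) and (3) of your proposal are essentially sound: for the first pattern the fully shaded left column forces $i=1$, so containment is equivalent to $\pi_1\neq n$; for the third pattern, taking $i=1$ and $j$ minimal with $\pi_j>\pi_1$ empties the box $\boks{1}{1}$, so the same characterization holds. (The paper proves only part (2) and declares the others ``similar'', so here you are, if anything, more explicit.)

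Part (2), however, has a genuine gap. You have misread the mesh of $\pattern{scale = 0.5}{2}{1/1,2/2}{0/1,1/1,2/0,2/2}$: the box $\boks{2}{1}$ is \emph{not} shaded, so the right column is not fully shaded and an occurrence does not need $j=n$; the boxes that are shaded include $\boks{0}{1}$ and $\boks{1}{1}$, which together force the two values of an occurrence to be \emph{adjacent in size}. Your containment argument (``some $i<n$ with $\pi_i<\pi_n$'') therefore does not work: in $\pi=213$ the pair $(1,3)$ at positions $(2,3)$ is not an occurrence, because the letter $2$ lies in the region corresponding to $\boks{0}{1}$. The missing idea is exactly the paper's: take $a=\pi_n$ and $b=a-1$; adjacency in value empties $\boks{0}{1}$ and $\boks{1}{1}$, and $a$ being last empties $\boks{2}{0}$ and $\boks{2}{2}$. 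Your proposed shortcut via symmetry is also unavailable: the reverse--complement of the pattern in part (1) is $(12,\{\boks{2}{0},\boks{2}{1},\boks{2}{2}\})$, not the pattern of part (2), and indeed patterns 5 and 6 of Table~\ref{tab:vincular1} sit in \emph{different} Wilf-subclasses precisely because no composition of reverse, complement, inverse, the Shading Lemma, or the shift operations relates them. For the same reason your closing remark that the Shading Lemma makes all three patterns coincident up to symmetry cannot be right (e.g.\ adding $\boks{0}{1}$ to the third pattern violates condition~(2) of Lemma~\ref{lem:shading}, since both $\boks{0}{0}$ and $\boks{1}{1}$ are already shaded); the whole point of this proposition is to merge subclasses that the automatic machinery could not. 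Once the value-adjacent pair $(\pi_n-1,\pi_n)$ is used in part (2), the count $(n-1)!$ follows as you say.
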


\begin{proof}
We only prove part \eqref{prop:4part2} as the others are similar. Let $\pi$ be a
permutation that does not end with $1$. Let $a$ be the last element of $\pi$ and
let $b = a-1$, which is somewhere to the left of $a$ in $\pi$. It is easy to see
that the letters $ab$ form an occurrence of the pattern. Clearly if $\pi$ ends
with $1$ it can not contain the pattern.
\end{proof}

Recall that $[x^n]f(x)$ denotes the coefficient of $x^n$ in the power series
expansion of the function $f(x)$.

\begin{definition}\label{def:Eulerian}
The $n$-th Eulerian polynomial $E_n(x)$ can be defined by
\[
\sum_{k=0}^{\infty} (k+1)^n x^k = \frac{E_n(x)}{(1-x)^{n+1}},
\]
see for example~\cite{MR2502493}. It is well-known that the Eulerian numbers,
$T_{n,k} = [x^k]E_n$, can also be defined recursively by
\[
T_{n, k} = k \cdot T_{n-1, k} + (n-k+1) \cdot T_{n-1, k-1}, \quad T_{1, 1} = 1.
\]
\end{definition}

Instead of just enumerating avoiders of the fourth and fifth pattern in
Table~\ref{tab:bivincular1} we find the distribution of descents and use it to
find the enumeration.
\begin{proposition}\label{prop:6}
The distribution for the number of descents in permutations of length $n$
avoiding the pattern $p =\pattern{scale=0.5}{2}{1/1,2/2}{0/0,0/1,1/0,1/1}$ is
\[
\sum_{\pi \in S_n(\pattern{scale=0.4}{2}{1/1,2/2}{0/0,0/1,1/0,1/1})} x^{\des(\pi)}=xE_{n-1}(x),
\]
where $\des(\pi)$ is the number of descents in the permutation $\pi$. This implies
\[
\left| S_n\left(\pattern{scale=0.6}{2}{1/1,2/2}{0/0,0/1,1/0,1/1}\right) \right| = (n-1)!
\]
for all $n\geq 1$.
\end{proposition}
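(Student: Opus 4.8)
The plan is to reduce avoidance of $p$ to a transparent numerical condition, then to generate the avoiders by inserting the largest letter and track descents through that insertion, and finally to recognise the resulting recursion as the classical one for Eulerian polynomials. Throughout I use the convention of the generating function in Definition~\ref{def:Eulerian}, under which $E_m(x)=\sum_{\sigma\in S_m}x^{\des\sigma}$ and $E_m(1)=m!$; the statement to be proved then reads $f_n(x)=xE_{n-1}(x)$ for $n\ge2$, where $f_n(x):=\sum_{\pi\in S_n(p)}x^{\des\pi}$. For $\pi\in S_n$ and a position $b$, write $c_b=\#\{\,j<b:\pi_j<\pi_b\,\}$. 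Unwinding the requirements imposed by the four shaded boxes $\boks{0}{0}$, $\boks{0}{1}$, $\boks{1}{0}$, $\boks{1}{1}$, an occurrence of $p$ at positions $a<b$ is precisely a pair with $\pi_a<\pi_b$ all of whose letters in positions $<b$ other than $\pi_a$ exceed $\pi_b$; so an occurrence with right endpoint $b$ exists if and only if $c_b=1$. Hence $\pi\in S_n(p)$ iff $c_b\neq1$ for every $b$. Two easy consequences: taking $b=2$ forces $c_2=0$, so every avoider of length $\ge2$ begins with a descent $\pi_1>\pi_2$; and if $n$ occupies position $i$ then $c_i=i-1\neq1$, so $n$ never sits in position $2$. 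The first fact is what eventually produces the factor $x$ and makes the recursion close up.

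Deleting the letter $n$ from $\pi$ changes no $c_b$ of a surviving letter, so by the characterisation, $\pi\in S_n(p)$ iff $\pi$ arises from some $\pi'\in S_{n-1}(p)$ by inserting $n$ into one of the $n-1$ positions different from $2$; in particular $|S_n(p)|=(n-1)!$. To refine this I would number the gaps of $\pi'$ by $0,\dots,n-1$ from the left (gap $s$ meaning ``put $n$ into position $s+1$''), gap $1$ being forbidden. Inserting $n$ into gap $0$ adds one descent; into gap $n-1$ adds none; into an internal gap $s$ with $2\le s\le n-2$ it adds a descent exactly when position $s$ of $\pi'$ is an ascent and adds none when it is a descent. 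Since $\pi'$ avoids $p$, position $1$ of $\pi'$ is a descent, so among positions $2,\dots,n-2$ there are $\des(\pi')-1$ descents and $n-2-\des(\pi')$ ascents. Writing $d=\des(\pi')$ and summing $x^{\des}$ over all $n-1$ insertions of a fixed $\pi'$,
\[
  x^{d+1}+x^{d}+(d-1)x^{d}+(n-2-d)x^{d+1}=d\,x^{d}+(n-1-d)\,x^{d+1},
\]
so that
\[
  f_n(x)=\sum_{\pi'\in S_{n-1}(p)}\bigl(d\,x^{d}+(n-1-d)\,x^{d+1}\bigr),\qquad d=\des(\pi').
\]

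Now assume inductively $f_{n-1}(x)=xE_{n-2}(x)$ and set $g=f_{n-1}$. Since $\sum_{\pi'}x^{d}=g$ and $\sum_{\pi'}d\,x^{d}=xg'$, the display above gives $f_n=xg'+(n-1)xg-x^{2}g'=x(1-x)g'+(n-1)xg$. Substituting $g=xE_{n-2}$ and simplifying, $f_n(x)=x\bigl[(1+(n-2)x)E_{n-2}(x)+x(1-x)E_{n-2}'(x)\bigr]$, and the bracket equals $E_{n-1}(x)$ by the classical Eulerian recursion $E_m(x)=(1+(m-1)x)E_{m-1}(x)+x(1-x)E_{m-1}'(x)$ applied with $m=n-1$. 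The base case $n=2$ is immediate ($S_2(p)=\{21\}$, $f_2(x)=x=xE_1(x)$), and evaluating at $x=1$ recovers $|S_n(p)|=(n-1)!$.

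The only delicate point is the descent bookkeeping: besides getting the four cases of ``what happens to $\des$'' right, the crucial observation is that every avoider has its descent in position $1$, which is exactly what lets me express the numbers of usable ascent-gaps and descent-gaps as the clean linear quantities $n-2-d$ and $d-1$. Without this, the sum over $\pi'$ would depend on a statistic finer than $\des(\pi')$ and would not collapse to a relation involving only $f_{n-1}$ and its derivative. (An alternative would be to hunt for a bijection $S_n(p)\to S_{n-1}$ increasing $\des$ by one, but the recursive argument above looks like the cleanest route.)
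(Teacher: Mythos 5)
Your proof is correct and follows essentially the same route as the paper: characterize avoidance (every avoider begins with a descent and never has $n$ in position $2$), build avoiders of length $n$ by inserting $n$ into every gap of an avoider of length $n-1$ except the second, track the effect on descents, and match the resulting recursion with the Eulerian one. If anything your bookkeeping is the accurate version: the coefficient recursion your insertion argument yields, $R_{n,k}=kR_{n-1,k}+(n-k)R_{n-1,k-1}$, is the correct one, whereas the recursion printed in the paper, $R_{n,k}=(k-1)R_{n-1,k}+(n-k+1)R_{n-1,k-1}$, is shifted by one in $k$ (it would give $R_{3,1}=0$ even though $213$ avoids $p$ with one descent) and only reaches the right conclusion because the paper's stated Eulerian recursion carries the same shift; your use of the polynomial form $E_m=(1+(m-1)x)E_{m-1}+x(1-x)E_{m-1}'$ avoids this indexing pitfall.
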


\begin{proof}
In order to construct all permutations of length $n$ we can take each
permutation in $S_{n-1}$ and place $n$ in every possible position. Let $\pi '$
be a permutation obtained by adding the letter $n$ to $\pi$, where $\pi$ is a
permutation in $S_{n-1}$ which contains the pattern $p$. Then there exist $i <
j$ where the only letter lower than $\pi(j)$ in positions $1,2, \ldots j-1$ is
$\pi(i)$. From this it follows that we can add the letter $n$ to any position in
the permutation $\pi$ for $\pi'$ to still satisfy these conditions and therefore
contain the pattern $p$.

Now let $\pi$ be a permutation in $S_{n-1}$ that avoids $p$. Then for all $i <
j$ with $\pi(i) < \pi(j)$ there are at least two letters in places $1,2, \ldots
j-1$ lower than $\pi(j)$. In order to get a permutation of length $n$ that also
avoids $p$ we can add the letter $n$ in all positions except between the letters
$\pi(1)$ and $\pi(2)$. By adding the letter $n$ between the letters $\pi(1)$ and
$\pi(2)$ we produce an ascent where the letter $n$ has only one letter to the
left of it and therefore $\pi'$ would contain the pattern $p$. However by adding
the letter $n$ in all other positions does not produce an ascent or it produces
an ascent that has more than one letter to the left of it and therefore $\pi'$
avoids the pattern $p$. Since we are not allowed to add the letter $n$ between
the letters $\pi(1)$ and $\pi(2)$, it follows that all permutations in
$S_n(\pattern{scale=0.5}{2}{1/1,2/2}{0/0,0/1,1/0,1/1})$ start with a descent.

We let $B_n(x)$ be the distribution for the number of descents in permutations
of length $n$ avoiding $p$, i.e.,
\[
  B_n(x) = \sum_{\pi \in S_n(\pattern{scale=0.4}{2}{1/1,2/2}{0/0,0/1,1/0,1/1})} x^{\text{des} (\pi)}, \quad \text{and } R_{n,k} = [x^k]B_n(x).
\]
In order to construct a permutation $\pi$ of length $n$ with $k$ descents, we
have two choices. We can either take a permutation of length $n-1$ with $k$
descents and add $n$ to it without changing the number of descents or take a
permutation of length $n-1$ with $k-1$ descents and increase the number of
descents by one by adding $n$ to it. By adding $n$ between two letters making up
a descent we do not change the number of descents. If we add $n$ in any of the
other position we increase the number of descents by one. Above we have shown
that we can add the letter $n$ to a permutation of length $n-1$ that avoids $p$
in every position except between the first two letters of the permutation. We
have also shown that every permutation avoiding $p$ begins with a descent. From
this it follows that
\[
R_{n,k} = (k-1)R_{n-1,k} + (n-k+1)R_{n-1,k-1}.
\]
We claim that $B_n(x)=xE_{n-1}(x)$, or equivalently $R_{n,k}=T_{n-1,k-1}$. We
will prove this by induction. For $n=2$ we have $B_2(x)=x^1$ and $E_1(x)=x^0$,
so $R_{2,1}=1$ and $T_{1,0}=1$. Hence, the claim holds for the base case. Now we
assume that $R_{N,k}= T_{N-1, k-1}$ holds for all $N<n$ and all $k$. By
Definition~\ref{def:Eulerian}, we have
\begin{align*}
R_{n,k} &= (k-1)R_{n-1,k} + (n-k+1)R_{n-1,k-1}\\
&= (k-1)T_{n-2, k-1} + (n-k+1)T_{n-2,k-2} \tag{by induction hypothesis}\\
&= T_{n-1,k-1}.
\end{align*}
Thus, the claim holds for all $n$. It is well-known that
$\sum_{k=0}^{n-1}T_{n-1,k}=(n-1)!$ and therefore the number of permutations of
length $n$ avoiding p is $(n-1)!$.
\end{proof}

The proof of the next proposition is analogous to the previous one, but instead
of considering where $n$ can be added to a permutation we consider where $1$ can
be added (and the rest of the values raised by $1$).

\begin{proposition}\label{prop:7}
The distribution for the number of descents in permutations of length $n$
avoiding the pattern $p =\pattern{scale=0.5}{2}{1/1,2/2}{0/1,1/2,2/1,1/1}$ is
\[
\sum_{\pi \in S_n(\pattern{scale=0.4}{2}{1/1,2/2}{0/1,1/1,1/2,2/1})} x^{\des (\pi)}=xE_{n-1}(x).
\]
This implies
\[
\left| S_n\left(\pattern{scale=0.6}{2}{1/1,2/2}{0/1,1/1,1/2,2/1}\right) \right| = (n-1)!
\]
for all $n\geq 0$.
\end{proposition}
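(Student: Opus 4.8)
The plan is to run the proof of Proposition~\ref{prop:6} with the largest and the smallest letter interchanged, as the remark preceding the statement suggests. The first task is to describe, combinatorially, when a permutation contains
\[
  p = \pattern{scale = 0.6}{2}{1/1,2/2}{0/1,1/1,1/2,2/1}.
\]
Take a candidate occurrence $\pi_a\pi_b$, so $a<b$ and $\pi_a<\pi_b$. Over \emph{all} positions, the three shaded boxes $\boks{0}{1}$, $\boks{1}{1}$, $\boks{2}{1}$ exhaust the cells whose value lies strictly between $\pi_a$ and $\pi_b$; they are therefore simultaneously empty exactly when $\pi$ has no letter of value strictly between $\pi_a$ and $\pi_b$, i.e.\ when $\pi_b=\pi_a+1$. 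Given that, the remaining box $\boks{1}{2}$ forbids a letter of value larger than $\pi_b$ between positions $a$ and $b$; since a letter strictly between $a$ and $b$ is none of $\pi_a$, $\pi_b$, or a value strictly between them, this forces every such letter to be smaller than $\pi_a$. Hence $\pi$ contains $p$ if and only if some value $v$ occurs to the left of $v+1$ in $\pi$ with all letters between them smaller than $v$; in particular $v=1$ yields an occurrence precisely when the letter $1$ is immediately followed by the letter $2$.

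Next I would reuse the insertion bijection of Proposition~\ref{prop:6} in ``small letter'' form: every $\sigma\in S_n$ arises uniquely from a pair $(\pi,g)$ with $\pi\in S_{n-1}$ and $g$ one of the $n$ gaps of $\pi$, by raising every letter of $\pi$ by one and inserting the letter $1$ into $g$. If $\pi$ contains $p$, the witnessing pair still witnesses $p$ in $\sigma$ for every $g$, because the freshly inserted $1$ lies below all other letters and cannot spoil the ``all letters between them are smaller'' condition. If $\pi$ avoids $p$, then by the characterization above the only occurrence $\sigma$ can gain uses the values $1$ and $2$ and forces $1$ immediately left of $2$; so $\sigma$ avoids $p$ exactly when $g$ is not the gap directly preceding the letter $2$. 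Thus $S_n(p)$ is in bijection with the pairs $(\pi,g)$ with $\pi\in S_{n-1}(p)$ and $g$ not that one forbidden gap, which gives $|S_n(p)|=(n-1)\,|S_{n-1}(p)|=(n-1)!$ by induction from $|S_1(p)|=1$.

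For the descent distribution I would record the effect of inserting the letter $1$ on $\des$: placed at the front of the word, or inside an existing descent, it leaves $\des$ unchanged; placed at the end, or inside an existing ascent, it raises $\des$ by one. The unique forbidden gap, directly to the left of the letter $2$, is always of the first (descent-preserving) kind: if $2$ is the first letter it is the front gap, and otherwise the letter just left of $2$ has value at least $3$ and the gap sits inside a descent. In particular, unlike in Proposition~\ref{prop:6}, no separate structural fact about the avoiders is needed here. Counting the admissible insertions by their effect on $\des$ now produces, for $R_{n,k}:=[x^k]\sum_{\pi\in S_n(p)}x^{\des(\pi)}$, the same recurrence --- with the same base case $R_{2,1}=1$, since $S_2(p)=\{21\}$ --- as in the proof of Proposition~\ref{prop:6}; the induction there, compared against Definition~\ref{def:Eulerian}, then gives
\[
  \sum_{\pi\in S_n(p)}x^{\des(\pi)} = xE_{n-1}(x),
\]
and $x=1$ recovers $|S_n(p)|=(n-1)!$.

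The only real work sits in the first paragraph --- reading the exact avoidance condition off the four-box mesh, and checking that the single forbidden insertion slot is always descent-preserving so that the resulting recurrence coincides with that of Proposition~\ref{prop:6}. Everything afterward is a transcription of the proof of Proposition~\ref{prop:6} under the substitutions ``$n\leftrightarrow 1$'' and ``front $\leftrightarrow$ end.''
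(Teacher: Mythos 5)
Your proof is correct and is exactly the route the paper intends: its entire proof of Proposition~\ref{prop:7} is the remark that one repeats the argument of Proposition~\ref{prop:6} while inserting the letter $1$ instead of $n$, and you have carried out precisely that adaptation (the containment characterization via consecutive values $v,v+1$, the single forbidden gap immediately before the letter $2$, and the observation that this gap is always descent-preserving). One caveat: the recurrence your insertion count actually produces is $R_{n,k}=k\,R_{n-1,k}+(n-k)\,R_{n-1,k-1}$, which is the correct one and is indeed satisfied by $[x^{k-1}]E_{n-1}(x)$, whereas the recurrence displayed in the proof of Proposition~\ref{prop:6} reads $(k-1)R_{n-1,k}+(n-k+1)R_{n-1,k-1}$ because that proof misclassifies the end gap as descent-increasing --- so you should rely on your own gap count rather than transcribing that displayed recurrence verbatim.
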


% More detailed proof of prop:7: To construct all permutations of length $n$ we take each permutation $\pi$ in $S_{n-1}$ and place the letter $1$ in every position in $\pi$ and add $1$ to the remaining letters. Now let us take a permutation $\pi$ in $S_{n-1}$ which contains $p$, that is, there exist $1\leq i<j\leq n$ such that there does not exist a letter $k$ such that $\pi(i)<k<\pi (j)$, and we do not have a letter $\ell$ such that $\ell > \pi(i)$. We can place $1$ in any position in $\pi$ for these conditions to be satisfied for the new permutation, $\pi '$.

% Now let us take a permutation $\pi$ in $S_{n-1}$ which avoids $p$, that is, for all $1\leq i<j\leq n$ with $\pi(i)<\pi(j)$ we have a letter $k$ where $\pi(i)<k<\pi (j)$, or a letter $\ell$ such that $\ell > \pi(i)$. If we place  $1$ in front of $2$ in $\pi$, $\pi'$ contains an ascent with the letters $12$, and therefore clearly contains the pattern. If $1$ is placed in every other position in $\pi$, a non-inversion is created by the letter $1$ and any letter greater than $2$. Thus the conditions will be satisfied in $\pi '$ and $\pi'$ will avoid the pattern. By similar arguments as in Proposition~\ref{prop:6}, we find this pattern gives the same distribution for number of descents as the pattern in the previous proposition. Hence the number of permutations of length $n$ avoiding the pattern $p$ is $(n-1)!$.

This takes care of merging the first five Wilf-subclasses into one Wilf-class.
The remaining propositions in this section give the enumeration of the remaining
patterns. Their proofs are left for the reader.

% Pattern class 65
\begin{proposition}
  A permutation contains the pattern
  \[
    \pattern{scale = 0.7}{2}{1/1,2/2}{0/0,0/1,0/2,2/0,2/1,2/2}
  \]
  if and only if its first letter is smaller than its last.
  This happens for precisely half of the permutations. Thus
  the the number of permutations of length $n$ avoiding the pattern is
  $n!/2$ for $n \geq 2$.
\end{proposition}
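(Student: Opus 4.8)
The plan is to characterize containment of the mesh pattern $\pattern{scale = 0.6}{2}{1/1,2/2}{0/0,0/1,0/2,2/0,2/1,2/2}$ directly, by reading off what the shaded boxes force. The underlying classical pattern is $12$, so an occurrence is a pair of positions $i<j$ with $\pi_i<\pi_j$. The shaded column to the left of the first point (boxes $\boks00,\boks01,\boks02$) says the point playing the role of ``$1$'' must be in the leftmost position, i.e.\ $i=1$. The shaded column to the right of the second point (boxes $\boks20,\boks21,\boks22$) says the point playing the role of ``$2$'' must be in the rightmost position, i.e.\ $j=n$. Hence a permutation $\pi\in S_n$ contains the pattern precisely when $\pi_1<\pi_n$, and avoids it precisely when $\pi_1>\pi_n$ (or $n\le 1$, where there is no such pair). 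I would state this equivalence first, with the one-line justification that any occurrence of $12$ that is not of the form $\pi_1\pi_n$ has a point in one of the two shaded columns, and conversely $\pi_1\pi_n$ itself is an occurrence of the mesh pattern whenever $\pi_1<\pi_n$.

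Next I would count. For $n\ge 2$ the map $\pi\mapsto\pi^r$ (reverse) is an involution on $S_n$ that swaps $\pi_1$ and $\pi_n$, hence exchanges the set $\{\pi:\pi_1<\pi_n\}$ with $\{\pi:\pi_1>\pi_n\}$; since $\pi_1\ne\pi_n$ always, these two sets partition $S_n$ and therefore each has size $n!/2$. So the number of avoiders of length $n$ is $n!/2$ for $n\ge 2$.

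There is essentially no obstacle here; the only thing to be a little careful about is the small cases. For $n=0$ and $n=1$ there are no two positions to compare, so every permutation vacuously avoids the pattern, giving $1$ avoider rather than $n!/2$ — which is why the statement is restricted to $n\ge 2$. For $n\ge 2$ the reverse-involution argument is clean and complete. I would present the characterization, then the reversal bijection, and conclude.
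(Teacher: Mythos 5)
Your proof is correct and matches the argument the paper intends (the paper explicitly leaves the proof of this proposition to the reader, and its statement already contains the characterization $\pi_1<\pi_n$ together with the ``precisely half'' observation). Your reading of the two fully shaded columns as forcing $i=1$ and $j=n$, followed by the reversal involution, is exactly the natural route, and your care with the $n\le 1$ cases is consistent with the stated restriction to $n\ge 2$.
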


% Pattern class 512
\begin{proposition}
  The only permutation that contains the pattern
  \[
    \pattern{scale = 0.7}{2}{1/1,2/2}{0/0,0/1,0/2,1/0,1/1,1/2,2/0,2/1,2/2}
  \]
  is $12$. Therefore the number of permutations of length $n$
  that avoid the pattern is $n!$ for $n \geq 3$.
\end{proposition}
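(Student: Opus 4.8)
The plan is to exploit the fact that this length-$2$ mesh pattern is \emph{maximally shaded}: writing it as $p=(12,R)$, the shading set is $R=\dbrac{0,2}\times\dbrac{0,2}$, i.e.\ all nine boxes $\boks{a}{b}$ with $0\le a,b\le 2$ are shaded. Recall the occurrence condition for a mesh pattern: a permutation $\pi\in S_n$ contains $p$ if and only if there are positions $i<j$ with $\pi_i<\pi_j$ (an occurrence of the classical pattern $12$) such that, drawing the vertical lines $x=i$, $x=j$ and the horizontal lines $y=\pi_i$, $y=\pi_j$, none of the nine open regions into which these four lines cut the plane contains a point of the plot of $\pi$.

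The key (and essentially only) step is then the following observation. Those nine open regions are exactly the regions corresponding to the nine boxes $\boks{a}{b}$, every one of which lies in $R$. Now take any point $(t,\pi_t)$ of $\pi$ other than $(i,\pi_i)$ and $(j,\pi_j)$. Since positions and values of a permutation are distinct, $t\notin\{i,j\}$ and $\pi_t\notin\{\pi_i,\pi_j\}$, so this point lies strictly off all four lines, hence in the interior of one of the nine regions — a shaded box. Thus the occurrence condition forces $\pi$ to have no point other than $(i,\pi_i)$ and $(j,\pi_j)$, so $n=2$; and since $\pi_i<\pi_j$ we get $\pi=12$. Conversely $12$ does contain $p$: its only increasing subsequence is $12$ itself and there are no further points, so every shading constraint is satisfied vacuously.

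Putting the two directions together, $12$ is the unique permutation that contains $p$; consequently, for every $n\ge 3$ all $n!$ permutations of length $n$ avoid $p$, i.e.\ $\lvert S_n(p)\rvert=n!$. I do not expect any real obstacle here — the argument is a one-line consequence of the occurrence definition — the only point worth stating carefully is the strictness $t\ne i,j$ and $\pi_t\ne\pi_i,\pi_j$, which is what guarantees that any extra point falls in the interior of one of the nine boxes rather than on one of the delimiting lines.
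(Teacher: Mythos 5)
Your proof is correct: since all nine boxes of the mesh are shaded, any occurrence of the underlying $12$ forces every other point of the permutation (which necessarily lies strictly inside one of the nine regions) to violate a shading constraint, so only $\pi=12$ can contain the pattern. The paper leaves this proof to the reader, and your argument is exactly the intended one-line consequence of the occurrence definition for a fully shaded mesh.
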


Propositions~\ref{prop:4}, \ref{prop:6} and~\ref{prop:7} show that the first
five subclasses of Table~\ref{tab:vincular1} have the same enumeration.

\begin{table}[hb]
{
\renewcommand{\arraystretch}{1.6}
\begin{tabular}{c|c|c|c}
\multirow{2}{*}{Repr.\ $p$} & \multirow{2}{*}{$|S_n(p)|$} & size of & \multirow{2}{*}{OEIS seq.} \\[-1.75ex]
&& class & \\
\hline
\hline
$\pattern{scale = 0.6}{2}{1/1,2/2}{0/0,0/1,0/2}$ & $(n-1)!$ & $232$ & A000142 \\[0.5ex]
\hline
$\pattern{scale = 0.6}{2}{1/1,2/2}{0/0,0/1,0/2,2/0,2/1,2/2}$ & $n!/2$, $n \geq 2$ & $80$ & A001710 \\[0.5ex]
\hline
$\pattern{scale = 0.6}{2}{1/1,2/2}{0/0,0/1,0/2,1/0,1/1,1/2,2/0,2/1,2/2}$ & $n!$, $n \geq 3$ & $2$ & A000142 ($n \geq 3$)\\
\end{tabular}}
\caption{Wilf-classes containing vincular patterns}
\label{tab:vincular2}
\end{table}

\subsection{Wilf-classes containing bivincular patterns}
% Pattern classes 281, 303, 503 containing a bivincular pattern
% Pattern class 67 has the same number sequence as 281
Here we will deal with Wilf-subclasses that contain bivincular patterns and
other subclasses that can be merged with them. The subclasses are shown in
Table~\ref{tab:bivincular1}. We will bring the number of subclasses down to $57$
by combining the last two subclasses.

\begin{table}[ht]
{
\renewcommand{\arraystretch}{1.6}
\begin{tabular}{c|c|l|c}
\multirow{2}{*}{Nr.\ } & \multirow{2}{*}{Repr.\ $p$}  & \multirow{2}{*}{$|S_n(p)|$ for $n = 1, \dotsc, 9$}  & size of \\[-1.75ex]
&&&subclass\\
\hline
\hline
12 & $\pattern{scale = 0.6}{2}{1/1,2/2}{0/0,0/1,0/2,1/0,2/0}$ & $1,1,4,18,96,600,4320,35280, 322560$ & $56$\\[0.5ex] %/303
\hline
13 & $\pattern{scale = 0.6}{2}{1/1,2/2}{0/0,0/1,0/2,1/0,1/2,2/0,2/1,2/2}$ & $1, 1, 5, 22, 114, 696, 4920, 39600, 357840$ & $18$\\[0.5ex] %/503
\hline
14 & $\pattern{scale = 0.6}{2}{1/1,2/2}{0/1,1/1,1/2,1/0,1/2,2/1}$ & $1, 1, 3, 11, 53, 309, 2119, 16687, 148329$ & $2$\\ %/281
15 & $\pattern{scale = 0.6}{2}{1/1,2/2}{0/1,0/2,1/0,1/1,1/2}$ & $1, 1, 3, 11, 53, 309, 2119, 16687, 148329$ & $32$\\ %/67
\end{tabular}
}
\caption{Patterns Wilf-equivalent to bivincular patterns}
\label{tab:bivincular1}
\end{table}

Formulas for the number of permutations avoiding patterns in the first three
subclasses were found by Parviainen~\cite{RB}. Computer experiments show that
these subclasses can not be enlarged further. He also shows that the number of
permutations of length $n$ that avoid patterns in the third subclass is
\[
\sum_{k=0}^n (-1)^k (n-k+1) \frac{n!}{k!}.
\]
The following proposition shows that the last subclass in the table can be
merged with the third one.

\begin{proposition}\label{prop:10}
A permutation $\pi$ avoids the pattern
$p = \pattern{scale = 0.5}{2}{1/1,2/2}{0/1,0/2,1/0,1/1,1/2}$ if and only if each
ascent in $\pi$ is the $12$ of a $312$ pattern or the $13$ of a $213$ pattern,
or both. If
\[
\left| S_n\left(\pattern{scale = 0.6}{2}{1/1,2/2}{0/1,0/2,1/0,1/1,1/2}\right) \right| = a_n
\]
then $a_n = (n-1)a_{n-1} +(n-2)a_{n-2}$ and $a_0 = a_1 = 1$.
Furthermore
\[
a_n = \sum_{k=0}^n (-1)^k (n-k+1) \frac{n!}{k!}.
\]
\end{proposition}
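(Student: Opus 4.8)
The plan is to first translate avoidance of $p$ into a statement about left-to-right maxima, and then to enumerate the avoiders through the record structure of a permutation.

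\emph{Step 1: unwinding the mesh.} Consider an occurrence of the classical pattern $12$ in $\pi$, say at positions $i<j$ with $\pi_i<\pi_j$. The three boxes $\boks{1}{0}$, $\boks{1}{1}$, $\boks{1}{2}$ of $p$ — the whole middle column — are empty in this occurrence if and only if no entry of $\pi$ lies strictly between positions $i$ and $j$, i.e.\ $j=i+1$; and the two boxes $\boks{0}{1}$, $\boks{0}{2}$ are empty if and only if no entry to the left of position $i$ exceeds $\pi_i$, i.e.\ $\pi_i$ is a left-to-right maximum of $\pi$. Hence $\pi$ contains $p$ precisely when $\pi$ has an adjacent ascent $\pi_i<\pi_{i+1}$ with $\pi_i$ a left-to-right maximum. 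For such an ascent there is an $\ell<i$ with $\pi_\ell>\pi_i$, and since $\pi_\ell\neq\pi_{i+1}$ either $\pi_\ell>\pi_{i+1}$, so that $\pi_\ell\pi_i\pi_{i+1}$ is a $312$ with $\pi_i\pi_{i+1}$ its ``$12$'', or $\pi_i<\pi_\ell<\pi_{i+1}$, so that $\pi_\ell\pi_i\pi_{i+1}$ is a $213$ with $\pi_i\pi_{i+1}$ its ``$13$''. Negating, $\pi$ avoids $p$ if and only if every ascent of $\pi$ is the $12$ of a $312$ or the $13$ of a $213$ (or both) — this is the first assertion — and equivalently if and only if no two consecutive positions of $\pi$ are both left-to-right maxima (a left-to-right maximum immediately followed by a larger entry produces a second left-to-right maximum, and conversely).

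\emph{Step 2: from avoiders to record sets.} Write $a_n=|S_n(p)|$. By Step 1, $a_n$ is the number of permutations of $[n]$ whose set of record positions (positions of left-to-right maxima) contains no two consecutive integers. Here I would use the classical fact that the number of permutations of $[n]$ whose record set is exactly $S$ equals $\prod_{j\in[n]\setminus S}(j-1)$: recording for each $j$ the statistic $c_j=\#\{\,i<j:\pi_i>\pi_j\,\}\in\{0,\dots,j-1\}$ gives a bijection $S_n\to\prod_{j=1}^n\{0,\dots,j-1\}$ under which position $j$ is a record exactly when $c_j=0$. Thus
\[
  a_n=\sum_{\substack{S\subseteq[n]\\ S\text{ has no two consecutive elements}}}\ \prod_{j\in[n]\setminus S}(j-1),
\]
and since the $j=1$ factor is $0$, every term with $1\notin S$ drops out, so the fact that a record set always contains $1$ is automatic.

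\emph{Step 3: the recurrence.} Split this sum according to whether $n\in S$. If $n\notin S$, pull out the factor $n-1$ coming from position $n$; what remains is the same sum over subsets of $[n-1]$ with no two consecutive elements, contributing $(n-1)a_{n-1}$. If $n\in S$ then $n-1\notin S$, so we pull out the factor $n-2$ from position $n-1$ and the factor $1$ from position $n$; what remains is the sum over subsets of $[n-2]$ with no two consecutive elements, contributing $(n-2)a_{n-2}$. Hence $a_n=(n-1)a_{n-1}+(n-2)a_{n-2}$ for $n\ge 2$, while $a_0=a_1=1$ are read off directly (the empty set for $n=0$, and $S=\{1\}$ for $n=1$). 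These are exactly the recurrence and initial values of the sequence Parviainen obtained for the third subclass of Table~\ref{tab:bivincular1}, so subclasses $14$ and $15$ have the same enumeration and merge; the displayed closed form then follows from the recurrence by a routine induction (it is Parviainen's expression, up to the shift of length conventions).

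The main obstacle is Step 1: one must keep careful track of which shaded boxes impose which constraint on an occurrence, and, in particular, spot that the clean reformulation is ``no two adjacent left-to-right maxima'' — after that, Step 3 is immediate. If one prefers not to invoke the record-counting fact of Step 2, the same recurrence can be obtained by conditioning on the position $m$ of the entry $n$: the block before $n$ must be a permutation with no two adjacent records whose last entry is not a record, the block after $n$ is unrestricted, and eliminating the auxiliary count of such ``terminal-non-record'' permutations again yields $a_n=(n-1)a_{n-1}+(n-2)a_{n-2}$.
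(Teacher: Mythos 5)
Your argument is correct, and it reaches the recurrence by a genuinely different route from the paper's. The paper, which explicitly leaves the ascent characterization and the closed form to the reader, works directly with the avoiders: it partitions $A_n$ according to the last letter $k$, checks that deleting/appending a last letter $k<n$ gives a bijection $A_{n,k}\leftrightarrow A_{n-1}$ while appending $n$ gives $A_{n,n}\leftrightarrow A_{n-1}\setminus A_{n-1,n-1}$, computes the auxiliary count $a_{n,n}=(n-2)a_{n-2}$, and eliminates $a_{n-1,n-1}$ to obtain $a_n=(n-1)a_{n-1}+(n-2)a_{n-2}$. You instead make explicit the reformulation that $\pi$ avoids $p$ exactly when no two consecutive positions are both left-to-right maxima --- the same observation that underlies the paper's case $k=n$, but which you also use to supply the omitted ascent characterization --- and then combine the classical product formula $\prod_{j\in[n]\setminus S}(j-1)$ for the number of permutations with record set $S$ with a split of the sum over sparse sets $S$ according to whether $n\in S$. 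Your route yields a refinement the paper's does not (the number of avoiders with any prescribed admissible record set), at the price of importing the record-set formula; the paper's is more self-contained and is the template reused for Propositions~\ref{prop:8} and~\ref{prop:9}. One caveat, which concerns the statement rather than your proof: the displayed closed form $\sum_{k=0}^n(-1)^k(n-k+1)\frac{n!}{k!}$ evaluates to $1,1,3,11,53,\dots$ at $n=0,1,2,3,4$ and hence equals $a_{n+1}$ under the stated recurrence and initial conditions, so it does not literally ``follow from the recurrence by a routine induction''; your hedge about a shift of length conventions is exactly the right instinct, and (like the paper, which also omits this) you have not actually established that formula.
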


\begin{proof} We leave the characterization in terms of ascents to the reader,
as well as the last formula for $a_n$. Let $A_n$ be the set of all permutations
that avoid $p$ and $a_n$ be the size of $A_n$. Now, let $A_{n,k}=\{\pi\in
A(n) \colon \pi(n)=k\}$. For $k\neq n$ we define a mapping $\varphi_k:A_{n,k}\to
A_{n-1}$ such that for a permutation $\pi$, the mapping removes $k$ from $\pi$
and then subtracts $1$ from all letters larger than $k$. Then we can also define
$\varphi_k^{-1}: A_{n-1}\to A_{n,k}$ to be the mapping that appends $k$ to the
end of a permutation $\pi$ and then adds $1$ to all letters that are equal to
$k$ or larger. For $k=n$ we also have a mapping $\varphi_n: A_{n,n} \to
A_{n-1}\setminus A_{n-1,n-1}$ such that for a permutation $\pi$ where the last
letter is $n$, the mapping removes $n$. The range of the mapping is
$A_{n-1}\setminus A_{n-1,n-1}$ because if $\varphi(\pi)(n)=n-1$ then $\pi$ would
end with $(n-1)n$ which is an occurrence of the pattern $p$. It is easy to see
that the inverse is $\varphi_{n}^{-1}= A_{n-1}\setminus A_{n-1,n-1}\to A_{n,n}$;
the mapping that appends $n$ to the end of a permutation.

As explained above, in order to construct all permutations of length $n$
avoiding the pattern $A_n$, we can append $n$ at the end of all permutations of
length $n-1$ except for those ending with the letter $n-1$. Hence,
\[
a_n = na_{n-1} - a_{n-1,n-1}.
\]
From the mappings we get $a_{n,n} = a_{n-1} - a_{n-1,n-1}$ which gives
$a_{n-1,n-1} = a_{n-1} - a_{n,n}$. We also have $a_{n,n} = (n-2)a_{n-2}$ since
for producing a permutation from $A_{n,n}$ we take a permutation in $A_{n-2}$
and append a letter from the set $\{1,2,\ldots,n-2\}$ to it. Lastly we append
$n$ at the end of the permutation. Therefore,
\begin{align*}
a_n &= na_{n-1} - (a_{n-1} - a_{n,n})\\
&= na_{n-1} - a_{n-1} + (n-2)a_{n-2}\\
&= (n-1)a_{n-1} + (n-2)a_{n-2},
\end{align*}
which is what we wanted to prove.
\end{proof}

The results for the bivincular Wilf-classes are shown in
Table~\ref{tab:bivincular2}.

\begin{table}[ht]
{
\renewcommand{\arraystretch}{1.6}
\begin{tabular}{c|c|c|c}
\multirow{2}{*}{Repr.\ $p$} & \multirow{2}{*}{$|S_n(p)|$} & size of & \multirow{2}{*}{OEIS seq.} \\[-1.75ex]
&& class & \\
\hline
\hline
$\pattern{scale=0.6}{2}{1/1,2/2}{0/0,0/1,0/2,1/0,2/0}$             & $n! - (n-1)!$ ($n \geq 2$)                  & $56$ & A094258 \\[0.5ex]
\hline
$\pattern{scale=0.6}{2}{1/1,2/2}{0/0,0/1,0/2,1/0,1/2,2/0,2/1,2/2}$ & $n! - (n-2)!$ ($n \geq 2$)                  & $18$ & A213167 ($n \geq 2$) \\[0.5ex]
\hline
$\pattern{scale=0.6}{2}{1/1,2/2}{0/1,1/1,1/2,1/0,1/2,2/1}$         & $\sum_{k=0}^n (-1)^k (n-k+1) \frac{n!}{k!}$ & $34$ & A000255\\
\end{tabular}}
\caption{Wilf-classes containing bivincular patterns}
\label{tab:bivincular2}
\end{table}

\subsection{Enumerated Wilf-classes, not containing bivincular patterns}
In this section we provide formulas for the enumeration of the Wilf-subclasses
shown in Table~\ref{tab:notbivincwithseq1}. The first seven subclasses have
unique enumeration sequences for $n\leq 9$ and can therefore not be enlarged
further. The last two subclasses are shown to have the same enumeration in
Propositions~\ref{prop:8} and~\ref{prop:9}. They therefore merge into one
Wilf-class. Hence, the number of Wilf-classes is decreased from $57$ to $56$.
\begin{table}[ht]
{
\renewcommand{\arraystretch}{1.6}
\begin{tabular}{c|c|l|c}
\multirow{2}{*}{Nr.\ } & \multirow{2}{*}{Repr.\ $p$}  & \multirow{2}{*}{$|S_n(p)|$ for $n = 1, \dotsc, 9$}  & size of \\[-1.75ex]
&&&subclass\\
\hline
\hline
16 & $\pattern{scale = 0.6}{2}{1/1,2/2}{0/1,2/0,1/0,0/2}$ & $1, 1, 4, 17, 91, 574, 4173, 34353, 316012$ & $60$\\[0.5ex] %/178
\hline
17 & $\pattern{scale = 0.6}{2}{1/1,2/2}{0/1,1/2,0/0,2/0,1/0,0/2,2/1}$ & $1, 1, 5, 21, 110, 677, 4817, 38956, 353237$ & $8$\\[0.5ex] %/468
\hline
18 & $\pattern{scale = 0.6}{2}{1/1,2/2}{0/0,0/1,0/2,1/2,2/0,2/2}$ & $1, 1, 3, 13, 70, 446, 3276, 27252, 253296$ & $84$\\[0.5ex] %/142
\hline
19 & $\pattern{scale = 0.6}{2}{1/1,2/2}{0/1,0/2,1/1,1/2,2/0,2/2}$ & $1, 1, 3, 14, 80, 528, 3948, 33072, 307584$ & $16$\\[0.5ex] %/387
\hline
20 & $\pattern{scale = 0.6}{2}{1/1,2/2}{0/0,0/1,0/2,1/1,1/2,2/0,2/1}$ & $1, 1, 4, 19, 104, 656, 4728, 38508, 350592$ & $24$\\[0.5ex] %/469
\hline
21 & $\pattern{scale = 0.6}{2}{1/1,2/2}{0/1,1/2,0/0,2/0,2/2}$ & $1, 1, 2, 8, 47, 332, 2644, 23296, 225336$ & $4$\\[0.5ex] %/266
\hline
22 & $\pattern{scale = 0.6}{2}{1/1,2/2}{0/1,1/2,0/0,2/0,2/2,1/1}$ & $1, 1, 3, 15, 89, 594, 4434, 36892, 340308$ & $4$\\[0.5ex] %/400
\hline
23 & $\pattern{scale = 0.6}{2}{1/1,2/2}{0/0,0/2,1/0,1/1,1/2}$ & $1, 1, 2, 7, 33, 191, 1304, 10241, 90865$ & $16$\\ %/163
24 & $\pattern{scale = 0.6}{2}{1/1,2/2}{0/0,0/1,1/0,1/1,1/2}$ & $1, 1, 2, 7, 33, 191, 1304, 10241, 90865$ & $16$\\ %/135
\end{tabular}
}
\caption{Wilf-subclasses not containing bivincular patterns, with a known
counting sequence}
\label{tab:notbivincwithseq1}
\end{table}

\begin{proposition}\label{prop:12}
The number of permutations of length $n$ that avoid the pattern
$p = \pattern{scale = 0.5}{2}{1/1,2/2}{0/1,2/0,1/0,0/2}$ is
\[
n \left[ x^n \right] \log\left(1+\sum_{n\geq 1}(n-1)!x^n\right).
\]
\end{proposition}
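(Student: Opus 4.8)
The plan is to identify the pattern $p = \pattern{scale = 0.5}{2}{1/1,2/2}{0/1,2/0,1/0,0/2}$ with a constraint that can be read off combinatorially, and then translate that constraint into a structural decomposition of the avoiding permutations whose exponential generating function is the logarithm appearing in the statement. First I would decode what it means for a permutation $\pi$ to contain this particular mesh pattern: the underlying classical pattern is $12$, the point $u$ corresponding to $1$ sits in position/value $(1,1)$ and $v$ corresponding to $2$ in $(2,2)$, and the shaded boxes are $\boks{0}{1}$, $\boks{0}{2}$, $\boks{1}{0}$, $\boks{2}{0}$. So an occurrence $\pi_i < \pi_j$ (with $i < j$) is an occurrence of $p$ precisely when there is no point strictly to the left of $u$ (the column $\boks{0}{\cdot}$ in the rows of and above $u$, together with the requirement that nothing with smaller value than $u$ lies to its left or right below it) — I would carefully work out that this says $\pi_i$ is a \emph{left-to-right minimum} that is immediately followed by a larger letter with nothing small around. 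Concretely I expect containment of $p$ to be equivalent to: there is an ascent-top at some position $j$ whose bottom $\pi_{j-1}$ is the minimum of the prefix $\pi_1\cdots\pi_{j-1}$, or something very close to this; hence \emph{avoiding} $p$ forces a rigid block structure on $\pi$.

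Next I would set up the enumeration. Writing $f(x) = \sum_{n\ge 1}(n-1)!\,x^n$ and $g(x) = \log(1+f(x))$, the statement claims $|S_n(p)| = n\,[x^n]g(x)$, equivalently the EGF $\sum_n |S_n(p)| x^n / (n!)$ equals $x\, g'(x) = x f'(x)/(1+f(x))$ — no wait, more precisely $n[x^n]g(x)$ being an ordinary coefficient times $n$ suggests we are dealing with an \emph{ordinary} generating function identity: $\sum_{n\ge 1} |S_n(p)|\, x^n = x \frac{d}{dx}\log(1+f(x)) = \frac{x f'(x)}{1+f(x)}$. The factor $\frac{1}{1+f(x)}$ together with the ``$x\frac{d}{dx}\log$'' shape is the classic signature of counting structures decomposed into an ordered sequence of indecomposable blocks with a marked (pointed) block: if every $p$-avoider decomposes uniquely as a linear sequence of ``irreducible'' $p$-avoiding pieces, and the OGF of a single nonempty irreducible piece is $f(x) = \sum (n-1)! x^n$ (i.e.\ there are $(n-1)!$ irreducible pieces of size $n$), then the OGF of sequences of such pieces is $\frac{1}{1-f(x)}$ — but the $1+f$ in the denominator and the log indicate instead a \emph{cyclic} or \emph{pointed-at-a-distinguished-position} arrangement. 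I would reconcile this by showing the avoiders are in bijection with pointed sequences of blocks under the functional relation $xP'(x)/P(x)$ with $P = 1+f$, i.e.\ choosing one of the $n$ positions to be a cut point in a cyclic arrangement of irreducible blocks.

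The key steps, in order: (1) prove the combinatorial characterization of containment of $p$ and deduce that a $p$-avoider factors canonically into a sequence of blocks determined by its left-to-right minima (or some analogous statistic) — this is where I would be careful, since the four shaded cells each impose a separate emptiness condition and I must check the block boundaries are forced, not merely allowed; (2) identify the ``irreducible'' blocks and show there are exactly $(n-1)!$ of size $n$, matching $[x^n]f(x)$ — plausibly these are the permutations of $\{1,\dots,n\}$ that are ``$p$-indecomposable'', and the count $(n-1)!$ should come from an easy direct argument (e.g.\ placing $n$ anywhere except one forbidden spot, or a cyclic-structure count); (3) assemble the blocks: show the decomposition is really cyclic-with-a-marked-point, giving $\sum_n |S_n(p)| x^n = x \frac{d}{dx}\log(1+f(x)) = n\,[x^n]\log(1+f(x))$ after extracting coefficients. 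The main obstacle I anticipate is step (1)–(2): pinning down exactly which permutations are the irreducible building blocks and verifying the count is $(n-1)!$ rather than something off by a shift, and making sure the gluing of blocks neither creates nor destroys an occurrence of $p$ at a seam. Once the block structure and the $(n-1)!$ count are nailed down, the generating-function bookkeeping — recognizing $\frac{xf'}{1+f}$ as the OGF of pointed sequences (equivalently, applying the pointing operator $x\frac{d}{dx}$ to $\log(1+f)$) — is routine.
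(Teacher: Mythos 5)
There is a genuine gap, and it starts at the very first step. Your reading of the mesh is off: the shaded boxes $\boks{0}{1}$ and $\boks{0}{2}$ forbid any letter \emph{larger} than $\pi_i$ to the left of position $i$, so $\pi_i$ must be a left-to-right \emph{maximum} (not a minimum), and $\boks{1}{0}$, $\boks{2}{0}$ forbid any letter smaller than $\pi_i$ anywhere to its right. Since none of the boxes in the columns/rows of the second point are shaded, the second point can be \emph{any} later letter. Hence $\pi$ contains $p$ if and only if it has a position $i<n$ with $\{\pi_1,\dots,\pi_i\}=\{1,\dots,i\}$ and $\pi_i=i$, i.e.\ a strong fixed point before the last position. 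This is the crux: it reduces the count to $a_n=b_n+b_{n-1}$, where $b_n$ is the number of permutations with no strong fixed point (the avoiders either have no strong fixed point at all, or end in $n$ with a strong-fixed-point-free prefix), and then Observation~\ref{obs:genstrongfix} supplies $G(x)=F(x)/(1+xF(x))$, from which $D(x)=G(x)+xG(x)-1=\frac{xf'(x)}{1+f(x)}$ with $f(x)=\sum_{n\ge1}(n-1)!x^n$ follows by algebra. Your characterization, as written, would not lead you to strong fixed points, and everything downstream depends on it.

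The second gap is the proposed decomposition itself. You correctly note that the target is the OGF identity $\sum_n a_nx^n=x\frac{d}{dx}\log(1+f(x))$, but the structural interpretation you reach for does not exist in the form you describe: $\log\bigl(1+f(x)\bigr)$ is \emph{not} the cycle construction (that would be $\log\frac{1}{1-f}$); its expansion $\sum_{k\ge1}(-1)^{k+1}f^k/k$ has alternating signs, so ``a pointed cyclic arrangement of irreducible blocks'' has no direct unsigned combinatorial meaning here. You also never identify which permutations the $(n-1)!$ ``irreducible pieces'' are supposed to be, and there is no canonical factorization of $p$-avoiders into sequences of such pieces — in the paper the logarithm appears only at the last line, as an algebraic repackaging of $\frac{xf'}{1+f}$, not as the shadow of a block decomposition. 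To repair the argument you would either have to carry out the strong-fixed-point reduction above, or produce (and prove) an actual decomposition realizing $\frac{xf'}{1+f}$, which would require handling the implicit inclusion–exclusion that the $1+f$ denominator forces.
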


\begin{proof}
We define two sequences,
\[
a_n= \left| S_n\left(\pattern{scale = 0.6}{2}{1/1,2/2}{0/1,2/0,1/0,0/2}\right) \right| \quad \text{and} \quad b_n= \left| S_n\left(\pattern{scale = 0.6}{1}{1/1}{0/1,1/0}\right) \right|,
\]
and let us show that
\begin{equation}\label{proof:12}
a_n=b_n+b_{n-1}.
\end{equation}
We define a map,
\[
\varphi:
S_n\left(\pattern{scale = 0.6}{1}{1/1}{0/1,1/0}\right) \cup
S_{n-1}\left(\pattern{scale = 0.6}{1}{1/1}{0/1,1/0}\right) \longrightarrow
S_n(p).
\]
The mapping $\varphi$ maps $\pi\in S_n(\pattern{scale = 0.5}{1}{1/1}{0/1,1/0})$
to itself in $S_n(p)$, we know that $\pi$ avoids
$\pattern{scale = 0.5}{1}{1/1}{0/1,1/0}$, and therefore it also avoids $p$. For
$\pi\in S_{n-1}(\pattern{scale = 0.5}{1}{1/1}{0/1,1/0})$ the mapping $\varphi$
appends the letter $n$ after the last letter in $\pi$, and we obtain a
permutation of length $n$ with the letter $n$ in the $n$-th position.

The inverse map
\[
\varphi^{-1}:
S_n(p) \longrightarrow
S_n\left(\pattern{scale = 0.6}{1}{1/1}{0/1,1/0}\right) \cup
S_{n-1}\left(\pattern{scale = 0.6}{1}{1/1}{0/1,1/0}\right)
\]
can be defined as follows:
For $\pi\in S_n(p)$ that ends with the letter $n$, we remove $n$ and then
$\varphi^{-1}(\pi)$ is in $S_{n-1}(\pattern{scale = 0.5}{1}{1/1}{0/1,1/0})$.
For $\pi\in S_n(p)$ that does not end with the letter $n$, $\pi$ maps to itself
in $S_n(\pattern{scale = 0.5}{1}{1/1}{0/1,1/0})$. The permutation $\pi$ avoids
$\pattern{scale = 0.5}{1}{1/1}{0/1,1/0}$ as well for the following reasons. For
a letter $u$ in $\pi$ that appears to the left of the letter $n$ there must be
points in at least one of the shaded areas for $\pi$ to avoid $p$. Also for $v$
in $\pi$ that appears to the right of $n$ then there is at least one point, $n$,
in the upper shaded box. Therefore, $\pi$ avoids
$\pattern{scale = 0.5}{1}{1/1}{0/1,1/0}$, and hence
$\varphi^{-1}(\pi)\in S_n(\pattern{scale = 0.5}{1}{1/1}{0/1,1/0})$. This proves
equation~\ref{proof:12}.

We define the generating function for $a_n$ to be
\[
D(x)=\sum_{n\geq 1}a_nx^n.
\]
The generating function for $b_n$ is given in
Observation~\ref{obs:genstrongfix}. Since $a_n=b_n+b_{n-1}$ we obtain
\begin{align*}
D(x)&=G(x)+xG(x)-1\\
&=\frac{\sum_{n\geq0} n!x^n}{1+x\sum_{n\geq 0}n!x^n}+x\left(\frac{\sum_{n\geq0} n!x^n}{1+x\sum_{n\geq 0}n!x^n}\right)-1\\
&=\frac{\sum_{n\geq 0}n!x^n-1}{1+x\sum_{n\geq 0}n!x^n}=\frac{\sum_{n\geq 1}n!x^n}{1+\sum_{n\geq 1}(n-1)!x^n}.
\end{align*}
We now have $D(x)=xA'(x)$ where $A(x)=\log(1+\sum_{n\geq 1}(n-1)!x^n)$ which
implies that $A(x)$ is the logarithmic generating function of $a_n$, i.e.,
\[
A(x)=\sum_{n\geq 1}\frac{a_n}{n}x^n=\log(1+\sum_{n\geq 1}(n-1)!x^n). \qedhere
\]
\end{proof}

\begin{proposition}\label{prop:18}
The number of permutations of length $n$ which have $n$ in position $i$ counted
from the right and contain the pattern
$p = \pattern{scale = 0.5}{2}{1/1,2/2}{0/0,0/1,0/2,1/2,2/0,2/2}$ is
$\frac{(n-1)!}{i}$ and therefore
\[
\left|S_n\left(\pattern{scale = 0.6}{2}{1/1,2/2}{0/0,0/1,0/2,1/2,2/0,2/2} \right)\right| = n! - \sum_{i=1}^{n-1}\frac{(n-1)!}{i}.
\]
\end{proposition}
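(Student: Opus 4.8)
The plan is to reduce the statement to a clean description of when a permutation contains $p = \pattern{scale = 0.5}{2}{1/1,2/2}{0/0,0/1,0/2,1/2,2/0,2/2}$, phrased in terms of the position of the largest letter $n$. Concretely, I would first prove the following: \emph{a permutation $\pi\in S_n$ contains $p$ if and only if $n$ is not the first letter of $\pi$ and every letter of $\pi$ lying to the right of $n$ is larger than $\pi_1$.} For the ``only if'' direction, take an occurrence $\pi_a\pi_b$ with $a<b$ and $\pi_a<\pi_b$. Since the three boxes $\boks{0}{0}$, $\boks{0}{1}$, $\boks{0}{2}$ filling the leftmost column are shaded, no point of $\pi$ lies to the left of position $a$, so $a=1$. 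Next, the letter $n$ is forced into position $b$: if $n$ were in position $1$ we would get $\pi_1=n\not<\pi_b$; if $n$ occupied a position strictly between $1$ and $b$ it would fall in the shaded box $\boks{1}{2}$; and if $n$ were to the right of position $b$ it would fall in the shaded box $\boks{2}{2}$. Hence $\pi_b=n$ and $b\ge 2$, and because $\boks{2}{0}$ is shaded, no letter in positions $b+1,\dots,n$ can be smaller than $\pi_1$. For the ``if'' direction, suppose $n$ sits in some position $b\ge 2$ with all letters after it exceeding $\pi_1$; then $\pi_1\,\pi_b$ is an occurrence of $p$, since $\pi_1<n=\pi_b$ and all six shaded boxes are empty — the three boxes in the leftmost column because nothing precedes position $1$, the boxes $\boks{1}{2}$ and $\boks{2}{2}$ because no value exceeds $n$, and $\boks{2}{0}$ by hypothesis. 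I expect this characterization to be the only substantive point of the proof; the key realization is that the ``$2$'' of any occurrence of $p$ must be the global maximum.

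Granting the characterization, I would then count. Fix $i$ with $1\le i\le n-1$ and set $b=n-i+1$, so that ``$n$ in position $i$ counted from the right'' means exactly $\pi_b=n$; there are $(n-1)!$ such permutations. Among them, $\pi$ contains $p$ precisely when $\pi_1$ is smaller than each of the $i-1$ letters occupying positions $b+1,\dots,n$. Now consider the $i$ positions $\{1,b+1,b+2,\dots,n\}$ (these are $i$ distinct positions, none equal to $b$ since $b\ge 2$). Whatever set of $i$ values from $\{1,\dots,n-1\}$ lands on these positions, all $i!$ orderings of it are equally likely and exactly $(i-1)!$ of them place the smallest of those values in position $1$. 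Hence a fraction $1/i$ of the $(n-1)!$ permutations with $n$ in position $i$ from the right contain $p$, giving $(n-1)!/i$ of them, which is the first assertion of the proposition.

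Finally, I would sum over the position of $n$. When $n$ sits in the first position (the case $i=n$), the characterization shows $\pi$ cannot contain $p$, so the total number of permutations of length $n$ containing $p$ is $\sum_{i=1}^{n-1}(n-1)!/i$, and therefore
\[
\left|S_n\left(\pattern{scale = 0.6}{2}{1/1,2/2}{0/0,0/1,0/2,1/2,2/0,2/2}\right)\right| = n! - \sum_{i=1}^{n-1}\frac{(n-1)!}{i}.
\]
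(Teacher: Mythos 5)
Your proof is correct, and it reaches the same intermediate claim as the paper (the number of containing permutations with $n$ in position $i$ from the right is $(n-1)!/i$) by a somewhat different route. Both arguments rest on the same characterization — any occurrence of $p$ must use $\pi_1$ as its ``1'' and the global maximum $n$ as its ``2'', with every letter to the right of $n$ exceeding $\pi_1$ — but the paper merely asserts this in two sentences, whereas you actually derive it from the shaded boxes (the full left column forces $a=1$; the boxes $\boks{0}{2}$, $\boks{1}{2}$, $\boks{2}{2}$ force $\pi_b=n$; the box $\boks{2}{0}$ gives the condition on the suffix), which is a genuine improvement in completeness. The counting step is where you diverge: the paper conditions additionally on the value $k=\pi_1$, counts $\binom{n-k-1}{i-1}(i-1)!(n-i-1)!$ permutations for each $k$, and then evaluates $\sum_{k=1}^{n-1}\binom{n-k-1}{i-1}=\binom{n-1}{i}$ to land on $(n-1)!/i$; you instead observe that among the $i$ positions $\{1,\,b+1,\dots,n\}$ the minimum of the values placed there is equidistributed, so exactly a $1/i$ fraction of the $(n-1)!$ permutations with $\pi_b=n$ contain $p$. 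Your symmetry argument is shorter and explains \emph{why} the answer has the form $(n-1)!/i$, at the cost of hiding the refined count by first-letter value that the paper's sum records; both are perfectly valid.
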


\begin{proof}
To find an occurrence of the pattern $p$ in a permutation of length $n$, we must
use the first letter in the permutation and the letter $n$. Then it must also
hold that all the letters to the right of $n$ must be greater than the first
letter. Thus, the number of permutations containing the pattern $p$ depends on
the position of the letter $n$.

Recall that $i$ is the position of the letter $n$ counted from the right and let
$k$ be the size of the first letter. Obviously, the letter $n$ cannot be in the
last position counted from the right, and thus, $1\leq k \leq n-1$ and $1\leq i
\leq n-1$.

Now, we choose $i-1$ letters greater than $k$ to fill the positions to the right
of $n$, which can be done in $\binom{n-(k+1)}{i-1}$ ways. Then these letters can
be arranged in $(i-1)!$ ways. The remaining $n-i-1$ letters will be placed
between $n$ and $k$, which can be done in $(n-i-1)!$ ways. This must hold for
each $1\leq k \leq n-1$. Therefore, the number of permutations containing the
pattern $p$ is
\[
\sum_{k=1}^{n-1}\binom{n-k-1}{i-1}(i-1)! (n-i-1)!.
\]
We have
\begin{align*}
i\sum_{k=1}^{n-1}&\binom{n-k-1}{i-1}(i-1)! (n-i-1)!\\
&= i! (n-i-1)!\sum_{k=1}^{n-1}\binom{n-k-1}{i-1}\\
&= i! (n-i-1)!\binom{n-1}{i}\\
&= i! (n-i-1)!\frac{(n-1)!}{i!(n-1-i)!}\\
&=(n-1)!,
\end{align*}
which implies
\[
\sum_{k=1}^{n-1}\binom{n-k-1}{i-1}(i-1)! (n-i-1)! = \frac{(n-1)!}{i}. \qedhere
\]
\end{proof}

We note that a permutation contains the pattern in the next proposition if and only if it starts with
$1$ and the remaining letters form a permutation with a strong fixed point. The rest of the proof is left
to the reader
\begin{proposition}\label{prop:13}
A permutation $\pi$ of length $n$ contains the pattern
$p = \pattern{scale = 0.5}{2}{1/1,2/2}{0/1,1/2,0/0,2/0,1/0,0/2,2/1}$ if and only
if $\pi$ begins with $1$ and ends with a permutation of length $n-1$ that
contains a strong fixed point. Thus,
\[
\left|S_n \left(\pattern{scale = 0.6}{2}{1/1,2/2}{0/1,1/2,0/0,2/0,1/0,0/2,2/1} \right) \right| = n! - (n-1)! +  [x^{n-1}] \frac{F(x)}{1 + x F(x)},
\]
where $F(x) = \sum_{n\geq 0} n! x^n$, as in Observation~\ref{obs:genstrongfix}.
\end{proposition}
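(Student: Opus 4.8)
The plan is to first establish the stated characterization of when $\pi$ contains $p$, and then read off the enumeration from Observation~\ref{obs:genstrongfix}. Notice that the only unshaded boxes of $p$ are $\boks{1}{1}$ and $\boks{2}{2}$; in particular the entire leftmost column and the entire bottom row of $p$ are shaded. Hence in any occurrence of $p$ in $\pi$ at positions $a<b$ (so that $\pi_a<\pi_b$) there is no letter of $\pi$ to the left of position $a$ and none with value smaller than $\pi_a$, which forces $a=1$ and $\pi_1=1$. The two remaining shaded boxes, $\boks{1}{2}$ and $\boks{2}{1}$, then assert, respectively, that every letter in a position strictly between $1$ and $b$ has value less than $\pi_b$, and that no letter in a position after $b$ has value strictly between $1$ and $\pi_b$.

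The second step is to rephrase these two conditions in terms of the permutation $\sigma\in S_{n-1}$ obtained from $\pi_2\pi_3\cdots\pi_n$ by subtracting $1$ from each letter (which is defined precisely when $\pi_1=1$). I would show that a position $b$ witnesses an occurrence of $p$ if and only if $\ell:=b-1$ is a strong fixed point of $\sigma$. For the forward direction, the $b-2$ positions strictly between $1$ and $b$ carry values in $\{2,\dots,\pi_b-1\}$, and no other position of $\pi$ may carry such a value (position $1$ holds $1$, position $b$ holds $\pi_b$, and the positions past $b$ are forbidden these values), so those $b-2$ positions hold \emph{exactly} the set $\{2,\dots,\pi_b-1\}$; this forces $\pi_b=b$, after which $\sigma_\ell=\ell$ with strictly smaller values before position $\ell$ and strictly larger values after it, i.e., $\ell$ is a strong fixed point of $\sigma$. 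Conversely, a strong fixed point $\ell$ of $\sigma$ produces the position $b=\ell+1$ with $\pi_b=\ell+1$, and the inequalities defining a strong fixed point translate verbatim into the two box conditions above. This bookkeeping step --- matching the mesh conditions to the definition of a strong fixed point, in particular extracting the equality $\pi_b=b$ --- is the only part I expect to require genuine care.

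Granting the characterization, deleting the leading $1$ and subtracting $1$ from the remaining letters is a bijection between the permutations in $S_n$ containing $p$ and the permutations in $S_{n-1}$ that possess a strong fixed point. By Observation~\ref{obs:genstrongfix}, the number of $\sigma\in S_{n-1}$ with \emph{no} strong fixed point equals $[x^{n-1}]\,G(x)$, where $G(x)=F(x)/(1+xF(x))$ and $F(x)=\sum_{n\ge 0}n!\,x^n$, so the number that do have a strong fixed point is $(n-1)!-[x^{n-1}]\,G(x)$. Subtracting this from $n!$ gives
\[
\left|S_n(p)\right| = n! - \bigl((n-1)! - [x^{n-1}]\,G(x)\bigr) = n! - (n-1)! + [x^{n-1}]\frac{F(x)}{1+xF(x)},
\]
which is the asserted formula.
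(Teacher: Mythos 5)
Your proof is correct and follows exactly the route the paper indicates (the paper merely states the characterization "starts with $1$ and the remaining letters form a permutation with a strong fixed point" and leaves the details to the reader, citing Observation~\ref{obs:genstrongfix} for the count). Your careful verification that the mesh conditions force $a=1$, $\pi_1=1$, and $\pi_b=b$, and hence correspond to a strong fixed point of the reduced permutation, fills in precisely the omitted details.
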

% A MORE DETAILED PROOF
% Let $q$ be the pattern $\pattern{scale = 0.5}{1}{1/1}{0/1,1/0}$. A permutation $\pi$ contains $q$ if and only if it contains a strong fixed point. Then, by appending the letter $1$ in front of each permutation of length $n-1$ containing the pattern $q$, and adding $1$ to the remaining letters, we clearly obtain all permutations containing the pattern $p$.

% Now, we know that the number of permutations of length $n-1$ that have no strong fixed points is\footnote{http://oeis.org/A006932}
% \[
% [x^{n-1}]\frac{F(x)}{1 + x F(x)}.
% \]
% Hence, the number of permutations of length $n-1$ that contain a strong fixed point is
% \[
% (n-1)! - [x^{n-1}]\frac{F(x)}{1 + xF(x)},
% \]
% which is then equal to the number of permutations of length $n$ containing the pattern $p$.

The proof of the next four propositions are similar to the proof of
Proposition~\ref{prop:18} and are therefore omitted.
\begin{proposition}\label{prop:15}
The number of permutations of length $n$ which have $n$ in position $i$, where
$i=0$ is the rightmost position and contain the pattern
$p = \pattern{scale = 0.5}{2}{1/1,2/2}{0/1,0/2,1/1,1/2,2/0,2/2}$ is
$i!(n-1-i)!$. Therefore
\[
\left|S_n\left(\pattern{scale = 0.6}{2}{1/1,2/2}{0/1,0/2,1/1,1/2,2/0,2/2} \right)\right| = n! - \sum_{i=0}^{n-2}i!(n-i-1)!.
\]
\end{proposition}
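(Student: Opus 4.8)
The plan is to reduce the statement to an explicit characterization of containment. Set $p = \pattern{scale = 0.5}{2}{1/1,2/2}{0/1,0/2,1/1,1/2,2/0,2/2}$, and suppose $\pi\in S_n$ has an occurrence $\pi_a\pi_b$ with $a<b$ and $\pi_a<\pi_b$. The first thing I would note is that the three shaded boxes $\boks{0}{2},\boks{1}{2},\boks{2}{2}$ together cover the entire horizontal strip lying above the value $\pi_b$; since these are empty, no letter of $\pi$ exceeds $\pi_b$, so $\pi_b=n$. Next, the boxes $\boks{0}{1}$ and $\boks{1}{1}$ together cover everything lying to the left of the position of $n$ at heights strictly between $\pi_a$ and $n$, and we already know nothing sits above $n$; hence every letter occurring before $n$ other than $\pi_a$ is smaller than $\pi_a$, so $\pi_a$ must be the largest letter appearing before $n$. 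Finally the box $\boks{2}{0}$ forbids letters below $\pi_a$ to the right of $n$, so every letter after $n$ exceeds $\pi_a$. Conversely, for any $\pi$ with $n$ not in the first position, taking $\pi_a$ to be the running maximum of the letters before $n$ makes all six boxes empty, so $\pi$ contains $p$ if and only if $n$ is not first and every letter after $n$ exceeds the maximum $M$ of the letters before $n$.

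I would then count, for fixed $i$ with $0\le i\le n-2$, the permutations of length $n$ that have $n$ in position $i$ counted from the right and contain $p$. Such a permutation has $n-1-i$ letters before $n$ and $i$ letters after $n$; the containment condition forces the letters before $n$ to be exactly $\{1,\dots,M\}$ and the letters after $n$ to be exactly $\{M+1,\dots,n-1\}$. Matching cardinalities forces $M=n-1-i$, so the letters before $n$ are precisely $\{1,\dots,n-1-i\}$ in any order (their maximum is automatically $n-1-i$) and the letters after $n$ are precisely $\{n-i,\dots,n-1\}$ in any order. This yields $(n-1-i)!\cdot i!$ such permutations; note that the case $i=n-1$, i.e.\ $n$ in the first position, contributes none.

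Subtracting the total number of containing permutations from $n!$ then gives
\[
\left|S_n\left(\pattern{scale = 0.6}{2}{1/1,2/2}{0/1,0/2,1/1,1/2,2/0,2/2}\right)\right| = n! - \sum_{i=0}^{n-2} i!\,(n-i-1)!,
\]
as desired. (Monotonicity in the shading, Observation~\ref{obs:shading}, is not needed here since we are arguing containment directly.)

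The only delicate point, and the step I would spend the most care on, is the forcing argument in the first paragraph: verifying that the three top boxes really do pin $\pi_b=n$, and that the two left–middle boxes then leave no freedom for $\pi_a$ except the running maximum before $n$, together with the (routine but necessary) converse check that every permutation of the resulting shape genuinely has all six mesh boxes empty. Once that characterization is established the enumeration is immediate, which is why this proof is of the same flavour as, and can be abbreviated just like, the proof of Proposition~\ref{prop:18}.
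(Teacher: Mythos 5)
Your proof is correct, and it follows exactly the route the paper intends: the paper omits this proof, noting only that it is ``similar to the proof of Proposition~\ref{prop:18}'', i.e.\ characterize the occurrences of $p$ (here, $\pi_b=n$ and $\pi_a$ the maximum of the letters preceding $n$, with everything after $n$ larger than that maximum) and then count containing permutations according to the position of $n$. Your write-up in fact supplies the details the paper leaves out; the only nitpick is that choosing $\pi_a$ to be the prefix maximum empties five of the six boxes unconditionally, while emptiness of $\boks{2}{0}$ is precisely the remaining condition in your ``if and only if'', so that sentence could be phrased more carefully.
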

% A MORE DETAILED PROOF
% Let $n$ be in position $i$ where $i=0$ is the rightmost position. Thus, $n$ is the latter point in the pattern $p$. For a permutation $\pi$ to contain $p$, the letters $n-1,n-2, \ldots , n-i$ must be placed to the right of the letter $n$, that is, in the rightmost non-shaded box. The remaining $n-i-1$ letters are placed in the other non-shaded area. Thus, the number of permutations that have $n$ in position $i$ and contain the pattern $p$ is
% \[
% i!(n-1-i)!. \qedhere
% \]

\begin{proposition}\label{prop:14}
The number of permutations of length $n$ that start with $k$ and contain the
pattern $p = \pattern{scale = 0.5}{2}{1/1,2/2}{0/0,0/1,0/2,1/1,1/2,2/0,2/1}$ is
$(k-1)!(n-k-1)!$ and therefore
\[
\left|S_n\left(\pattern{scale = 0.6}{2}{1/1,2/2}{0/0,0/1,0/2,1/1,1/2,2/0,2/1} \right)\right| = n! - \sum_{k=1}^{n-1}(k-1)!(n-k-1)!.
\]
\end{proposition}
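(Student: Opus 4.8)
The plan is to translate the pattern into a concrete description of its occurrences and then count, first letter by first letter. In $p=\pattern{scale = 0.5}{2}{1/1,2/2}{0/0,0/1,0/2,1/1,1/2,2/0,2/1}$ the two points sit at $(1,1)$ and $(2,2)$, the entire left column $\{\boks{0}{0},\boks{0}{1},\boks{0}{2}\}$ is shaded, and the only unshaded boxes are $\boks{1}{0}$ and $\boks{2}{2}$. Matching boxes of the pattern to regions determined by a potential occurrence at positions $a<b$ with $\pi_a<\pi_b$, this says: a pair $a<b$ is an occurrence of $p$ in $\pi$ if and only if (i) $a=1$, so the first point is the first letter $\pi_1=:k$; (ii) $\pi_b>k$; (iii) every letter in positions $2,\dots,b-1$ has value $<k$; and (iv) every letter in positions $b+1,\dots,n$ has value $>\pi_b$. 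I would state this equivalence as the first step.

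Next I would show that $b$ is forced to equal $k+1$. The $k-1$ values smaller than $k$ all lie to the right of position $1$; by (ii) none of them is $\pi_b$ and by (iv) none of them lies in positions $b+1,\dots,n$ (those values exceed $\pi_b>k$), so all $k-1$ of them lie in positions $2,\dots,b-1$; by (iii) those positions hold nothing else, so $b-2=k-1$, that is $b=k+1$. The same argument, with an empty block of small values, gives $b=2=k+1$ when $k=1$. In particular no occurrence is possible when $k=n$, since then there is no position with value $>k$.

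Then I would describe the containing permutations explicitly. With $b=k+1$ the values exceeding $k$, namely $\{k+1,\dots,n\}$, fill positions $k+1,\dots,n$; condition (iv) forces $\pi_{k+1}$ to be below every later value, hence $\pi_{k+1}=k+1$. The only remaining freedom is the order of $\{1,\dots,k-1\}$ in positions $2,\dots,k$ and the order of $\{k+2,\dots,n\}$ in positions $k+2,\dots,n$; conversely every permutation of this shape contains $p$ with witness $(1,k+1)$. So the number of permutations of length $n$ starting with $k$ that contain $p$ is $(k-1)!\,(n-k-1)!$ for $1\le k\le n-1$, and $0$ for $k=n$. Summing over the first letter and subtracting from $n!$ yields
\[
  \left|S_n(p)\right| = n! - \sum_{k=1}^{n-1}(k-1)!\,(n-k-1)!.
\]

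There is no genuine obstacle here: the whole content is the forced identity $b=k+1$ together with $\pi_{k+1}=k+1$, after which the count of the two free blocks (of sizes $k-1$ and $n-k-1$) is immediate. The only points that need a line of care are the degenerate first letters $k=1$ (empty block of smaller values) and $k=n$ (no occurrence), and keeping the block sizes straight. This is the same shape of argument as for Proposition~\ref{prop:18}, with the first letter of $\pi$ playing the role that the position of $n$ plays there.
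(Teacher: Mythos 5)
Your proof is correct, and it is essentially the approach the paper intends: the paper omits its own proof of this proposition, saying only that it is analogous to the proof of Proposition~\ref{prop:18}, and your argument is exactly that method (read the shaded boxes off as structural conditions on an occurrence, condition on the first letter $k$, and count the free blocks). You correctly exploit the extra rigidity here, namely that the occurrence is forced to be $(1,k+1)$ with $\pi_{k+1}=k+1$, so the count is the clean product $(k-1)!\,(n-k-1)!$ with no binomial summation, and the edge cases $k=1$ and $k=n$ are handled properly.
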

% A MORE DETAILED PROOF
% Let $k$ be the first letter in a permutation $\pi$. Thus, $k$ is the former point in the pattern $p$. For $\pi$ to contain the pattern $p$, the latter point of $p$ must be $k+1$. Also, the remaining letters of $\pi$ must be in the two non-shaded boxes in $p$. The letters from $1$ to $k-1$ must be placed in the lower non-shaded box, and the letters from $k+2$ to $n$ must be placed in the upper one. Hence, the number of letters in the lower non-shaded box is $k-1$ and the number of letters in the upper box is $n-(k+1)$. It follows that the number of permutations that start with $k$ and contain the pattern $p$ is
% \[
% (k-1)!(n-k-1)!. \qedhere
% \]

\begin{proposition}\label{prop:16}
The number of permutations of length $n$ containing the pattern $p$, with $i$ as
the height of the first point of the pattern
$p = \pattern{scale = 0.5}{2}{1/1,2/2}{0/1,1/2,0/0,2/0,2/2}$, counted from
above, and $\ell$ as the distance between the two points, is
$(i - \ell )! (n-i-\ell)! \ell !$.

Hence,
\[
\left| S_n\left( \pattern{scale = 0.6}{2}{1/1,2/2}{0/1,1/2,0/0,2/0,2/2} \right) \right| = n! - \sum_{i=1}^{n-1} \sum_{\ell =1 }^{i} (i - \ell )! (n-i-\ell)! \ell !,
\]
for all $n\geq 0$.
\end{proposition}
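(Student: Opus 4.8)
The plan is to follow the template of the proof of Proposition~\ref{prop:18}: read off from the shading exactly which letters of a permutation may lie in each of the regions cut out by an occurrence of $p$, observe that this pins down the position of one of the two points once the value of the other is known, count the admissible permutations by a choose-and-arrange argument over the remaining freedom, and collapse the resulting sum by a hockey-stick identity.

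The first step is to translate the five shaded boxes of $p$ into constraints on an occurrence sitting on points $(i_1,v_1)$ and $(i_2,v_2)$ of a permutation $\pi$, with $i_1<i_2$ and $v_1<v_2$. The boxes $\boks{0}{0}$ and $\boks{0}{1}$ force every letter to the left of position $i_1$ to have value $>v_2$; the box $\boks{1}{2}$ forces every letter strictly between positions $i_1$ and $i_2$ to have value $<v_2$; and the boxes $\boks{2}{0}$ and $\boks{2}{2}$ force every letter to the right of position $i_2$ to have value strictly between $v_1$ and $v_2$. Two rigidity consequences are the crux: no letter of value $>v_2$ can sit between or after the two points, so all $n-v_2$ of them occupy positions $1,\dots,n-v_2$ and hence $i_1=n-v_2+1$ is determined by $v_2$; similarly all $v_1-1$ letters of value $<v_1$ are forced strictly between the two points. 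I would also check that a permutation containing $p$ does so in essentially one way --- the constraints above should let one recover $v_2$, hence $i_1$, hence $v_1=\pi_{i_1}$, hence $i_2$ --- so that counting permutations is the same as counting admissible configurations.

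The counting is then organised by the two point-values $v_1<v_2$. The letters of value $>v_2$ fill the first $n-v_2$ positions in any of $(n-v_2)!$ orders, position $n-v_2+1$ carries $v_1$, the letters $\{1,\dots,v_1-1\}$ all go strictly between the points, a chosen $s$-subset of the ``middle'' letters $\{v_1+1,\dots,v_2-1\}$ joins them there, and the remaining $v_2-v_1-1-s$ middle letters are placed after $v_2$; this contributes $(n-v_2)!\sum_{s}\binom{v_2-v_1-1}{s}(v_1-1+s)!\,(v_2-v_1-1-s)!$. Using $\binom{m}{s}(m-s)!=m!/s!$ and the hockey-stick identity $\sum_{s=0}^{m}\binom{k+s}{s}=\binom{k+m+1}{m}$, the inner sum collapses to a single product of factorials. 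Re-indexing by $i$, the height of the first point counted from above, and $\ell$, the distance between the two points, should recast this contribution in the form $(i-\ell)!\,(n-i-\ell)!\,\ell!$; summing over the stated ranges and subtracting from $n!$ gives the formula for $|S_n(p)|$.

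The step I expect to be the main obstacle is the last one: fixing the change of variables between $(v_1,v_2)$ and $(i,\ell)$ precisely, handling the boundary cases so that every factorial that appears has a nonnegative argument, and confirming that the hockey-stick collapse yields exactly the three-factorial expression with no leftover binomial coefficient or harmonic-number denominator. By contrast, the rigidity analysis and the uniqueness of the occurrence are routine once the boxes are interpreted correctly --- the combinatorial content really lives in the summation identity and the reparametrisation.
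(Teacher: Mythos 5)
The paper omits the proof of this proposition entirely (it is one of the four declared ``similar to the proof of Proposition~\ref{prop:18}''), so there is no argument to compare against line by line and your plan must be judged on its own merits. Your reading of the five shaded boxes, the rigidity consequences ($i_1=n-v_2+1$, and all letters below $v_1$ trapped between the two points), the uniqueness of the occurrence, and the block decomposition $A\mid v_1\mid B\mid v_2\mid C$ are all correct, as is the per-class count
\[
(n-v_2)!\sum_{s}\binom{v_2-v_1-1}{s}(v_1-1+s)!\,(v_2-v_1-1-s)!.
\]

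The gap is precisely in the step you flagged as the obstacle: the collapse does \emph{not} produce the stated three-factorial expression. Carrying your hockey-stick computation through gives, for fixed $(v_1,v_2)$, the count $(n-v_2)!\,(v_2-1)!/v_1$, i.e., a harmonic denominator survives; equivalently it is $(n-v_2)!\,(v_1-1)!\,(v_2-v_1-1)!\binom{v_2-1}{v_1}$, three factorials \emph{times a binomial coefficient}, and no change of variables to $(i,\ell)$ removes that extra factor. A concrete witness: for $n=4$ the class $v_1=2$, $v_2=4$ consists of exactly the three permutations $2143$, $2134$, $2314$, and $3$ is not a product of factorials; moreover the total number of length-$4$ permutations containing $p$ is $16$ (consistent with $|S_4(p)|=8$ in Table~\ref{tab:notbivincwithseq1}), whereas $\sum_{i=1}^{3}\sum_{\ell=1}^{i}(i-\ell)!\,(4-i-\ell)!\,\ell!$ has well-defined terms summing to $7$ and cannot reach $16$ under any sensible convention for its negative-argument factorials (the terms $(i,\ell)=(3,2),(3,3)$ involve $(-1)!$ and $(-2)!$). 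So the displayed formula of the proposition is in error; the correct conclusion of your own argument is
\[
\left|S_n(p)\right| \;=\; n!-\sum_{v_2=2}^{n}(n-v_2)!\,(v_2-1)!\sum_{v_1=1}^{v_2-1}\frac{1}{v_1},
\]
which does reproduce the table's sequence $1,1,2,8,47,332,\dots$. (Contrast with Proposition~\ref{prop:17}, where the additional shaded box $\boks{1}{1}$ forces $s=0$ and a genuine three-factorial product does appear.) Your method is the right one; pushing it to completion disproves the printed formula rather than proving it.
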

% A MORE DETAILED PROOF
% After some consideration, one can see that a permutation can either contain exactly one occurrence of the pattern $p$ or none at all.

% To construct a permutation containing the pattern, the following conditions must be satisfied. First of all we find the two letters in the permutation corresponding to the two points in the pattern, call them $a$ and $b$, respectively. We let $i$ be the height of the letter $a$ counted from above and $\ell$ be the distance between $a$ and $b$. Then, for a permutation of length $n$, we have $a =n-i$ and $b=n-i+\ell$.

% Second of all, we see that the letters greater than $b$ must be placed to the left of $a$. This can be done in $(i - \ell)!$ ways. Furthermore, all the $n-(i+1)=n-i-1$ letters lower than $a$ must be placed between $a$ and $b$, and they can be ordered in $(n-i-1)!$ ways. At last, the letters between $a$ and $b$ in size can be placed either between $a$ and $b$ or to the right of $b$. Thus, the number of ways to place these $n-i-\ell - (n-i)-1 = \ell -1$ letters is
% \begin{align*}
% \sum_{j=0}^{\ell -1} \binom{\ell -1 }{j} j! (\ell -1 -j) &= \sum_{j=0}^{\ell -1} (\ell -1)! = (\ell -1)!\sum_{j=0}^{\ell -1}1 = \ell !.\qedhere
% \end{align*}

\begin{proposition}\label{prop:17}
The number of permutations of length $n$ containing the pattern
$p = \pattern{scale = 0.5}{2}{1/1,2/2}{0/1,1/2,0/0,2/0,2/2,1/1}$ with $k$ as the
height of the first point of the pattern, counted from above, and $j$ the
distance between the two points is
\[
j!(k-j)!(n-k)!.
\]
Thus,
\[
\left| S_n\left (\pattern{scale = 0.6}{2}{1/1,2/2}{0/1,1/2,0/0,2/0,2/2,1/1}\right) \right| = n! - \sum_{k=0}^{n-2} \sum_{j=0}^{k} j!(k-j)!(n-2-k)!
\]
for all $n\geq 0$.
\end{proposition}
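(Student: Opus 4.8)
The plan is to follow the template of the proof of Proposition~\ref{prop:18}: first read off from the six shaded boxes of $p$ exactly what an occurrence forces, and then count. So suppose a permutation $\pi$ of length $n$ contains $p$ through the subsequence $\pi_s\pi_t$ with $s<t$, and write $a=\pi_s<\pi_t=b$. Translating the shading, the boxes $\boks{0}{0},\boks{0}{1}$ say that every point of $\pi$ in a position to the left of $s$ has value $>b$; the boxes $\boks{1}{1},\boks{1}{2}$ say that every point strictly between positions $s$ and $t$ has value $<a$; and $\boks{2}{0},\boks{2}{2}$ say that every point to the right of $t$ has value in the open interval $(a,b)$. Since the $n-b$ values exceeding $b$ therefore cannot appear between $s$ and $t$ or to the right of $t$, they must fill the $s-1$ positions before $s$, forcing $s-1=n-b$; likewise the $a-1$ values below $a$ fill the $t-s-1$ positions strictly between $s$ and $t$, forcing $t-s-1=a-1$; and the $b-a-1$ values in $(a,b)$ fill the $n-t$ positions after $t$, forcing $n-t=b-a-1$. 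In particular $s$ and $t$ are completely determined by the pair $(a,b)$, which ranges over $1\le a<b\le n$.

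Given such a pair, every permutation with this configuration is obtained by independently permuting $\{b+1,\dots,n\}$ in the first block of $n-b$ positions, $\{1,\dots,a-1\}$ in the middle block of $a-1$ positions, and $\{a+1,\dots,b-1\}$ in the last block of $b-a-1$ positions (with $\pi_s=a$ and $\pi_t=b$ forced); and every such permutation does contain $p$, because by construction all the forbidden regions are empty. Hence the number of permutations of length $n$ containing $p$ with lower point of value $a$ and higher point of value $b$ equals $(n-b)!\,(a-1)!\,(b-a-1)!$. Writing $j=b-a-1$ and $k=b-2$ (so $k-j=a-1$ and $n-2-k=n-b$) puts this in the form $j!\,(k-j)!\,(n-2-k)!$, with $j$ running over $0,\dots,k$ and $k$ over $0,\dots,n-2$, matching the statement.

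To pass from this to $|S_n(p)|$ by a single subtraction I must know that a permutation can contain $p$ in at most one way, so that the counts for different pairs $(a,b)$ count disjoint sets of permutations; establishing this uniqueness is the step I expect to require the most care. Suppose $\pi$ contained $p$ both through $s<t$ and through $s'<t'$, with $s<s'$ (the case $s=s'$ forces the two occurrences to coincide). From the first occurrence, positions $1,\dots,s-1$ of $\pi$ carry exactly the $s-1$ largest values, position $s$ carries $a$, positions $s+1,\dots,t-1$ carry $\{1,\dots,a-1\}$, position $t$ carries $b=n-s+1$, and positions $t+1,\dots,n$ carry $\{a+1,\dots,b-1\}$. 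If $s'=t$, the second occurrence would need $t'=s'+\pi_{s'}=t+b=n+1+a>n$, which is impossible. If $s<s'\le t$, then positions $1,\dots,s'-1$ would have to be exactly the $s'-1$ largest values, yet $b=n-s+1$, which lies among those top values since $s'\ge s+1$, sits at position $t\ge s'$ and so is absent from the first $s'-1$ positions — a contradiction. If $s'>t$, then the value $1$ occupies a position strictly less than $s'$, so it too would have to be among the $s'-1$ largest values, impossible. Hence the occurrence is unique, and therefore $|S_n(p)|=n!-\sum_{1\le a<b\le n}(n-b)!(a-1)!(b-a-1)!=n!-\sum_{k=0}^{n-2}\sum_{j=0}^{k}j!\,(k-j)!\,(n-2-k)!$; the small cases $n\le 2$ are checked directly.
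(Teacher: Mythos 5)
Your proof is correct. The paper actually omits the proof of this proposition, stating only that it is ``similar to the proof of Proposition~\ref{prop:18}'', and your argument is exactly the intended template: read off from the shading that a containing permutation splits into three forced blocks (values above $b$ before position $s$, values below $a$ between $s$ and $t$, values in $(a,b)$ after $t$), count the block arrangements as $(n-b)!\,(a-1)!\,(b-a-1)!$, and reindex to match the double sum. The one place where you go beyond the paper is the verification that a permutation contains $p$ in at most one way, which is precisely what justifies computing $|S_n(p)|$ by a single subtraction rather than inclusion--exclusion; your case analysis ($s'=t$, $s<s'\le t$, $s'>t$) is sound, and this is a welcome addition since the paper's template glosses over it. Note also that your derivation confirms that the factor $(n-k)!$ in the first display of the proposition is a typo for $(n-2-k)!$, consistent with the displayed sum.
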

% A MORE DETAILED PROOF
% A permutation can either contain exactly one occurrence of the pattern $p$ or none at all.

% To construct a permutation containing the pattern, the following conditions must be satisfied. First of all we find the letters corresponding to the two points in the pattern $p$, let us call them $a$ and $b$, respectively. Let $k$ be the height of the letter $a$ counted from above and $j$ be the distance between $a$ and $b$. Then, in a permutation of length $n$, we have $a =n-k$ and $b=n-k+j$.

% Second of all, the letters greater than $b$ must be to the left of $a$. These $n-(n-k+j)= k - j$ letters can be arranged in $(k-j)!$ different ways. In addition, the $n-k-1$ letters lower than $a$ must be placed between $a$ and $b$, which can be done in $(n-k-1)!$ different ways. At last, the $n-k+j - (n-k)-1=j -1$ letters between $a$ and $b$ in size must be placed to the right of $b$, which can be done in $(j -1)!$ ways. Hence, the number of permutations containing the pattern $p$ is
% \[
% \sum_{k=1}^{n-1} \sum_{j = 1 }^{k}(n-k-1)!(k-j)!(j - 1)!= \sum_{k=0}^{n-2} \sum_{j=0}^{k} j!(k-j)!(n-2-k)!.\qedhere
% \]

% Only Wilf-class in this subsection that comes from
% two subclasses

The proofs of the next two proposition follow similar arguments as the proof of
Proposition~\ref{prop:10} and are therefore omitted.
\begin{proposition}\label{prop:8}
A permutation $\pi$ avoids the pattern
$p = \pattern{scale = 0.5}{2}{1/1,2/2}{0/0,0/2,1/0,1/1,1/2}$ if and only if each
ascent in $\pi$ is the $23$ of a $123$ pattern or the $12$ of a $312$ pattern,
or both. If,
\[
\left| S_n\left(\pattern{scale = 0.6}{2}{1/1,2/2}{0/0,0/2,1/0,1/1,1/2}\right) \right| = a_n
\]
then $a_n= n\cdot a_{n-1}-a_{n-2}$ and $a_{-1}=0, a_{0}=1$.
\end{proposition}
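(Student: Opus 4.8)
The plan is to follow the template of the proof of Proposition~\ref{prop:10}.

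I would first pin down the occurrences of $p$. Since the middle column of boxes $\boks{1}{0},\boks{1}{1},\boks{1}{2}$ is entirely shaded, an occurrence of $p$ must use two positions that are adjacent in $\pi$; and since $\boks{0}{0}$ and $\boks{0}{2}$ are shaded while $\boks{0}{1}$ and the whole right column of boxes are not, an occurrence is exactly a pair of adjacent positions $i,i+1$ with $\pi(i)<\pi(i+1)$ such that every letter of $\pi$ lying before position $i$ has value strictly between $\pi(i)$ and $\pi(i+1)$. Thus a consecutive ascent fails to be an occurrence precisely when there is an earlier letter below $\pi(i)$ (the $1$ of a $123$ whose $23$ is the ascent) or above $\pi(i+1)$ (the $3$ of a $312$ whose $12$ is the ascent), which is the stated characterization; its routine verification I would leave to the reader, as with Proposition~\ref{prop:10}.

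For the enumeration put $A_n=S_n(p)$, $a_n=|A_n|$, $A_{n,k}=\{\pi\in A_n\sep \pi(n)=k\}$ and $a_{n,k}=|A_{n,k}|$. As in Proposition~\ref{prop:10}, for $k<n$ I would use the map $\varphi_k\colon A_{n,k}\to A_{n-1}$ that deletes the final letter $k$ and subtracts $1$ from every larger letter, with inverse that appends $k$ and adds $1$ to every letter $\ge k$. The content is that $\varphi_k$ is a well-defined bijection, and the key observation is that, since $k<n$, the letter $n$ of a permutation in $A_{n,k}$ sits strictly before the final position, so appending $k$ cannot create a new occurrence through the final ascent --- such an occurrence would require every earlier letter to be strictly between the penultimate letter and $k$, which the value $n$ forbids. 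Because relabelling the first $n-1$ entries by an order isomorphism preserves all other potential occurrences, $\varphi_k$ is a bijection and hence $a_{n,k}=a_{n-1}$ for every $k\ne n$; taking $k=1$ gives in particular $a_{n-1,1}=a_{n-2}$.

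It remains to analyse the case $\pi(n)=n$. Appending $n$ to $\sigma\in S_{n-1}$ produces exactly one new consecutive ascent, at the last two positions, and by the characterization it is an occurrence of $p$ iff every earlier letter lies strictly between $\sigma(n-1)$ and $n$, i.e.\ iff $\sigma(n-1)=1$. Hence appending $n$ is a bijection $A_{n-1}\setminus A_{n-1,1}\to A_{n,n}$, so $a_{n,n}=a_{n-1}-a_{n-1,1}=a_{n-1}-a_{n-2}$. Summing over the last letter,
\[
a_n=\sum_{k=1}^{n}a_{n,k}=(n-1)a_{n-1}+a_{n,n}=(n-1)a_{n-1}+a_{n-1}-a_{n-2}=na_{n-1}-a_{n-2},
\]
and together with $a_{-1}=0$ and $a_0=1$ this is the claimed recurrence. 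The only real difficulty is the bookkeeping in the previous two paragraphs: confirming that the deletion/insertion maps respect avoidance in both directions --- that re-indexing the first $n-1$ letters never converts an avoider into a container through an ascent among those positions, and that the appended letter conceals no new occurrence --- after which the recurrence, and hence the sequence $1,1,2,7,33,\dots$, follows immediately.
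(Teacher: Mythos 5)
Your proof is correct: the characterization of occurrences (adjacent positions with all earlier letters strictly between the two values) is right, the bijections $\varphi_k$ for $k\neq n$ and the identification of $A_{n,n}$ with $A_{n-1}\setminus A_{n-1,1}$ both check out, and they yield $a_n=(n-1)a_{n-1}+a_{n-1}-a_{n-2}=na_{n-1}-a_{n-2}$. This is exactly the route the paper intends, since it omits the proof of this proposition with the remark that it follows the same argument as Proposition~\ref{prop:10}, whose last-letter decomposition you have faithfully adapted.
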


\begin{proposition}\label{prop:9}
A permutation $\pi$ avoids the pattern
$p = \pattern{scale = 0.5}{2}{1/1,2/2}{0/0,0/1,1/0,1/1,1/2}$ if and only if each
ascent in $\pi$ is the $13$ of a $213$ pattern or the $23$ of a $123$ pattern,
or both. If
\[
\left| S_n\left(\pattern{scale = 0.6}{2}{1/1,2/2}{0/0,0/1,1/0,1/1,1/2}\right) \right| = a_n
\]
then $a_n= n\cdot a_{n-1}-a_{n-2}$ and $a_{-1}=0, a_{0}=1$.
\end{proposition}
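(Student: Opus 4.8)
The plan is to first decode the pattern $p$ and then imitate the last-letter recursion from the proof of Proposition~\ref{prop:10}. Since the whole middle column $\boks{1}{0},\boks{1}{1},\boks{1}{2}$ of $p$ is shaded, an occurrence of $p$ must use two \emph{adjacent} positions of the permutation; and since the two lower boxes $\boks{0}{0},\boks{0}{1}$ of the left column are shaded while $\boks{0}{2}$ is not, every letter lying to the left of the occurrence must be larger than the upper of its two letters. Hence $\pi$ contains $p$ if and only if there is an index $i$ with $\pi_i<\pi_{i+1}$ and $\pi_\ell>\pi_{i+1}$ for all $\ell<i$; equivalently, $\pi$ avoids $p$ if and only if every adjacent ascent $\pi_i\pi_{i+1}$ admits some $\ell<i$ with $\pi_\ell<\pi_{i+1}$. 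For such an $\ell$ either $\pi_\ell<\pi_i$, so $\pi_\ell\pi_i\pi_{i+1}$ is a $123$ whose ``$23$'' is $\pi_i\pi_{i+1}$, or $\pi_i<\pi_\ell<\pi_{i+1}$, so $\pi_\ell\pi_i\pi_{i+1}$ is a $213$ whose ``$13$'' is $\pi_i\pi_{i+1}$; conversely each of these configurations produces the required $\ell$. This is the stated characterization, and it already shows that every avoider of length $\geq 2$ begins with a descent.

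Now write $A_n$ for the set of length-$n$ avoiders of $p$ and $a_n=|A_n|$, and for $\sigma\in A_{n-1}$ let $\sigma^{+k}$ be the length-$n$ permutation obtained by appending the letter $k\in\{1,\dots,n\}$ to $\sigma$ after increasing by $1$ every letter of $\sigma$ that is $\geq k$. Deleting the last letter of a permutation in $A_n$ and renormalizing produces an element of $A_{n-1}$, since deletion can only destroy the ascent (if any) at positions $n-1,n$ while renormalization preserves the relative order of the first $n-1$ letters, so the defining condition on every remaining ascent is untouched; together with recording the removed letter this inverts the appending operation. Going the other way, for $\sigma\in A_{n-1}$ the permutation $\sigma^{+k}$ can only fail to avoid $p$ at its new adjacent pair $\pi_{n-1}\pi_n$, because every old ascent keeps its witness $\ell$. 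That new pair is a forbidden (``bad'') ascent exactly when $\pi_{n-1}<\pi_n=k$ and every letter before position $n-1$ exceeds $k$; but then $\pi_{n-1}$ is the \emph{only} letter among $\pi_1,\dots,\pi_{n-1}$ below $k$, which (as those are the values $1,\dots,n$ with $k$ removed) forces $k=2$ and $\pi_{n-1}=1$, i.e.\ $\sigma$ ends in the letter $1$. Consequently, for each $\sigma\in A_{n-1}$ precisely $n-[\sigma_{n-1}=1]$ of the values $k$ give $\sigma^{+k}\in A_n$, and this refines into a bijection, so
\[
a_n = n\,a_{n-1} - \#\{\sigma\in A_{n-1}\colon \sigma_{n-1}=1\}.
\]

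To finish, note that deleting the trailing $1$ from a permutation in $A_m$ that ends in $1$ and subtracting $1$ from every remaining letter is a bijection onto $A_{m-1}$: the deleted pair $\sigma_{m-1}\sigma_m$ is a descent, so erasing it neither creates nor destroys any ascent, and the relabeling is order-preserving, so $p$-avoidance transfers both ways. Hence $\#\{\sigma\in A_{n-1}\colon\sigma_{n-1}=1\}=a_{n-2}$ and the recurrence becomes $a_n=n\,a_{n-1}-a_{n-2}$ for $n\geq 2$; checking $a_0=a_1=1$ directly (and adopting $a_{-1}=0$) makes it valid from $n=1$ on. I expect the delicate point to be the last claim of the second paragraph — that $\sigma^{+k}$ fails only for $(k,\sigma_{n-1})=(2,1)$ — where one must be careful that renormalization never robs an old ascent of its witness and that ``badness'' of the new pair genuinely pins down both $k$ and $\sigma_{n-1}$. (Proposition~\ref{prop:8} goes through identically: there the letters left of an occurrence must lie strictly between its two letters, so the unique bad append is instead $k=n$ onto a $\sigma$ ending in $1$, and the same trailing-$1$ bijection again yields $a_n=n\,a_{n-1}-a_{n-2}$.)
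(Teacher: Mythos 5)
Your proof is correct and follows essentially the route the paper intends: the paper omits the proof of Proposition~\ref{prop:9}, saying only that it uses the same last-letter insertion/deletion recursion as Proposition~\ref{prop:10}, and that is exactly what you carry out (your direct trailing-$1$ bijection onto $A_{n-2}$ is a clean way to identify the subtracted term, where the paper's template instead tracks the count of avoiders with a prescribed last letter). Your decoding of the mesh, the characterization of avoidance via a witness $\ell<i$ with $\pi_\ell<\pi_{i+1}$, and the pinning down of the unique bad append $(k,\sigma_{n-1})=(2,1)$ are all sound.
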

% A MORE DETAILED PROOF
% This statement can be proved using similar arguments as in
% Proposition~\ref{prop:8}, except here $k \in \dbrac{1,n} \setminus \{n-1\}$.

Table~\ref{tab:notbivincwithseq2} collects together the results of this section.
\begin{table}[ht]
{
\renewcommand{\arraystretch}{1.6}
\begin{tabular}{c|c|c|c}
\multirow{2}{*}{Repr.\ $p$} & \multirow{2}{*}{$|S_n(p)|$} & size of & \multirow{2}{*}{OEIS seq.} \\[-1.75ex]
&& class & \\
\hline
\hline
$\pattern{scale = 0.6}{2}{1/1,2/2}{0/1,2/0,1/0,0/2}$ & $n[x^n]\left( 1 + \sum_{i=1}^{n} (i-1)!\cdot x^i \right)$ & $60$ & A141154 \\[0.5ex]
\hline
$\pattern{scale = 0.6}{2}{1/1,2/2}{0/1,1/2,0/0,2/0,1/0,0/2,2/1}$ & $n! - (n-1)! + [x^n] \frac{F(x)}{1 + x F(x)}$ & $8$ & \\[0.5ex]
\hline
$\pattern{scale = 0.6}{2}{1/1,2/2}{0/0,0/1,0/2,1/2,2/0,2/2}$ & $n! - \sum_{i=1}^{n-1} \frac{(n-1)!}{i}$ & $84$ & A121586 ($n \geq 2$)\\[0.5ex]
\hline
$\pattern{scale = 0.6}{2}{1/1,2/2}{0/1,0/2,1/1,1/2,2/0,2/2}$ & $n! - \sum_{i=0}^{n-2}i!(n-1-i)!$ & $16$ \\[0.5ex]
\hline
$\pattern{scale = 0.6}{2}{1/1,2/2}{0/0,0/1,0/2,1/1,1/2,2/0,2/1}$ & $n! - \sum_{k=1}^{n-1}(k-1)!(n-k-1)!$ & $24$ \\[0.5ex]
\hline
$\pattern{scale = 0.6}{2}{1/1,2/2}{0/1,1/2,0/0,2/0,2/2}$ & $n! - \sum_{i=1}^{n-1} \sum_{\ell =1 }^{i} (i - \ell )! (n-i-\ell)! \ell !$ & $4$ \\[0.5ex]
\hline
$\pattern{scale = 0.6}{2}{1/1,2/2}{0/1,1/2,0/0,2/0,2/2,1/1}$ & $n! -  \sum_{k=0}^{n-2} \sum_{j=0}^{k} j!(k-j)!(n-2-k)!$ & $4$ \\[0.5ex]
\hline
$\pattern{scale = 0.6}{2}{1/1,2/2}{0/0,0/2,1/0,1/1,1/2}$ & $a_n= n\cdot a_{n-1}-a_{n-2}$, $a_{-1}=0, a_{0}=1$ & $32$ & A058797 \\
\end{tabular}}
\caption{Wilf-classes not containing bivincular patterns, with a known counting
sequence}
\label{tab:notbivincwithseq2}
\end{table}

\subsection{Unenumerated Wilf-classes, not containing bivincular patterns}
Tables~\ref{tab:notbivincularwithoutseq:partI}
and~\ref{tab:notbivincularwithoutseq:partII} show the Wilf-subclasses that we
have not mentioned before and have a unique enumeration sequence up to and
including $S_9$. This shows that these subclasses cannot be merged with any
other Wilf-subclasses and are therefore Wilf-classes.
\begin{table}[ht]
{
\renewcommand{\arraystretch}{1.6}
\begin{tabular}{c|c|l|c}
\multirow{2}{*}{Nr.\ } & \multirow{2}{*}{Repr.\ $p$}  & \multirow{2}{*}{$|S_n(p)|$ for $n = 1, \dotsc, 9$}  & size of \\[-1.75ex]
&&&subclass\\
\hline
\hline
25 & $\pattern{scale=0.6}{2}{1/1,2/2}{2/2,0/0,1/1,0/2}$             & $1, 1, 3, 11, 55, 337, 2437, 20211, 188537$ & $4$\\[0.5ex] %/236
\hline
26 & $\pattern{scale=0.6}{2}{1/1,2/2}{0/0,1/1,2/2}$                 & $1, 1, 2, 7, 35, 218, 1598, 13398, 126157$ & $2$\\[0.5ex] %/109
\hline
27 & $\pattern{scale=0.6}{2}{1/1,2/2}{0/1,2/0,2/2,1/0,1/1,0/2}$     & $1, 1, 4, 18, 99, 631, 4592, 37675, 344809$ & $4$\\[0.5ex] %/438
\hline
28 & $\pattern{scale=0.6}{2}{1/1,2/2}{0/1,1/2,0/0,2/2,1/0,2/1}$     & $1, 1, 3, 16, 94, 613, 4507, 37203, 341817$ & $2$\\[0.5ex] %/390
\hline
29  & $\pattern{scale=0.6}{2}{1/1,2/2}{0/1,1/2,2/0}$                 & $1, 1, 2, 8, 41, 251, 1809, 14986, 139963$ & $4$\\[0.5ex] %/50
\hline
30 & $\pattern{scale=0.6}{2}{1/1,2/2}{0/1,1/2,2/0,1/0,1/1,2/1,0/2}$ & $1, 1, 5, 21, 109, 673, 4797, 38845, 352541$ & $2$\\[0.5ex] %/484
\hline
31 & $\pattern{scale=0.6}{2}{1/1,2/2}{2/0,0/0,0/2,1/1}$             & $1, 1, 3, 11, 56, 349, 2560, 21453, 201545$ & $4$\\[0.5ex] %/234
\hline
32 & $\pattern{scale=0.6}{2}{1/1,2/2}{0/1,1/2,1/0,0/0,2/1}$         & $1, 1, 3, 13, 70, 448, 3307, 27618, 257363$ & $4$\\[0.5ex] %/260
\hline
33 & $\pattern{scale=0.6}{2}{1/1,2/2}{0/1,1/2,2/0,1/0,0/2,2/1}$     & $1, 1, 5, 20, 106, 657, 4707, 38267, 348341$ & $2$\\[0.5ex] %/404
\hline
34 & $\pattern{scale=0.6}{2}{1/1,2/2}{0/1,1/2,0/0,2/2,1/0,1/1,2/1}$ & $1, 1, 4, 20, 107, 664, 4755, 38621, 351151$ & $2$\\[0.5ex] %/476
\hline
35 & $\pattern{scale=0.6}{2}{1/1,2/2}{2/0,0/2,1/1}$                 & $1, 1, 3, 11, 53, 315, 2217, 17990, 165057$ & $2$\\[0.5ex] %/123
\hline
36 & $\pattern{scale=0.6}{2}{1/1,2/2}{0/1,1/2,0/0,1/0,1/1,2/1}$     & $1, 1, 3, 14, 76, 480, 3491, 28792, 265708$ & $4$\\ %/392
\end{tabular}
}
\caption{Wilf-classes not containing bivincular patterns, without a known
counting sequence -- Part I.}
\label{tab:notbivincularwithoutseq:partI}
\end{table}

\begin{table}[ht]
{
\renewcommand{\arraystretch}{1.6}
\begin{tabular}{c|c|l|c}
\multirow{2}{*}{Nr.\ } & \multirow{2}{*}{Repr.\ $p$}  & \multirow{2}{*}{$|S_n(p)|$ for $n = 1, \dotsc, 9$}  & size of \\[-1.75ex]
&&&subclass\\
\hline
\hline
38 & $\pattern{scale=0.6}{2}{1/1,2/2}{0/1,1/2,1/0,0/2,2/1}$         & $1, 1, 4, 15, 80, 493, 3565, 29279, 269621$ & $4$\\[0.5ex] %/274
\hline
39  & $\pattern{scale=0.6}{2}{1/1,2/2}{0/1,1/0}$                     & $1, 1, 2, 5, 17, 71, 357, 2101, 14203$ & $8$\\[0.5ex] %/17
\hline
40 & $\pattern{scale=0.6}{2}{1/1,2/2}{0/1,1/2,1/0,2/1}$             & $1, 1, 3, 10, 50, 290, 2018, 16023, 143601$ & $2$\\[0.5ex] %/140
\hline
41 & $\pattern{scale=0.6}{2}{1/1,2/2}{0/1,2/0,0/2,1/1}$             & $1, 1, 3, 12, 61, 376, 2715, 22416, 207950$ & $12$\\[0.5ex] %/179
\hline
42 & $\pattern{scale=0.6}{2}{1/1,2/2}{2/2,2/0,0/0,0/2,1/1}$         & $1, 1, 4, 15, 80, 501, 3666, 30467, 283196$ & $2$\\[0.5ex] %/373
\hline
43  & $\pattern{scale=0.6}{2}{1/1,2/2}{0/1,1/0,2/2}$                 & $1, 1, 2, 7, 35, 217, 1586, 13287, 125237$ & $4$\\[0.5ex] %/72
\hline
44 & $\pattern{scale=0.6}{2}{1/1,2/2}{0/1,1/0,2/2,1/1,0/2}$         & $1, 1, 3, 13, 69, 437, 3209, 26751, 249329$ & $8$\\[0.5ex] %/325
\hline
45 & $\pattern{scale=0.6}{2}{1/1,2/2}{0/1,1/2,1/0,1/1,2/1,0/2}$     & $1, 1, 4, 16, 83, 512, 3671, 29983, 274757$ & $4$\\[0.5ex] %/408
\hline
46 & $\pattern{scale=0.6}{2}{1/1,2/2}{0/1,1/0,0/2,2/2}$             & $1, 1, 3, 12, 62, 387, 2819, 23409, 217949$ & $24$\\[0.5ex] %/180
\hline
47 & $\pattern{scale=0.6}{2}{1/1,2/2}{0/1,1/2,2/1,2/0}$             & $1, 1, 3, 12, 62, 385, 2789, 23040, 213566$ & $16$\\ %/138
\end{tabular}
}
\caption{Wilf-classes not containing bivincular patterns, without a known
counting sequence -- Part II. Only subclass $39$ is recognized by the OEIS, as
A101900.}
\label{tab:notbivincularwithoutseq:partII}
\end{table}

Although we are unable to provide a formula for the enumeration of these
classes, we do have a conjecture for one of them.
\begin{conjecture}
The following pattern is a representative pattern for Wilf-subclass $39$ in
Table~\ref{tab:notbivincularwithoutseq:partII}.
\[
p = \pattern{scale = 0.7}{2}{1/1,2/2}{0/1,1/0}
\]
The number of permutations of length $n$ avoiding the pattern is the same as the
absolute value of the $n$-th line in column $0$ of a triangular matrix given by
the formula
\[
T(n, k) = \sum_{j=0}^{n-k} T(n-k, j)\cdot T(j+k-1, k-1)\footnote{http://oeis.org/A101900}
\]
for $n\geq k >0$ with $T(0, 0)=1$ and $T(n, 0) = -\sum_{j=1}^{n} T(n, j)$ for
$n > 0$.
\end{conjecture}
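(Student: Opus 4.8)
The plan is to first turn avoidance of $p$ into a hands-on condition on entries, and then to match a recursive decomposition of the avoiders against the array $T$. Unwinding the two shaded boxes, a permutation $\pi$ \emph{contains} $p$ exactly when there is a pair $i<j$ with $\pi_i<\pi_j$ such that (a) no value strictly between $\pi_i$ and $\pi_j$ occurs among $\pi_1,\dots,\pi_{i-1}$, and (b) every value among $\pi_{i+1},\dots,\pi_{j-1}$ exceeds $\pi_i$; hence $\pi$ avoids $p$ iff every ascent is blocked, either by an earlier entry of intermediate value or by an intervening entry below $\pi_i$. First I would record the easy consequences: no avoider of length $\ge 2$ begins with $1$; if $\pi_1=n$ then $\pi$ avoids $p$ iff $\pi_2\cdots\pi_n$ does (giving $a_n=a_{n-1}+c_n$, where $a_n=\lvert S_n(p)\rvert$ and $c_n$ counts avoiders not starting with $n$); and $a_0=a_1=a_2=1$. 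This fixes the base data and suggests building an avoider from the position of $n$.

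On the arithmetic side, the bilinear recursion forces, by induction on $n$, that $T(n,k)=(-1)^{n-k}b_{n,k}$ with $b_{n,k}:=\lvert T(n,k)\rvert$ for $k\ge 1$, where $b_{n,k}=\sum_{j=0}^{n-k}b_{n-k,j}\,b_{j+k-1,k-1}$ and $b_{0,0}=1$; the rule $T(n,0)=-\sum_{j\ge 1}T(n,j)$ then rearranges to $T(n,0)=(-1)^{n}\sum_{j\ge 1}(-1)^{j+1}b_{n,j}$, so the conjecture is \emph{equivalent} to $a_n=\sum_{j\ge 1}(-1)^{j+1}b_{n,j}$. Two tasks remain: (i) give a combinatorial meaning to the whole family $b_{n,k}$ (my expectation is that $b_{n,k}$ counts permutations of length $n$ carrying a marked substructure of ``size $k$'', with the alternating sum over $k\ge 1$ sieving down to the $p$-avoiders), and (ii) prove the positive recursion $b_{n,k}=\sum_{j}b_{n-k,j}\,b_{j+k-1,k-1}$ combinatorially, by splitting a marked object into an \emph{independent} pair, one on $n-k$ letters carrying its own mark $j$ and one on $j+k-1$ letters. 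The shift $j+k-1$ in the second factor is the fingerprint of such a block decomposition: peel off a block of size $k$ (my first candidate being the entries before the letter $n$, truncated at a suitable left-to-right minimum), leaving an avoider of length $n-k$ with secondary parameter $j$, whose value-interleaving with the peeled block is counted by the $b_{j+k-1,k-1}$ factor.

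I expect step (ii) to be the main obstacle. First, conditions (a) and (b) both reach across any natural cut of $\pi$ — (a) looks to the left of a point, (b) looks between two points — so establishing that a decomposition produces two genuinely independent $p$-constrained pieces will need a carefully chosen cut and probably an intermediate inductive statement rather than a one-line bijection. Second, because $a_n$ appears as an alternating (not a plain) sum of the $b_{n,j}$, no purely positive block-decomposition can land on $a_n$ directly; the honest route is either a sign-reversing involution on marked avoiders cancelling all $k\ge 1$ terms against one another down to $k=0$, or, if that resists, a catalytic-variable argument: set $F(x,t)=\sum_{n,k}b_{n,k}x^{n}t^{k}$, extract a functional equation for $F$ from a generating-tree description of avoiders refined by the chosen statistic, and verify algebraically (kernel method) that it is solved by the generating function read off from the recursion for $T$. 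Either way, pinning down the statistic for which the block decomposition is exact — equivalently, for which the mesh constraints of $p$ decouple across the cut — is the crux.
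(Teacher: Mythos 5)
The statement you are addressing is a \emph{conjecture} in the paper: the authors explicitly state that they are unable to enumerate this class and only observe (by computing $|S_n(p)|$ up to $n=9$) that the counting sequence appears to match column $0$ of OEIS A101900. There is therefore no proof in the paper to compare against, and your proposal does not close the gap either: it is a research plan whose essential steps are all left open. Concretely, (i) you never produce a combinatorial interpretation of the unsigned entries $b_{n,k}=|T(n,k)|$ for $k\ge 1$ --- you only ``expect'' them to count permutations with a marked substructure of size $k$; (ii) the block decomposition that is supposed to realize the positive recursion $b_{n,k}=\sum_j b_{n-k,j}\,b_{j+k-1,k-1}$ is only a guess (``my first candidate being the entries before the letter $n$, truncated at a suitable left-to-right minimum''), and you yourself note that conditions (a) and (b) reach across any natural cut, so independence of the two pieces is precisely what is in doubt; (iii) the identity $a_n=\sum_{j\ge 1}(-1)^{j+1}b_{n,j}$, which after your (correct) sign analysis is equivalent to the conjecture, is never established --- the proposed sign-reversing involution and the kernel-method fallback are both named but not carried out. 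Without (i)--(iii) nothing has been proved.

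The parts you do nail down are sound and worth keeping: the unwinding of the two shaded boxes into conditions (a) and (b) on an ascent $\pi_i<\pi_j$ is correct for $p=(12,\{\boks{0}{1},\boks{1}{0}\})$; the observations that no avoider of length $\ge 2$ begins with $1$ and that prepending $n$ preserves avoidance are easy consequences; and the reduction of the conjecture to the positivity-and-alternating-sum statement about $b_{n,k}$ is a legitimate reformulation (modulo checking that the alternating sums are nonnegative, which you should state as a separate claim to be proved). But as it stands the proposal identifies where the difficulty lies rather than resolving it, so the statement remains exactly as open as the paper leaves it.
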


\subsection{The remaining subclasses}
In this section we will list all the remaining subclasses. As can be seen in
Table~\ref{tab:remainig} some of these subclasses have the same enumeration
sequence for $n\leq 9$. We believe those subclasses can be merged so that the
final number of Wilf-classes will be $46$.
\begin{table}[ht]
{
\renewcommand{\arraystretch}{1.6}
\begin{tabular}{c|c|l|c}
\multirow{2}{*}{Nr.\ } & \multirow{2}{*}{Repr.\ $p$}  & \multirow{2}{*}{$|S_n(p)|$ for $n = 1, \dotsc, 9$}  & size of \\[-1.75ex]
&&&subclass\\
\hline
\hline
48 & $\pattern{scale = 0.6}{2}{1/1,2/2}{0/1,1/2,0/0,2/1,2/2}$ & $1, 1, 2, 9, 54, 370, 2849, 24483, 232913$ & $8$\\ %/259
49 & $\pattern{scale = 0.6}{2}{1/1,2/2}{0/1,1/2,0/0,1/1,2/0}$ & $1, 1, 2, 9, 54, 370, 2849, 24483, 232913$ & $8$\\ %/268
50 & $\pattern{scale = 0.6}{2}{1/1,2/2}{0/1,1/2,0/0,1/1,2/2}$ & $1, 1, 2, 9, 54, 370, 2849, 24483, 232913$ & $4$\\[0.5ex] %/270
\hline
51 & $\pattern{scale = 0.6}{2}{1/1,2/2}{0/1,2/0,0/0,1/1,2/2}$ & $1, 1, 3, 12, 64, 412, 3074, 25946, 243996$ & $8$\\ %/309
52 & $\pattern{scale = 0.6}{2}{1/1,2/2}{0/1,2/0,2/2,0/2,1/1}$ & $1, 1, 3, 12, 64, 412, 3074, 25946, 243996$ & $8$\\[0.5ex] %/323
\hline
53 & $\pattern{scale = 0.6}{2}{1/1,2/2}{0/1,1/2,0/0,2/1}$ & $1, 1, 2, 8, 43, 277, 2070, 17567, 166648$ & $8$\\ %/131
54 & $\pattern{scale = 0.6}{2}{1/1,2/2}{0/1,0/0,1/1,2/2}$ & $1, 1, 2, 8, 43, 277, 2070, 17567, 166648$ & $8$\\[0.5ex] %/165
\hline
55 & $\pattern{scale = 0.6}{2}{1/1,2/2}{0/1,1/2,0/0,2/0,1/1,2/1}$ & $1, 1, 3, 15, 85, 549, 4043, 33559, 310429$ & $8$\\ %/389
56 & $\pattern{scale = 0.6}{2}{1/1,2/2}{0/1,1/2,0/0,2/2,1/1,2/1}$ & $1, 1, 3, 15, 85, 549, 4043, 33559, 310429$ & $8$\\[0.5ex] %/391
\hline
57 & $\pattern{scale = 0.6}{2}{1/1,2/2}{0/1,1/2,1/1,2/0}$ & $1, 1, 2, 8, 42, 265, 1956, 16482, 155739$ & $4$\\ %/148
58 & $\pattern{scale = 0.6}{2}{1/1,2/2}{0/1,1/0,1/1,2/2}$ & $1, 1, 2, 8, 42, 265, 1956, 16482, 155739$ & $4$\\[0.5ex] %/186
\hline
59 & $\pattern{scale = 0.6}{2}{1/1,2/2}{0/1,1/2,0/0,1/1,2/1}$ & $1, 1, 2, 9, 52, 341, 2540, 21360, 200536$ & $8$\\ %/261
60 & $\pattern{scale = 0.6}{2}{1/1,2/2}{0/1,0/0,1/1,2/1,2/2}$ & $1, 1, 2, 9, 52, 341, 2540, 21360, 200536$ & $4$\\[0.5ex] %/300
\hline
61 & $\pattern{scale = 0.6}{2}{1/1,2/2}{0/1,1/2,0/0,2/0}$ & $1, 1, 2, 8, 44, 290, 2204, 18930, 181120$ & $8$\\ %/133
62 & $\pattern{scale = 0.6}{2}{1/1,2/2}{0/1,1/0,0/0,2/2}$ & $1, 1, 2, 8, 44, 290, 2204, 18930, 181120$ & $4$\\[0.5ex] %/164
\hline
63 & $\pattern{scale = 0.6}{2}{1/1,2/2}{0/1,1/2,0/0,2/1,2/0}$ & $1, 1, 3, 13, 71, 461, 3447, 29093, 273343$ & $8$\\ %/258
64 & $\pattern{scale = 0.6}{2}{1/1,2/2}{0/1,1/2,2/0,0/2,1/1}$ & $1, 1, 3, 13, 71, 461, 3447, 29093, 273343$ & $4$\\ %/284
65 & $\pattern{scale = 0.6}{2}{1/1,2/2}{0/1,1/0,0/0,1/1,2/2}$ & $1, 1, 3, 13, 71, 461, 3447, 29093, 273343$ & $4$\\ %/311
\end{tabular}
}
\caption{Remaining subclasses}
\label{tab:remainig}
\end{table}
We have only been able to find formula for the enumeration of one subclass.

\begin{proposition}\label{prop:11}
The permutations avoiding the pattern
$p = \pattern{scale = 0.5}{2}{1/1,2/2}{0/1,1/2,2/0,0/2,1/1}$ are the connected
permutations, thus
\[
\left| S_n\left(\pattern{scale = 0.6}{2}{1/1,2/2}{0/1,1/2,2/0,0/2,1/1}\right) \right| = [x^n]\left(1-\frac{1}{ \sum_{n} n!x^n }\right).
\]
\end{proposition}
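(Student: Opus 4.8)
The plan is to reduce the proposition to the combinatorial claim that a permutation $\sigma$ avoids $p$ exactly when $\sigma$ is connected, equivalently that $\sigma$ contains $p$ exactly when $\sigma$ is \emph{decomposable}, i.e.\ there is an index $1\le m<n$ with $\{\sigma(1),\dots,\sigma(m)\}=\{1,\dots,m\}$. Granting this, the enumeration is immediate: writing $C(x)=\sum_{n\ge1}c_nx^n$ for the generating function of connected permutations, the unique decomposition of every permutation as a direct sum of connected blocks gives $\sum_{n\ge0}n!\,x^n=1/(1-C(x))$, so $C(x)=1-1/\sum_n n!\,x^n$ and $|S_n(p)|=c_n=[x^n]\bigl(1-1/\sum_n n!\,x^n\bigr)$.

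For the forward direction I would take an occurrence of $p$ at positions $i<j$ (so $\sigma(i)<\sigma(j)$ and the five shaded boxes are empty) and read off the constraints box by box. Boxes $\boks{0}{1}$ and $\boks{0}{2}$ together say that every entry in a position $<i$ lies below $\sigma(i)$; boxes $\boks{1}{1}$ and $\boks{1}{2}$ together say the same for every entry strictly between positions $i$ and $j$; and box $\boks{2}{0}$ says every entry in a position $>j$ lies above $\sigma(i)$. Hence positions $1,\dots,j-1$ carry $\sigma(i)$ (in position $i$) together with $j-2$ values below $\sigma(i)$, which forces $\sigma(i)\ge j-1$; while positions $j,j+1,\dots,n$ carry $n-j+1$ values all exceeding $\sigma(i)$, which forces $\sigma(i)\le j-1$. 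Therefore $\sigma(i)=j-1$ and $\{\sigma(1),\dots,\sigma(j-1)\}=\{1,\dots,j-1\}$, so $\sigma=\alpha\oplus\beta$ with $|\alpha|=j-1\ge1$ and $|\beta|=n-j+1\ge1$, i.e.\ $\sigma$ is decomposable. For the converse I would, given a cut $\{\sigma(1),\dots,\sigma(m)\}=\{1,\dots,m\}$ with $1\le m<n$, let $i$ be the position of the value $m$ and set $j=m+1$; then $i<j$, $\sigma(j)\ge m+1>m=\sigma(i)$, every position $<j$ other than $i$ carries a value $<\sigma(i)$, and every position $>j$ carries a value $>\sigma(i)$, so all five boxes are empty and $\sigma$ contains $p$.

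The main obstacle is the bookkeeping in the forward direction: one has to extract \emph{both} counting inequalities so that $\sigma(i)$ is pinned down exactly ($\sigma(i)\ge j-1$ from the $j-1$ values no larger than $\sigma(i)$ that must fit into positions $1,\dots,j-1$, and $\sigma(i)\le j-1$ from the $n-j+1$ values larger than $\sigma(i)$ that must fit into positions $j,\dots,n$), and one should check the edge case $j=n$, where box $\boks{2}{0}$ is vacuous but the single entry $\sigma(j)>\sigma(i)$ still supplies the needed inequality. Everything else is routine, and the generating-function identity above then finishes the proof.
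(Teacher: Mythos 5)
Your argument is correct and follows essentially the same route as the paper: both show that an occurrence of $p$ at positions $i<j$ forces $\{\sigma(1),\dots,\sigma(j-1)\}=\{1,\dots,j-1\}$ (the paper calls this an invariant set, you call it a decomposition point) and conversely build an occurrence from any such cut, then invoke the standard generating function for connected permutations. Your version is in fact slightly more explicit than the paper's, which leaves the two counting inequalities pinning down $\sigma(i)=j-1$ implicit and cites Aguiar--Sottile for the enumeration rather than deriving $C(x)=1-1/\sum_n n!\,x^n$ from the direct-sum decomposition as you do.
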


To be able to provide that proof we have to consider invariant sets and
connected permutations.
\begin{definition}
If a permutation sends letters $1,2,\ldots,j$ to $1,2,\ldots,j$, where $0<j<n$,
we say that $\{1,2,\ldots,j\}$ is an \emph{invariant set}. Permutations that do
not contain an invariant set are called \emph{connected} permutations.
\end{definition}
According to Aguiar and Sottile in~\cite{MR2103213} the number of permutations
of length $n$ with no global descents\footnote{A global descent in a permutation
$\pi$ of length $n$ is an index $k < n$ such that $\pi_i > \pi_j$ for all
$i \leq k$ and $j \geq i+1$.} is given by
\[
1-\frac{1}{ \sum_{n} n!x^n },
\]
which is the same as the number of connected permutations of length $n$.

\begin{proof}[Proof of Proposition~\ref{prop:11}]
We will show the contrapositive, i.e., that the pattern $p$ occurs in a
permutation $\pi$ if and only if $\pi$ has an invariant set.

In the pattern $p=\pattern{scale = 0.5}{2}{1/1,2/2}{0/1,1/2,2/0,0/2,1/1}$, let
us call the points in the pattern $v$ and $w$, respectively. If $p$ occurs in a
permutation $\pi$, then there is no letter to the left of $w$ that is greater
than $v$. Also, there can be no smaller letter than $v$ to the right of $w$.
Hence, if there exist letters $a_1,a_2,\ldots,a_k$ to the left of $w$, then
$a_1,a_2,\ldots,a_k$ are all smaller than $v$.

On one hand, if $\{a_1,a_2,\ldots,a_k\}=\varnothing$, then $v$ is both the
leftmost and the smallest letter in $\pi$. Then $\{v\}$ is an invariant set. On
the other hand, if $\{a_1,a_2,\ldots,a_k\}\not=\varnothing$ then the points
$a_1,a_2,\ldots,a_k$ are in the first $k$ positions in $\pi$ and hence
$\{a_1,a_2,\ldots,a_k,v\}$ is an invariant set.

If $\pi$ has an invariant set, we know that the lowest $k$ letters in the
permutation are in the first $k$ positions. Let us choose the highest letter in
the invariant set, and call it $v$. Then we choose $w$ in position $k+1$. Now,
$v$ and $w$ form the pattern $p$.
\end{proof}

\section{Open questions}
We end with three open questions:

\begin{enumerate}
  \item What is the actual number of Wilf-classes for mesh patterns of length $2$? As mentioned above
  it is at most $56$ but based on computer experiments we conjecture it to be $46$.
  \item Suppose $p$ and $q$ are mesh patterns such that $|S_n(p)| = |S_n(q)|$ for all $1 \leq n \leq N$;
  how large does $N$ have to be (as a function of the length of the patterns $p$ and $q$) to guarantee that
  $p$ and $q$ are Wilf-equivalent?
  \item Is there a stronger version of the Shading Lemma (Lemma \ref{lem:shading}) that explains more pattern
  coincidences, perhaps strong enough to give all the coincidences between the patterns in Table~\ref{tab:classical1}.
\end{enumerate}

\section*{Acknowledgments}
We thank Einar Steingrimsson for helpful criticism and comments. Furthermore, we
are grateful to Sergey Kitaev, Anders Claesson and Anna Ingolfsdottir for the
useful remarks, and David Callan for a helpful explanation on a particular
pattern. Finally Hjalti Magnusson for all the technical support with
typesetting.

\appendix

\section{Switch operations} \label{app:switch}
\begin{definition}
We define a set of operations called \emph{switch operations}. Each operation
takes a pattern $p$ and breaks it down into two parts by the largest letter $n$.
The first part contains the part of the pattern $p$ that appears before $n$. The
second part contains the part of $p$ that appears after $n$. Then one of the two
operations, $\id,r$, is used on each part, where $(\tau, R)^{\id} = (\tau, R)$.
After that theses parts can be switched, i.e., part two will then appear before
$n$ and part one after $n$. An operation will be denoted as $S_{a,b,d}$, Where
$a$ is the operation used on part one and $b$ the operation used on part two.
The letter $d$ is $1$ if part one and two are switched and $0$ otherwise. These
switch operations can also be used on permutations.
\end{definition}

\begin{example}
This example shows the effect of the switch operation $S_{r,r,1}$ on a mesh
pattern of length $6$.
\[
\pattern{scale = 1}{6}{1/5,2/2,3/4,4/6,5/1,6/3}{0/5,1/3,1/4,3/1,3/4,4/3,5/6,6/1,6/3}
\overset{S_{r,r,1}}{\longrightarrow}
\pattern{scale = 1}{6}{1/3,2/1,3/6,4/4,5/2,6/5}{0/1,0/3,1/6,2/3,3/1,3/4,5/3,5/4,6/5}
\]
\end{example}

\begin{observation}\label{obs:switch}
Let $p=(\tau, R)$ be a pattern of length $n$ where the top line is shaded, that
is $\{(0,n), (1,n), \ldots, (n,n)\}\subseteq R$. Then a permutation $\pi$ avoids
$p$ if and only if $\pi^{S_{a,b,d}}$ avoids $p^{S_{a,b,d}}$. That is, the switch
operations preserve Wilf-equivalence for this kind of pattern.
\end{observation}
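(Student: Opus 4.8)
The plan is to argue exactly as in the proof of Proposition~\ref{prop:ups}: produce a bijection of $S_n$ which carries occurrences of $p$ to occurrences of $p^{S_{a,b,d}}$ and whose inverse carries occurrences of $p^{S_{a,b,d}}$ back to occurrences of $p$, so that avoidance of one is equivalent to avoidance of the other. The hypothesis that the top line of $p$ is shaded is exactly what makes the splitting of a permutation at the letter $n$ well behaved. Indeed, if $\pi_{i_1},\dotsc,\pi_{i_n}$ is an occurrence of $p$ in $\pi$, the $n+1$ shaded boxes of the top row together cover, inside $\pi$, every position and every value strictly above $\max_j \pi_{i_j}$; since they must be empty, $\pi$ has no value exceeding $\max_j \pi_{i_j}$, so $\max_j \pi_{i_j} = n$ and the occurrence point carrying the letter $n$ of $\tau$ is the global maximum of $\pi$. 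Thus $\pi$ factors as $\pi = \pi'\,n\,\pi''$, where $\pi'$ is the prefix of $\pi$ before its maximum and $\pi''$ the suffix after it, with the remaining occurrence points lying in $\pi'$ and $\pi''$ respectively. The same applies verbatim to $p^{S_{a,b,d}}$, whose top line is again entirely shaded, because reversing a half or interchanging the two halves moves the boxes only horizontally and the top row spans all columns.

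Second, I would observe that $\pi \mapsto \pi^{S_{a,b,d}}$ is a bijection of $S_n$. Writing $\pi = \pi'\,n\,\pi''$ as above, the operation applies the prescribed operation (identity or reverse) to each of the factors $\pi'$ and $\pi''$ separately and, if $d=1$, then interchanges the two factors around the fixed letter $n$. Because the two factors are only rearranged in place, the output is again a permutation of $\{1,\dotsc,n\}$, and $S_{a,b,d}$ admits an explicit two-sided inverse of the same shape (with the same value of $d$, and with the two prescribed operations exchanged when $d=1$).

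Third, I would check that this bijection carries occurrences of $p$ to occurrences of $p^{S_{a,b,d}}$. Using the factorization above, an occurrence of $p$ splits as a sub-occurrence of the part of $p$ lying before $n$, sitting inside $\pi'$, together with a sub-occurrence of the part of $p$ lying after $n$, sitting inside $\pi''$, glued along the global maximum. No shaded box of $p$ other than the top-row boxes involves points on both sides of this maximum: horizontally the point $n$ occupies a single column, so every box lies entirely among the columns to its left or entirely among those to its right, while vertically the only boxes above the global maximum are the top-row ones, which are treated uniformly. Hence reversing one of the two factors is just the ordinary reverse symmetry applied inside that factor, the top-row shading is carried to top-row shading, and interchanging the two factors around $n$ merely swaps the before-$n$ and after-$n$ data --- which is precisely the definition of $p^{S_{a,b,d}}$. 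Running the inverse switch operation on an occurrence of $p^{S_{a,b,d}}$ in $\pi^{S_{a,b,d}}$ recovers an occurrence of $p$ in $\pi$ by the same reasoning, and since $\pi\mapsto\pi^{S_{a,b,d}}$ is a bijection of $S_n$, the permutation $\pi$ avoids $p$ if and only if $\pi^{S_{a,b,d}}$ avoids $p^{S_{a,b,d}}$.

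The main obstacle, and the only step that is not immediate, is the vertical re-standardization concealed in the phrase ``the part of $p$ lying before (or after) $n$''. Restricting $p$ to the columns on one side of the global maximum forces the value-indices of the surviving boxes to be relabelled so that they run over exactly the set of values used by that side of $\pi$, and one must verify that this relabelling is consistent with the corresponding restriction of $\pi$ and is preserved under the reverse and swap steps. This bookkeeping is entirely parallel to that underlying the ordinary reverse and inverse symmetries of mesh patterns, but it is what one has to spell out in order to make the ``locality'' assertion in the previous paragraph rigorous.
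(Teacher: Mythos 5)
The paper offers no proof of this observation at all --- it is simply asserted in the appendix, in the same terse style as Proposition~\ref{prop:ups} --- so there is no argument of the authors' to compare yours against line by line. Your overall strategy is the natural one and mirrors the proof of Proposition~\ref{prop:ups}: the top-row shading forces every occurrence of $p$ to place the pattern point $n$ on the global maximum of $\pi$, the switch operation is a bijection on permutations of each length, and it carries occurrences to occurrences. Those three pillars are all correct (modulo the harmless overloading of $n$ as both the pattern length and the length of $\pi$).

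The point I would push back on is your closing paragraph. The ``vertical re-standardization'' you flag as the main obstacle is an artifact of the framing in your third paragraph, where you split an occurrence into two standalone sub-occurrences of two standalone sub-patterns glued along the maximum. That decomposition genuinely is lossy: a box $\boks{x}{y}$ whose column lies among the part-one columns but whose row boundaries are values carried by part-two points is invisible to either standalone sub-pattern, so gluing two sub-occurrences does not by itself reconstruct the mesh condition, and patching this by re-standardizing values would be painful. The fix is not more bookkeeping but a different framing: the switch operation never changes any value --- neither in the pattern (in the paper's example the points keep their heights and the boxes keep their $y$-coordinates) nor in the permutation (each block is reversed as a word and the blocks are interchanged around the fixed letter $n$). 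It is purely a permutation of positions, hence of columns, and this column permutation acts in lockstep on the occurrence points of $\pi$, on the non-occurrence points, and on the boxes of $R$, while every row of the mesh and every value of $\pi$ stays put. Since the maximum occupies a single column, each box $\boks{x}{y}$ with $y<n$ has its region contained in the positions of a single block, so the region is carried bijectively onto the region of the image box and emptiness is preserved; the top-row boxes are permuted among themselves and remain fully shaded. Stated this way there is nothing left to re-standardize and your argument closes.
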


\begin{example}
Here is another example showing the effect of the switch operation $S_{r,\id,1}$
on a mesh pattern of length $6$.
\[
\pattern{scale = 1}{6}{1/3,2/1,3/6,4/2,5/4,6/5}{0/0,1/4,2/3,3/4,5/2,5/3,6/5,1/6,2/6,3/6,5/6,6/6,0/6,4/6}
\overset{S_{r,\id,1}}{\longrightarrow}
\pattern{scale = 1}{6}{1/5,2/4,3/2,4/6,5/3,6/1}{0/5,0/6,1/2,1/3,1/6,3/6,4/0,5/4,5/6,6/3,6/6,2/6,4/6}
\]
According to Observation~\ref{obs:switch} theses two patterns are Wilf-equivalent.
\end{example}

\bibliographystyle{amsplain}
\bibliography{biblio}

\providecommand{\bysame}{\leavevmode\hbox to3em{\hrulefill}\thinspace}
\providecommand{\MR}{\relax\ifhmode\unskip\space\fi MR }
% \MRhref is called by the amsart/book/proc definition of \MR.
\providecommand{\MRhref}[2]{%
  \href{http://www.ams.org/mathscinet-getitem?mr=#1}{#2}
}
\providecommand{\href}[2]{#2}
\begin{thebibliography}{10}

\bibitem{MR2103213}
Marcelo Aguiar and Frank Sottile, \emph{Structure of the
  {M}alvenuto-{R}eutenauer {H}opf algebra of permutations}, Adv. Math.
  \textbf{191} (2005), no.~2, 225--275.

\bibitem{MR1758852}
Eric Babson and Einar Steingr{\'{\i}}msson, \emph{Generalized permutation
  patterns and a classification of the {M}ahonian statistics}, S\'em. Lothar.
  Combin. \textbf{44} (2000), Art. B44b, 18 pp. (electronic).

\bibitem{MR2652101}
Mireille Bousquet-M{\'e}lou, Anders Claesson, Mark Dukes, and Sergey Kitaev,
  \emph{{$(2+2)$}-free posets, ascent sequences and pattern avoiding
  permutations}, J. Combin. Theory Ser. A \textbf{117} (2010), no.~7, 884--909.

\bibitem{MR2795782}
Petter Br{\"a}nd{\'e}n and Anders Claesson, \emph{Mesh patterns and the
  expansion of permutation statistics as sums of permutation patterns},
  Electron. J. Combin. \textbf{18} (2011), no.~2, Paper 5, 14.

\bibitem{MR1857258}
Anders Claesson, \emph{Generalized pattern avoidance}, European J. Combin.
  \textbf{22} (2001), no.~7, 961--971.

\bibitem{MR2502493}
Friedrich Hirzebruch, \emph{Eulerian polynomials}, M\"unster J. Math.
  \textbf{1} (2008), 9--14.

\bibitem{MR0378456}
Donald~E. Knuth, \emph{The art of computer programming}, second ed.,
  Addison-Wesley Publishing Co., Reading, Mass.-London-Amsterdam, 1975, Volume
  1: Fundamental algorithms, Addison-Wesley Series in Computer Science and
  Information Processing.

\bibitem{RB}
Robert Parviainen, \emph{Wilf classification of bi-vincular permutation
  patterns}, preprint (2009).

\bibitem{MR829358}
Rodica Simion and Frank~W. Schmidt, \emph{Restricted permutations}, European J.
  Combin. \textbf{6} (1985), no.~4, 383--406.

\bibitem{MR1442260}
Richard~P. Stanley, \emph{Enumerative combinatorics. {V}ol. 1}, Cambridge
  Studies in Advanced Mathematics, vol.~49, Cambridge University Press,
  Cambridge, 1997, With a foreword by Gian-Carlo Rota, Corrected reprint of the
  1986 original.

\bibitem{Tenner}
Bridget~Eileen Tenner, \emph{Mesh patterns with superfluous mesh}, Adv. Appl.
  Math \textbf{51} (2013), no.~5, 606 -- 618.

\bibitem{K}
Henning Ulfarsson, \emph{Equivalence classes of permutations avoiding a
  pattern}, preprint (2010).

\bibitem{W90}
Julian West, \emph{Permutations with forbidden subsequences and stack-sortable
  permutations}, Ph.D. thesis, MIT, 1990.

\bibitem{MR2422304}
Alexander Woo and Alexander Yong, \emph{Governing singularities of {S}chubert
  varieties}, J. Algebra \textbf{320} (2008), no.~2, 495--520.

\end{thebibliography}

\end{document}